\documentclass[11pt]{amsart}

\usepackage{amsmath}

\usepackage{amssymb}

\usepackage{amsfonts}

\usepackage{amsthm}

\usepackage{enumerate}

 \usepackage{esint}
 
 \usepackage{tikz}
 
 \usetikzlibrary{patterns}
 
 \usepackage{pgfplots}
 
  \usepackage{tikz}
 
 \usetikzlibrary{patterns}
 
 \usepackage{pgfplots}

 \usepackage{subcaption}
 \usepackage{graphicx}
 \usepackage{lipsum}

\usepackage{graphicx}

\usepackage{verbatim}

 \usepackage{subcaption}
 \usepackage{graphicx}
 \usepackage{lipsum}

\usepackage{graphicx}

\usepackage{verbatim}

\newcommand{\wt}{\widetilde}

\newcommand{\Norm}[1]{ \left\|  #1 \right\| }

\newcommand{\floor}[1]{\lfloor #1 \rfloor }

\newcommand{\Be}{\begin{equation}}

\newcommand{\Ee}{\end{equation}}

\newcommand{\Bm}{\begin{multline}}

\newcommand{\Em}{\end{multline}}

\newcommand{\Bea}{\begin{eqnarray}}

\newcommand{\Eea}{\end{eqnarray}}

\newcommand{\Beas}{\begin{eqnarray*}}

\newcommand{\Eeas}{\end{eqnarray*}}

\newcommand{\Benu}{\begin{enumerate}}

\newcommand{\Eenu}{\end{enumerate}}

\newcommand{\Bi}{\begin{itemize}}

\newcommand{\Ei}{\end{itemize}}

\def\intslash{\fint}

\def\intslash{\rlap{\kern  .32em $\mspace {.5mu}\backslash$ }\int}

\def\qsl{{\rlap{\kern  .32em $\mspace {.5mu}\backslash$ }\int_{Q_x}}}

\def\Norm#1{{ \left\|  #1 \right\| }}

\def\floor#1{{\lfloor #1 \rfloor }}

\def\emph#1{{\it #1 }}

\def\supp{{\text{\rm supp}}}



\def\cM{{\mathcal {M}}}






























 at 10 true pt

\def\Log{{\text{Log}}}

\def\be#1{\begin{equation}\label{ #1}}

\def\endeq{\end{equation}}

\def\endal{\end{align}}

\def\bas{\begin{align*}}

\def\eas{\end{align*}}

\def\bi{\begin{itemize}}

\def\ei{\end{itemize}}

\def\emph#1{{\it #1}}

\def\textbf#1{{\bf #1}}




\parskip = 12 pt

\theoremstyle{plain}

   \newtheorem{theorem}{Theorem}[section]

   \newtheorem{proposition}[theorem]{Proposition}

   \newtheorem{lemma}[theorem]{Lemma}

   \newtheorem{theorem*}{Theorem}

\theoremstyle{remark}

   \newtheorem{remark}[theorem]{Remark}

\theoremstyle{definition}

   \newtheorem{definition}[theorem]{Definition}

\numberwithin{equation}{section}

\begin{document}
\title[The lacunary spherical maximal operator]{Improved endpoint bounds for the lacunary spherical maximal operator}
\author[L. Cladek]{Laura Cladek}
\author[B. Krause]{Benjamin Krause}

\address{L. Cladek, Department of Mathematics\\ The University of British Columbia\\1984 Mathematics Road, Vancouver, BC V6T 1Z2}
\email{cladek@math.ubc.ca}

\address{B. Krause, Department of Mathematics\\ The University of British Columbia\\1984 Mathematics Road, Vancouver, BC V6T 1Z2}
\email{benkrause@math.ubc.ca}

\subjclass[2010]{42B15}

\maketitle 

\begin{abstract}
We prove new endpoint bounds for the lacunary spherical maximal operator and as a consequence obtain almost everywhere pointwise convergence of lacunary spherical means for functions locally in $L\log\log\log L(\log\log\log\log L)^{1+\epsilon}$ for any $\epsilon>0$.
 \end{abstract}

\section{Introduction}
For $k\in\mathbb{Z}$ define the spherical means of radius $2^k$ on functions $f$ on $\mathbb{R}^d$ by
\begin{align*}
\mathcal{A}_kf(x)=f\ast\sigma_k,
\end{align*}
where $\sigma_k$ denotes the ($L^1$-normalized) surface measure on the $(d-1)$-sphere of radius $2^k$ centered at the origin.  Define the lacunary spherical maximal operator $\mathcal{M}$ by 
\begin{align*}
\mathcal{M}f(x)=\sup_{k\in\mathbb{Z}}|\mathcal{A}_kf(x)|.
\end{align*}
For convience, we inductively define the notation
\begin{align*}
\Log(t):=\log_2(100+t),
\end{align*}
\begin{align*}
\Log^1(t):=\Log(t),
\end{align*}
\begin{align*}
\Log^n(t):=\Log(100+{\Log}^{n-1}(t)), \qquad n\in\mathbb{N}\setminus\{0, 1\}.
\end{align*}
It was shown by C. Calder\'{o}n (\cite{cal}) and Coifman and Weiss (\cite{cw}) that $\mathcal{M}$ extends to a bounded operator on $L^p(\mathbb{R}^d)$ for $p>1$, which implies almost everywhere pointwise convergence of lacunary spherical means for functions in $L^p(\mathbb{R}^d)$ for $p>1$. A new proof of this result was later given by Duoandikoetxea and Rubio de Francia in \cite{dr}. It has remained open, however, as to whether $\mathcal{M}$ is weak type $(1, 1)$, or equivalently, whether almost everywhere pointwise convergence of lacunary spherical means holds for functions in $L^1(\mathbb{R}^d)$. 
\newline
\indent
In \cite{christstein}, Christ and Stein showed using an extrapolation argument that $\mathcal{M}f\in L^{1, \infty}(\mathbb{R}^d)$ for functions $f$ on $\mathbb{R}^d$ supported in a cube $Q$ satisfying $f\in L\log L(Q)$. Christ also proved in \cite{christ1} that $\mathcal{M}$ maps the Hardy space $H^1(\mathbb{R}^d)$ to $L^{1, \infty}(\mathbb{R}^d)$. More recently, Seeger, Tao, and Wright showed in \cite{stw}, \cite{stw2} that $\mathcal{M}$ maps the space $L\,\Log^2 L$ to $L^{1, \infty}(\mathbb{R}^d)$. In this paper we prove that $\mathcal{M}$ maps all characteristic functions in $L\,\Log^3 L$ to $L^{1, \infty}(\mathbb{R}^d)$, and more generally maps the entire space $L\, \Log^3 L (\Log^4 L)^{1+\epsilon}$ to $L^{1, \infty}(\mathbb{R}^d)$ for every $\epsilon>0$, thus obtaining almost everywhere pointwise convergence of lacunary spherical means for functions locally in $L\,\Log^3 L (\Log^4 L)^{1+\epsilon}$. 

\begin{proposition}\label{mainpropo}
There exists a constant $C$ such that for all measurable $f=\chi_E$ and all $\alpha>0$ we have
\begin{align}\label{mainineq}
|\{x\in\mathbb{R}^d:\,\mathcal{M}f(x)>\alpha\}|\le C\alpha^{-1}\int|f(x)|{\Log}^3(|f(x)|/\alpha)\,dx.
\end{align}
\end{proposition}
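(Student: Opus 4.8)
The plan is to split off a Hardy--Littlewood error via a Littlewood--Paley decomposition of the surface measure, to reduce the remaining operator to a superposition of ``$2^{-j}$-thickened'' lacunary spherical maximal operators $\mathcal N^{(j)}$, and then to play an $L^2$-estimate for $\mathcal N^{(j)}$ carrying an exponential gain in $j$ against a sharp weak-type $(1,1)$ estimate --- valid on $\{0,1\}$-valued inputs --- for the operator obtained by retaining the first $N$ of these, this last estimate losing only a factor $\Log(\Log N)$ in $N$. Choosing $N\sim\Log(1/\alpha)$ to balance the two then produces exactly the constant $\Log^3(1/\alpha)$.

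First I would carry out the reductions. As $\|\mathcal M f\|_\infty\le\|f\|_\infty=1$, the left side of \eqref{mainineq} vanishes for $\alpha\ge1$, and for $\alpha$ bounded below it is controlled trivially by the $L^2$-boundedness of $\mathcal M$; so we may assume $\alpha$ small and must prove $|\{\mathcal M\chi_E>\alpha\}|\lesssim\alpha^{-1}|E|\,\Log^3(1/\alpha)$. Fix a Littlewood--Paley family $(\psi_j)_{j\ge0}$ with $\sum_j\psi_j=\delta_0$ and $\widehat{\psi_j}$ supported in $\{|\xi|\sim2^j\}$ for $j\ge1$ and in $\{|\xi|\lesssim1\}$ for $j=0$; put $\sigma^{(j)}=\psi_j*\sigma$, $\sigma^{(j)}_k(x)=2^{-dk}\sigma^{(j)}(2^{-k}x)$, $\mathcal M^{(j)}f=\sup_k|f*\sigma^{(j)}_k|$, so that $\mathcal Mf\le\sum_{j\ge0}\mathcal M^{(j)}f$. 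The term $j=0$ is pointwise $\lesssim Mf$ ($M$ the Hardy--Littlewood maximal operator), contributing $\lesssim\alpha^{-1}|E|$ to \eqref{mainineq}. For $j\ge1$ a stationary-phase expansion of $\widehat\sigma$ gives $\sigma^{(j)}=\mu^{(j)}+\rho^{(j)}$, where $\sum_{j\ge1}|\rho^{(j)}|$ is dominated by a single $L^1$-normalized radial bump (so $\sup_k\sum_j|f*\rho^{(j)}_k|\lesssim Mf$, again contributing $\lesssim\alpha^{-1}|E|$) and $\mu^{(j)}$ is supported in $\{\,||x|-1|\lesssim2^{-j}\,\}$ with $\|\mu^{(j)}\|_\infty\lesssim2^{j}$. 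Writing $\mathcal N^{(j)}f=\sup_k|f*\mu^{(j)}_k|$ and $\mathcal N^{\le N}f=\sup_k\bigl|f*\sum_{j=1}^{N}\mu^{(j)}_k\bigr|$, we have $\mathcal N^{(j)}\chi_E(x)\lesssim\sup_k|E\cap A^{(j)}_k(x)|\,/\,|A^{(j)}_k(x)|$ with $A^{(j)}_k(x)=\{\,y:||x-y|-2^k|\lesssim2^{k-j}\,\}$, and $\mathcal N^{\le N}$ is --- modulo the errors just isolated --- the lacunary spherical maximal operator over $2^{-N}$-neighborhoods of spheres. It remains to bound $|\{\sum_{j\ge1}\mathcal N^{(j)}\chi_E>\alpha/2\}|$.

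Two quantitative inputs drive the proof. \textbf{(i)} An $L^2$ bound with gain: $\|\mathcal N^{(j)}\chi_E\|_{2}\lesssim2^{-(d-1)j/2}|E|^{1/2}$, which I would obtain from the lacunary square-function inequality $\sup_k|g_k|^2\lesssim\bigl(\sum_k|g_k-g_{k-1}|^2\bigr)^{1/2}\bigl(\sum_k|g_k|^2\bigr)^{1/2}$ (valid since $g_k=\chi_E*\mu^{(j)}_k\to0$ as $k\to\pm\infty$), Plancherel, and the bound $|\widehat{\sigma^{(j)}}|\lesssim2^{-(d-1)j/2}$ on $\{|\xi|\sim2^j\}$; the key point is that, because of the unimodular oscillation $e^{i|\xi|}$ in $\widehat\sigma$, the discrete differences $\widehat{\mu^{(j)}_k}-\widehat{\mu^{(j)}_{k-1}}$ obey the \emph{same} sup-norm bound $\lesssim2^{-(d-1)j/2}$ as the individual terms (a naive mean-value estimate would instead lose a factor $2^{j}$ and be useless). \textbf{(ii)} The sharp endpoint bound for the truncated operator on characteristic functions:
\[
\bigl|\{\mathcal N^{\le N}\chi_E>\lambda\}\bigr|\ \lesssim\ \frac{|E|}{\lambda}\,\Log(\Log N),\qquad 0<\lambda<1 .
\]
This is the heart of the matter; its analogue in \cite{stw} carries $\Log N$ in place of $\Log(\Log N)$. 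I would approach it by decomposing each thin annulus $A^{(j)}_k$ into $\sim2^{(d-1)j/2}$ curved plates, exploiting the $L^1$-improving and almost-orthogonality properties of the single-plate averages together with the curvature-controlled bounded overlap of lacunary annuli, and feeding these through a bootstrap that converts a weak-type $(1,1)$ constant $B$ for $\mathcal N^{\le N}$ into one of size $\lesssim\Log(B)$; iterating the bootstrap a bounded number of times brings one into the doubly-logarithmic regime of (ii).

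Granting (i) and (ii), the conclusion is a routine synthesis. Given $\alpha$ small, set $N=\lceil 2(d-1)^{-1}\log_2(1/\alpha)\rceil$, so that $2^{-(d-1)N/2}\le\alpha$ and $N\sim\Log(1/\alpha)$; then, modulo the Hardy--Littlewood errors above, $\mathcal M\chi_E\le\mathcal N^{\le N}\chi_E+\sum_{j>N}\mathcal N^{(j)}\chi_E$. By (ii), $|\{\mathcal N^{\le N}\chi_E>\alpha/3\}|\lesssim\alpha^{-1}|E|\,\Log(\Log N)\lesssim\alpha^{-1}|E|\,\Log^3(1/\alpha)$, using $N\sim\Log(1/\alpha)$. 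For the tail, distribute the height $\alpha/3$ with weights $c_j\sim2^{-(d-1)j/4}$ (so $\sum_{j\ge1}c_j\le1$) and apply Chebyshev with (i):
\[
\Bigl|\bigl\{\textstyle\sum_{j>N}\mathcal N^{(j)}\chi_E>\alpha/3\bigr\}\Bigr|\ \le\ \sum_{j>N}\bigl|\{\mathcal N^{(j)}\chi_E>c_j\alpha/3\}\bigr|\ \lesssim\ \sum_{j>N}(c_j\alpha)^{-2}2^{-(d-1)j}|E|\ \lesssim\ \alpha^{-2}2^{-(d-1)N/2}|E|\ \le\ \alpha^{-1}|E| .
\]
Summing the three contributions yields \eqref{mainineq}. \textbf{The main obstacle} is step (ii): everything else is a standard orchestration of an $L^2$ estimate with a covering argument. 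Step (ii) also explains why the exponent is exactly $3$ --- the bootstrap cannot be run one further time without new input --- which is the reason the companion $L\,\Log^3L(\Log^4L)^{1+\epsilon}\to L^{1,\infty}$ statement is burdened with the extra factor $(\Log^4L)^{1+\epsilon}$.
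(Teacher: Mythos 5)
There is a genuine gap, and it sits exactly where you locate ``the heart of the matter.'' Your outer architecture --- the Littlewood--Paley splitting $\sigma=\sum_j\mu^{(j)}+\text{(error)}$, the $L^2$ estimate with gain $2^{-(d-1)j/2}$ for $\mathcal N^{(j)}$, and the balancing $N\sim\Log(1/\alpha)$ with weights $c_j$ --- is sound and standard (it is essentially the Duoandikoetxea--Rubio de Francia scheme). But with $N\sim\Log(1/\alpha)$ your input (ii), namely $|\{\mathcal N^{\le N}\chi_E>\lambda\}|\lesssim\lambda^{-1}|E|\,\Log(\Log N)$, is not a lemma feeding into the theorem: it \emph{is} the theorem, up to the harmless errors you have already split off. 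The only justification you offer is ``a bootstrap that converts a weak-type $(1,1)$ constant $B$ for $\mathcal N^{\le N}$ into one of size $\lesssim\Log(B)$, iterated a bounded number of times.'' No such self-improvement mechanism is exhibited, and none is known; indeed, if a constant-to-$\Log(\text{constant})$ bootstrap existed and could be iterated even twice unconditionally, nothing in your write-up explains why it terminates at the doubly-logarithmic stage rather than continuing to $O(1)$ and resolving the weak $(1,1)$ conjecture outright. The assertion that ``the bootstrap cannot be run one further time without new input'' is a description of the answer, not an argument. So the proposal reduces the proposition to an unproved claim of equivalent strength.

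For comparison, the paper does not pass through a frequency decomposition of $\sigma$ at all. It works on the physical side: after Calder\'on--Zygmund reductions it decomposes each $f_q$ by ``critical density'' $\gamma$ (Lemma \ref{decomplemma}), introduces $O(\Log^2(\alpha^{-1}))$ iterated cap scales $c_0<c_1<\cdots<c_N$ determined by the geometry of the sphere, removes exceptional sets built from ``heavy'' $c_i\times\cdots\times c_i\times c_{i-1}$ hyperrectangles convolved with the corresponding caps (Lemma \ref{sizelemma}), and then splits $\sigma_k\ast f_q^\gamma$ by ``height.'' The decisive gain --- the passage from $\Log^2$ to $\Log^3$ --- comes from summing the triple-logarithmically many intermediate heights in $L^1$ (Lemma \ref{1lem}) and showing that the remaining light part has $L^2$ norm improved by the factor $(\Log^2(\alpha^{-1}))^{-2}$ via the domination of $\sigma_k\ast\sigma_k$ by hyperrectangle sums. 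Some concrete substitute for that mechanism is what your step (ii) would need; as written, it is missing.
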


\begin{proposition}\label{mainpropo2}
For every $\epsilon>0$, there exists a constant $C(\epsilon)$ such that for all measurable $f$ and all $\alpha>0$ we have
\begin{multline}\label{mainineq}
|\{x\in\mathbb{R}^d:\,\mathcal{M}f(x)>\alpha\}|
\\
\le C(\epsilon)\alpha^{-1}\int|f(x)|\Log^3(|f(x)|/\alpha)(\Log^4(|f(x)|/\alpha))^{1+\epsilon}\,dx.
\end{multline}
\end{proposition}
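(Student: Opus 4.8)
The strategy is a standard bootstrapping: derive the general-function estimate of Proposition~\ref{mainpropo2} from the characteristic-function estimate of Proposition~\ref{mainpropo} by a Calder\'on--Zygmund-type layer-cake decomposition at dyadic heights of $|f|/\alpha$. Fix $\alpha>0$ and, normalizing, assume $\alpha=1$; write $f=\sum_{j\ge 0} f_j$ where $f_j = f\chi_{E_j}$ with $E_j=\{x:\,2^{j}\le |f(x)| < 2^{j+1}\}$ together with the piece $f\chi_{\{|f|<1\}}$. The small piece contributes only to the region where $\mathcal{M}(f\chi_{\{|f|<1\}})>1/2$, which by the $L^2$ boundedness of $\mathcal{M}$ (or even the trivial $L^\infty$ bound after one more splitting) is controlled by $\int |f|^2\chi_{\{|f|<1\}}\le \int|f|$, absorbed into the right-hand side. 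So the heart of the matter is the sum over $j\ge 0$.

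For each $j$, further split $f_j$ by its level set $E_j$ into a sum of characteristic functions: since $2^j\le |f_j|<2^{j+1}$ on $E_j$, we have $\mathcal{M}f_j \le 2^{j+1}\,\mathcal{M}\chi_{E_j}$, and Proposition~\ref{mainpropo} applied to $\chi_{E_j}$ at height $\beta_j := c\,2^{-j}$ (for a small absolute $c$, chosen so that the pieces sum geometrically) gives
\begin{align*}
|\{\mathcal{M}f_j > \beta_j 2^{j+1}\}| \le C\,\beta_j^{-1}\int \chi_{E_j}\,{\Log}^3(1/\beta_j)\,dx \ls 2^{j}\,|E_j|\,{\Log}^3(2^j) \ls \int_{E_j} |f|\,{\Log}^3(|f|)\,dx.
\end{align*}
One then needs $\sum_j \mathcal{M}f_j$ to exceed $1$ on the set we are measuring; choosing $\beta_j 2^{j+1}$ to be a summable sequence with sum $\le 1$ — e.g. $\beta_j 2^{j+1}\sim 2^{-j}$ — forces an extra factor on the height that is logarithmic in $j$, i.e. one loses a factor of roughly $({\Log}^4(|f|))^{1+\epsilon}$ when passing from the sum of the $|E_j|{\Log}^3(2^j)$ to a single geometric-type sum; this is precisely the source of the $(\Log^4 L)^{1+\epsilon}$ gain in the statement versus the clean $\Log^3 L$ for a single characteristic function. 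Concretely, one takes the thresholds $\lambda_j$ with $\sum_j\lambda_j\le 1$ of the shape $\lambda_j \sim j^{-1}(\log j)^{-1-\epsilon}$ restricted to the dyadic range $j\asymp \log(1/\beta)$, so that $\lambda_j^{-1}\sim {\Log}^3(\cdot){\Log}^4(\cdot)^{1+\epsilon}$-type weights appear, and then
\begin{align*}
|\{\mathcal{M}f>1\}| \le \sum_{j\ge 0} |\{\mathcal{M}f_j > \lambda_j\}| \ls \sum_{j\ge 0}\lambda_j^{-1} 2^j |E_j|\,{\Log}^3(2^j) \ls C(\epsilon)\int |f|\,{\Log}^3(|f|)\,({\Log}^4(|f|))^{1+\epsilon}\,dx,
\end{align*}
where in the last step one uses that on $E_j$ one has ${\Log}^3(|f|)\asymp {\Log}^3(2^j)$ and ${\Log}^4(|f|)\asymp \log j$, so $\lambda_j^{-1}\,{\Log}^3(2^j)\ls {\Log}^3(|f|)({\Log}^4(|f|))^{1+\epsilon}$ pointwise on $E_j$, and the convergence of $\sum_j \lambda_j$ supplies the constant $C(\epsilon)$.

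The main obstacle is bookkeeping rather than a new idea: one must choose the height-splitting sequence $\{\lambda_j\}$ so that simultaneously (i) $\sum_j\lambda_j \le 1$ so that $\mathcal{M}f>1$ implies some $\mathcal{M}f_j>\lambda_j$, (ii) the resulting weight $\lambda_j^{-1}{\Log}^3(2^j)$ is dominated, on $E_j$, by ${\Log}^3(|f|/\alpha)({\Log}^4(|f|/\alpha))^{1+\epsilon}$, and (iii) the argument is uniform in $\alpha$, which is handled by replacing $|f|$ with $|f|/\alpha$ throughout and noting that $\Log^n$ only ever sees this ratio. A technical point to check carefully is the low piece $|f|<\alpha$ and the crossover region $|f|\approx\alpha$, where the iterated logarithms ${\Log}^n(|f|/\alpha)$ are $\asymp 1$ and must not degenerate; here the $L^2$ (or $L^p$, $p>1$) bound of Calder\'on--Coifman--Weiss, already invoked in the excerpt, cleanly disposes of that range. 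Once these choices are pinned down, inequality~\eqref{mainineq} of Proposition~\ref{mainpropo2} follows by summing the per-level estimates.
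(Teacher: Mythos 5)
There is a genuine gap, and it is quantitative rather than cosmetic: the layer-cake decomposition at dyadic heights of $|f|/\alpha$ cannot produce the stated bound. With $E_j=\{2^j\le |f|/\alpha<2^{j+1}\}$ there are $\sim\Log(\alpha^{-1})$ relevant values of $j$, and the requirement $\sum_j\lambda_j\le\alpha$ forces $\lambda_j^{-1}\gtrsim \alpha^{-1}j(\log j)^{1+\epsilon}$ for a typical $j$. On $E_j$ one has $\Log^1(|f|/\alpha)\approx j$, $\Log^2(|f|/\alpha)\approx\log j$, $\Log^3(|f|/\alpha)\approx\log\log j$, and $\Log^4(|f|/\alpha)\approx\log\log\log j$ --- your claim that $\Log^4(|f|/\alpha)\asymp\log j$ miscounts the iterated logarithms by two levels. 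Consequently the weight your argument actually produces on $E_j$ is $j(\log j)^{1+\epsilon}\,\Log^3(2^j)\approx \Log^1(|f|/\alpha)(\Log^2(|f|/\alpha))^{1+\epsilon}\Log^3(|f|/\alpha)$, which yields at best an $L\,\Log L\,(\Log^2 L)^{1+\epsilon}$-type conclusion (no better than the Christ--Stein extrapolation bound cited in the introduction) and is far from $L\,\Log^3 L\,(\Log^4 L)^{1+\epsilon}$. This is the familiar obstruction to upgrading a restricted weak-type estimate to a full weak-type estimate by summing over level sets: weak-type bounds do not add without a loss proportional to the number of pieces.

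The paper avoids this in two ways, both of which your proposal is missing. First, it decomposes $f=\sum_i f_i$ at dyadic levels of $\Log^3(|f|/\alpha)$ rather than of $|f|/\alpha$, so there are only $N=O(\Log^4(\alpha^{-1}))$ pieces; the price is that each $f_i$ is no longer comparable to a characteristic function, so Proposition \ref{mainpropo} does not apply to it --- this is exactly why the paper proves the stronger Proposition \ref{midprop}, valid whenever $\Log^3(|f|/\alpha)$ is essentially constant on $\supp f$. Second, even with only $O(\Log^4(\alpha^{-1}))$ pieces, the paper does not sum weak-type bounds: it reopens the proof of Proposition \ref{midprop}, writes $\sigma_k\ast f_{q_i}$ as an $L^2$-good part, an $L^1$-summable part, and a part supported in an exceptional set of total measure $\lesssim\alpha^{-1}\Norm{f}_1$, and then combines the pieces by Cauchy--Schwarz with weights $i^{1+\epsilon}$ in the square function (using $\sum_i i^{-(1+\epsilon)}<\infty$), which is what produces the factor $(\Log^4(|f|/\alpha))^{1+\epsilon}\approx i^{1+\epsilon}$ on the $i$-th piece. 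If you want to keep a black-box flavor, you would at minimum need to (a) switch to the $\Log^3$-level decomposition, (b) invoke Proposition \ref{midprop} rather than Proposition \ref{mainpropo}, and (c) check that replacing the height $\alpha$ by $\lambda_i\approx c_\epsilon i^{-(1+\epsilon)}\alpha$ inside $\Log^3(\cdot/\lambda_i)$ changes nothing, since $\Log^3$ is insensitive to factors of size $(\Log^4(\alpha^{-1}))^{1+\epsilon}$. As written, the proof does not establish the proposition.
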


We obtain as a corollary the following theorem.
\begin{theorem}
Let $\epsilon>0$, and let $f$ be locally in $L\,\Log^3 L\,(\Log^4 L)^{1+\epsilon}$. Then 
\begin{align*}
\mathcal{A}_kf(x)\to f(x)
\end{align*}
for almost every $x\in\mathbb{R}^d$.
\end{theorem}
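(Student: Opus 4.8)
The plan is to deduce the almost-everywhere convergence theorem from the weak-type bound in Proposition \ref{mainpropo2} by the standard density argument, so the only real work is organizing the approximation correctly for the Orlicz scale involved. First I would localize: since the statement is about $\mathcal{A}_k f(x)\to f(x)$ for a.e.\ $x$, and convergence is a local property, I may assume $f$ is supported on a fixed cube $Q$; on such a cube the hypothesis ``locally in $L\,\Log^3 L\,(\Log^4 L)^{1+\epsilon}$'' means the modular $\int_Q |f|\,\Log^3(|f|)(\Log^4(|f|))^{1+\epsilon}\,dx$ is finite. Next I would record the Banach-principle style reduction: it suffices to show that for every $\lambda>0$,
\begin{align*}
\Bigl|\Bigl\{x: \limsup_{k\to\infty}|\mathcal{A}_k f(x)-f(x)| + \limsup_{k\to-\infty}|\mathcal{A}_k f(x)-f(x)| > \lambda\Bigr\}\Bigr| = 0.
\end{align*}
(One handles $k\to+\infty$ and $k\to-\infty$ separately; for $k\to -\infty$ this is immediate from the Lebesgue differentiation theorem, since $\sigma_k$ concentrates at the origin, so the genuine issue is $k\to+\infty$, though in fact the maximal operator $\mathcal{M}$ already dominates both and the oscillation bound is uniform.)

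The heart of the argument is the decomposition $f = g + b$, where $g$ is a ``good'' function for which convergence is known, and $b$ is small in the relevant modular. For the good class I would take $g$ bounded (or continuous with compact support); for such $g$, $\mathcal{A}_k g \to g$ everywhere (uniformly, in fact, by continuity of translation and the normalization $\int\sigma_k = 1$), or at worst a.e.\ by the $L^2$ boundedness of $\mathcal{M}$ combined with density of nice functions in $L^2$. Then the oscillation $\Omega f(x) := \limsup_{k}|\mathcal{A}_k f(x) - f(x)|$ satisfies $\Omega f \le \Omega b \le \mathcal{M} b + |b|$ pointwise, so
\begin{align*}
|\{x: \Omega f(x) > \lambda\}| \le |\{x: \mathcal{M} b(x) > \lambda/2\}| + |\{x: |b(x)| > \lambda/2\}|.
\end{align*}
The second term is controlled trivially (Chebyshev) by $\|b\|_{L^1}$, and the first term is exactly where Proposition \ref{mainpropo2} enters, giving
\begin{align*}
|\{x: \mathcal{M} b(x) > \lambda/2\}| \le C(\epsilon)\,\lambda^{-1}\int |b|\,\Log^3(2|b|/\lambda)(\Log^4(2|b|/\lambda))^{1+\epsilon}\,dx.
\end{align*}
So it remains to choose $b$ making this right-hand side arbitrarily small.

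The approximation step is the one requiring a little care, and I expect it to be the main (though still routine) obstacle: I need, for any $\eta>0$, a splitting $f=g+b$ with $g$ bounded and $\int |b|\,\Log^3(|b|/\lambda)(\Log^4(|b|/\lambda))^{1+\epsilon}\,dx < \eta$. The natural choice is the truncation $b = f\cdot\chi_{\{|f|>N\}}$ (so $g = f\cdot\chi_{\{|f|\le N\}}$, which is bounded by $N$). Then by the dominated convergence theorem applied to the integrable majorant $|f|\,\Log^3(|f|/\lambda)(\Log^4(|f|/\lambda))^{1+\epsilon}$ — integrable precisely because $f\in L\,\Log^3 L\,(\Log^4 L)^{1+\epsilon}$ locally and the $\Log$ functions grow slower than any power — the tail $\int_{\{|f|>N\}}|f|\,\Log^3(|f|/\lambda)(\Log^4(|f|/\lambda))^{1+\epsilon}\,dx \to 0$ as $N\to\infty$; note $\Log^j(|b|/\lambda)\le \Log^j(|f|/\lambda)$ on the support of $b$, so the modular of $b$ is bounded by this tail. (If one prefers the modular with respect to $|b|$ itself rather than $|f|$, the inequality still goes the right way since $|b|\le|f|$ there.) Picking $N=N(\eta)$ large and then letting $\lambda$ be fixed, we get $|\{x:\Omega f(x) > \lambda\}| \le C(\epsilon)\lambda^{-1}\eta + 2\lambda^{-1}\eta$, and since $\eta>0$ was arbitrary this measure is $0$; taking $\lambda = 1/m$ and a countable union over $m$ finishes the proof. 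The only genuinely substantive input beyond soft measure theory is Proposition \ref{mainpropo2}, which is used precisely once in the form displayed above.
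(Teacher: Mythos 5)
Your proof is correct and is precisely the standard density argument the paper has in mind; the paper states the theorem as an immediate corollary of Proposition \ref{mainpropo2} without writing out the deduction, and your write-up supplies exactly that deduction (truncation $b=f\chi_{\{|f|>N\}}$, oscillation bounded by $\mathcal{M}b+|b|$, modular of $b$ small by dominated convergence). One slip in a parenthetical: the relevant limit is $k\to-\infty$ (radius $2^k\to 0$), and convergence there is \emph{not} ``immediate from the Lebesgue differentiation theorem'' since $\sigma_k$ is a singular measure rather than a ball average — that is the whole point of the subject — but your actual argument never uses that remark, so nothing is lost.
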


\indent
Before we proceed with the proofs, we briefly outline the argument. In \cite{stw}, the restricted version of the argument relied crucially on a decomposition of the function $f=\chi_E$ on Whitney cubes into characteristic functions of sets called ``generalized boxes", which had properties called ``length" and ``thickness." As the name suggests, in two dimensions such sets are a generalization of rectangular boxes, for which the length and thickness correspond to the long and short sides respectively of the rectangle. In the case of two dimensions, convolution of a rectangular box with the measure $\sigma_k$ has measure equal to $2^k$ times the length of the box. Similarly, the length of a generalized box determines for how many scales $k$ one may throw away the support of $\sigma_k$ convolved with the characteristic function of the generalized box. Conversely, the thickness of the box determines what $L^2$ estimates one may obtain for $\sigma_k$ convolved with the characteristic function of the generalized box. 
\newline
\indent
The argument of \cite{stw} proceeded by combining standard Calderon-Zygmund techniques along with this decomposition of $E$ into generalized boxes on Whitney cubes, and by leveraging $L^2$ and exceptional set size estimates via the properties of length and thickness for each generalized box. Our argument will also make use of a similar decomposition, but there will also be many new ingredients involved, and in general our argument will more closely use the geometry of the sphere.
\newline
\indent 
For example, we exploit the hyperrectangular cap structure of the spherical measures to introduce a fairly involved algorithm for defining exceptional sets, and we throw away more exceptional sets than in \cite{stw}. These exceptional sets are defined by covering $\mathbb{R}^d$ with collections of rotated hyperrectangular grids, where the dimensions of the hyperrectangles are determined by an iterative relationship between the dimensions of a given generalized box and the cap structure of the spherical measure. On fixing a particular direction in $S^{d-1}$ which determines the orientation of the hyperrectangular grids to be considered, we then subdivide the generalized box into hyperrectangular pieces where the generalized box has sufficiently high ``mass", and throw away as an exceptional set the sumset of this hyperrectangular box and a piece of the cap on the sphere with normals pointing in similar directions as the short side of the hyperrectangular box, so that such a set is is contained in a translation of the fattening of the spherical cap by an amount comparable to the short side of the hyperrectangular box. 
\newline
\indent
We then decompose the kernel $\sigma_k\ast\sigma_k$ into linear combinations of characteristic functions of hyperrectangles with dimensions corresponding to the caps that appear in our algorithm for defining exceptional sets. The $L^2$ estimates for each such piece of the kernel convolved with a given hyperrectangular piece in a grid with similar orientation is determined by the mass of the generalized box on that hyperrectangular piece. There are essentially double-logarithmically (in the relevant parameter) many such different sizes of caps that appear, which alone would lead to the desired $L^2$ estimates with an additional double-logarithmic factor. However, we are able to throw away triple-logarithmically many ``intermediate scales", that is, we may sum in $L^1$ the convolutions of characteristic functions of parts of the generalized boxes with intermediate masses with the associated cap measures. After doing so, we improve the $L^2$ estimates for the remaining ``light scales" by the needed double-logarithmic factor, and also improve the support size estimates for the remaining ``heavy scales" by a double-logarithmic factor.

\section{Preliminary Reductions}
Most of this paper will be devoted to the proof of Proposition \ref{mainpropo}. We will in fact prove the following slightly stronger version of Proposition \ref{mainpropo}, which will be useful in the proof of Proposition \ref{mainpropo2}.

\begin{proposition}\label{midprop}
There exists a constant $C$ such that for all $\alpha>0$ and all measurable $f$ such that $\Log^3(|f(x)|/\alpha)$ is essentially constant on the support of $f$ we have
\begin{align*}
|\{x\in\mathbb{R}^d:\,\mathcal{M}f(x)>\alpha\}|\le C\alpha^{-1}\int|f(x)|\Log^3(|f(x)|/\alpha)\,dx.
\end{align*}
\end{proposition}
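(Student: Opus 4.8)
The plan is to reduce Proposition \ref{midprop} to the restricted-type bound of Proposition \ref{mainpropo} (the case $f = \chi_E$) by a standard layer-cake decomposition, exploiting the hypothesis that $\Log^3(|f(x)|/\alpha)$ is essentially constant on $\operatorname{supp} f$. First I would fix $\alpha > 0$ and, by normalizing (replacing $f$ by $f/\alpha$), assume $\alpha = 1$; write $L = \Log^3(|f(x)|)$ for the essentially constant value on $\operatorname{supp} f$, so that up to a harmless constant factor we may treat $L$ as a genuine constant. Decompose $f$ dyadically in amplitude: set $E_j = \{x : 2^j \le |f(x)| < 2^{j+1}\}$ and $f_j = f \chi_{E_j}$, so $f = \sum_j f_j$, where the sum ranges over those $j$ with $E_j \ne \emptyset$; for all such $j$ we have $\Log^3(2^j) \lesssim L$ since $2^j \le |f|$ on $E_j$ and $\Log^3$ is increasing (and conversely $L \lesssim \Log^3(2^{j+1})$).

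The next step is a pigeonhole/sparsification over the amplitude levels. Since $\mathcal{M}$ is sublinear, $\mathcal{M}f \le \sum_j \mathcal{M}f_j \le \sum_j 2^{j+1}\, \mathcal{M}\chi_{E_j}$. The key point is that the surviving levels $j$ form, after grouping, a set on which $2^j$ grows at least geometrically, so that $\sum_j 2^j |E_j| \lesssim \int |f|$; more importantly, the exceptional-set measures will sum geometrically. Concretely, for $\lambda > 0$ I would split $\{\mathcal{M}f > \lambda\}$ by allotting to level $j$ the threshold $\lambda_j \sim 2^j \lambda / (\text{something summable in } j)$ — e.g. using $\{\mathcal{M}f > \lambda\} \subseteq \bigcup_j \{2^{j+1}\mathcal{M}\chi_{E_j} > c\, 2^{j}\lambda\, 2^{-|j - j_0|}... \}$ type splitting, or more cleanly, observing it suffices to prove the weak-$(1,1)$-type bound at the single level $\lambda = 1$ after the normalization and then estimate $|\{\mathcal{M}f > 1\}|$ directly. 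Applying Proposition \ref{mainpropo} to each $\chi_{E_j}$ at level $\alpha_j = 2^{-(j+1)}$ gives
\begin{align*}
|\{\mathcal{M}f_j > \tfrac12\}| = |\{\mathcal{M}\chi_{E_j} > 2^{-(j+1)}\}| \le C\, 2^{j+1} \int_{E_j} \Log^3(2^{j+1} \cdot \chi_{E_j})\, dx \lesssim 2^{j} |E_j| \Log^3(2^{j+1}),
\end{align*}
and since $\Log^3(2^{j+1}) \lesssim L \lesssim \Log^3(|f(x)|)$ for $x \in E_j$, summing over $j$ yields $\sum_j |\{\mathcal{M}f_j > \tfrac12\}| \lesssim \sum_j \int_{E_j} |f| \Log^3(|f|)\, dx = \int |f| \Log^3(|f|)\, dx$. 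To control $|\{\mathcal{M}f > 1\}|$ by $\sum_j |\{\mathcal{M}f_j > \tfrac12 \cdot 2^{-|j-j_0|}\}|$ one inserts a geometrically-decaying allotment $\varepsilon_j = c\, 2^{-|j - j_0|}$ with $\sum_j \varepsilon_j \le 1$, centred at a level $j_0$ chosen so $2^{j_0} \sim 1$; applying Proposition \ref{mainpropo} at threshold $\varepsilon_j 2^{-(j+1)}$ replaces $\Log^3(2^{j+1}/\varepsilon_j) = \Log^3(2^{j+1} 2^{|j-j_0|}/c) \lesssim \Log^3(2^{O(|j|)})$, and crucially when $|j|$ is large this is $\lesssim |j| \lesssim 2^{|j-j_0|/2}$, which is still absorbed by the remaining geometric factor $2^{j} \varepsilon_j^{1/2}$ or similar — this is where one uses that $\Log^3$ grows slower than any power.

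The main obstacle I anticipate is making the allotment argument in the previous paragraph genuinely clean: one must split $\{\mathcal{M}f > 1\}$ across infinitely many amplitude levels $j \in \mathbb{Z}$ (both very large positive $j$, where $|E_j|$ is tiny, and very negative $j$, where $2^j$ is tiny) while only paying $\Log^3$ of the local amplitude, not $\Log^3$ of the amplitude divided by a tiny allotment. The resolution is exactly the "essentially constant" hypothesis, which forces all contributing $j$ to have $\Log^3(2^j)$ comparable, so that the function $f$ is, up to the geometric amplitude scaling, supported where $|f|$ lies in a single $\Log^3$-level set; then the infinite sum over $j$ is in reality a sum with geometric growth in $2^j$ and the $\Log^3$ factor is uniformly bounded by $\Log^3(|f|)$ on the support. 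I would therefore organize the proof so that the amplitude decomposition is performed first, the $\Log^3$ factor is pulled out as the uniform constant $L$ at the very start, and Proposition \ref{mainpropo} is applied with that fixed $L$ absorbed — reducing everything to the clean geometric summation $\sum_j 2^j |E_j| \lesssim \int |f|$, which follows from $2^j \le |f|$ on $E_j$ together with the disjointness of the $E_j$.
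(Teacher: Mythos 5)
Your proposal is circular relative to the paper's logical structure and, independently of that, its key extrapolation step fails. In the paper, Proposition \ref{mainpropo} (the restricted case $f=\chi_E$) is not an available ingredient: it is precisely the special case of Proposition \ref{midprop} in which $|f|\equiv 1$ on its support, and the entire body of the paper --- the Calder\'on--Zygmund reductions, the structural decomposition of Lemma \ref{decomplemma} into pieces $f_q^{\gamma}$ of fixed critical density, the hyperrectangle/cap exceptional sets controlled by Lemma \ref{sizelemma}, the $L^1$ summation of intermediate heights in Lemma \ref{1lem}, and the $L^2$ estimate of Proposition \ref{mainprop4} --- constitutes the direct proof of Proposition \ref{midprop}. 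Deriving \ref{midprop} from \ref{mainpropo} therefore assumes what is to be proved.

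Even granting Proposition \ref{mainpropo} as a black box, the layer-cake reduction does not close. The hypothesis that $\Log^3(|f|/\alpha)$ is essentially constant does \emph{not} confine the amplitude of $f$ to boundedly many dyadic levels: because $\Log^3$ is a triple logarithm, it is essentially constant even as $|f|/\alpha$ ranges over all of $[1,\alpha^{-1}]$ for any remotely reasonable $\alpha$, so the number of nonempty levels $E_j$ can be as large as $\log(\alpha^{-1})$. The splitting $|\{\mathcal{M}f>\lambda\}|\le\sum_j|\{2^{j+1}\mathcal{M}\chi_{E_j}>\lambda_j\}|$ requires an allocation $\sum_j\lambda_j\le\lambda$; with the proportional choice $\lambda_j\propto 2^j|E_j|$ each term contributes about $\lambda^{-1}\Norm{f}_1\Log^3(\cdots)$, so the sum over $j$ costs an extra factor equal to the number of levels, up to $\log(\alpha^{-1})$ --- vastly larger than the $\Log^3(\alpha^{-1})$ you are allowed. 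Your geometric allocation $\lambda_j\sim\lambda\,2^{-|j-j_0|}$ is worse still: it inserts a factor $2^{|j-j_0|}$ against $2^j|E_j|$, which is unbounded relative to $\Norm{f}_1$ whenever the mass of $f$ is spread over many levels. This is the standard obstruction to upgrading restricted weak type to weak type (one generically loses a logarithm, which dwarfs a triple logarithm), and it is exactly why the paper proves the general statement directly rather than extrapolating from characteristic functions; the slowly varying nature of $\Log^3$, which you invoke as the rescue, is in fact what makes the hypothesis too weak for your argument.
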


At the end of the paper we will prove Proposition \ref{mainpropo2}. This will follow easily from the proof of Proposition \ref{midprop}, as the proof uses only $L^1$ and $L^2$ methods.
\newline
\indent
To prove Proposition \ref{midprop}, we may assume without loss of generality that $f\le 1$ and that $\alpha<1$, and further that $\alpha\le f\le 1$ on the support of $f$. By scaling considerations, it suffices to prove Proposition \ref{midprop} in the special case that $\Log^3(|f(x)|/\alpha)\approx\Log^3(\alpha^{-1})$ on the support of $f$. For any number $\delta>0$, let $\mathcal{Q}_{\delta}$ be a grid of cubes of sidelength $\delta$ partitioning $\mathbb{R}^d$. By taking limits, we may assume that there is some small number $\delta>0$ such that $f$ is a linear combination of characteristic functions of cubes $Q\in\mathcal{Q}_{\delta}$, i.e. that $f$ is \textit{granular} in the sense defined in \cite{stw}. We refer to $\delta$ as the \textit{grain size} of $f$. Also, throughout what follows, all logarithms will be assumed to be base $2$. 

\subsection*{Reductions using Calder\'{o}n-Zygmund Theory}
Similarly to \cite{stw}, we first make some standard reductions using Calder\'{o}n-Zygmund theory. Let $M_{HL}$ denote the Hardy-Littlewood maximal operator, and let
\begin{align*}
\Omega=\{x: M_{HL}(f)(x)>\alpha\}.
\end{align*}
Note that $f\chi_{\Omega^c}\le 2^{10}\alpha$, and by the $L^2$-boundedness of the lacunary spherical maximal operator and Chebyshev's inequality we have
\begin{align*}
|\{x: \mathcal{M}(f\chi_{\Omega^c})>\alpha\}|&\lesssim\alpha^{-2}\Norm{\cM(f\chi_{\Omega^c})}_2^2\lesssim\alpha^{-2}\Norm{f\chi_{\Omega^c}}_2^2
\\\lesssim\alpha^{-1}\Norm{f}_1.
\end{align*}
It thus suffices to prove the desired inequality with $f$ replaced by $f\chi_{\Omega}$. Let $\mathfrak{O}$ be the set of Whitney cubes obtained by applying a Whitney decomposition to $\Omega$. We define $f_q:=f\chi_{q}$ for any $q\in\mathfrak{O}$.
\newline
\indent
We now introduce some cancellation, by defining the projection operator $\Pi_q$ to be the projection operator onto a certain space of polynomials. This is a standard Calder\'{o}n-Zygmund theory technique also used in \cite{stw}.  That is, let $\{P_j\}_{j=1}^L$ be an orthonormal basis for the space of polynomials of degree $\le 100d$ on the unit cube $[-1/2, 1/2]^d$. If $q$ is a cube with center $x_q$ and sidelength $l(q)$, define
\begin{align*}
|\Pi_q[h](x)|=\chi_q(x)\sum_{j=1}^LP_j(\frac{x-x_q}{l(q)})\int_qh(y)P_j(\frac{y-x_q}{l(q)})\,\frac{dy}{l(q)^d}.
\end{align*}
Write
\begin{align*}
b_q=f_q-\Pi_q[f_q].
\end{align*}
Since 
\begin{align*}
|\Pi_q[f_q](x)|\lesssim\alpha,
\end{align*} 
it follows by the $L^2$-boundedness of the lacunary spherical maximal operator that
\begin{align*}
\Norm{\sup_k|\sum_q\Pi_q[f_q]\ast\sigma_k|}_2^2\lesssim\alpha\Norm{f}_1,
\end{align*}
and so by Chebyshev's inequality it remains to prove that
\begin{align*}
|\{x:\,\sup_k|\sum_qb_q\ast\sigma_k|>\alpha\}|\lesssim\alpha^{-1}\Log^3(\alpha^{-1})\Norm{f}_1.
\end{align*}
We will replace the sup by an $\ell^2$ norm and in fact show
\begin{align*}
|\{x:\,\bigg(\sum_k\big|\sum_qb_q\ast\sigma_k\big|^2\bigg)^{1/2}>\alpha\}|\lesssim\alpha^{-1}\Log^3(\alpha^{-1})\Norm{f}_1.
\end{align*}
We further reduce this to showing
\begin{align}\label{fir}
|\{x:\,\bigg(\sum_q\sum_k\big|b_q\ast\sigma_k\big|^2\bigg)^{1/2}>\alpha\}|\lesssim\alpha^{-1}\Log^3(\alpha^{-1})\Norm{f}_1
\end{align}
and
\begin{align}\label{sec}
|\{x:\,\bigg(\sum_k\sum_{q\ne q^{\prime}}(b_q\ast\sigma_k)(\overline{b_{q^{\prime}}\ast\sigma_k})\bigg)^{1/2}>\alpha\}|\lesssim\alpha^{-1}\Log^3(\alpha^{-1})\Norm{f}_1
\end{align}
We will first eliminate the easier of these two inequalities, (\ref{sec}). By Chebyshev, this reduces to proving
\begin{multline*}
\Norm{\bigg(\sum_k\sum_{q\ne q^{\prime}}(b_q\ast\sigma_k)(\overline{b_{q^{\prime}}\ast\sigma_k})\bigg)^{1/2}}_2^2\lesssim
\\
\sum_k\sum_{q\ne q^{\prime}}\bigg|\left<b_q\ast\sigma_k, b_{q^{\prime}}\ast\sigma_k\right>\bigg|\lesssim\alpha\Norm{f}_1,
\end{multline*}
which can be proven as in \cite{stw} by exploiting the smoothness of the kernel $\sigma_k\ast\sigma_k$ and the cancellation of $b_q$. Since the proof is nearly identical to that given in \cite{stw}, we omit it here. 
\newline
\indent
It remains to prove (\ref{fir}). By throwing away the unions of the supports of $\sigma_k\ast\chi_{q}$ for $k$ such that $2^k\le l(q)$, which has total measure $\lesssim \sum_q|q|\lesssim\alpha^{-1}|E|$, we may restrict the sum in $k$ to range over $k$ such that $2^k>l(q)$. It is easy to see that
\begin{multline*}
\sum_q\sum_{k:\, 2^k>l(q)}\Norm{\Pi_q[f_q]\ast\sigma_k}_2^2\lesssim\sum_q\sum_{k:\,2^k>l(q)}\Norm{\alpha\chi_q\ast\sigma_k}_2^2
\\
\lesssim\alpha\sum_q\Norm{f_q}_1\lesssim\alpha\Norm{f}_1.
\end{multline*}
Thus we may replace the left hand side of (\ref{fir}) by the same expression with each occurrence of $b_q$ replaced by $f_q$. We have thus reduced Proposition \ref{mainpropo} to proving the following proposition.
\begin{proposition}\label{mainprop}
Let $f\le 1$ and let $\alpha<1$. Let $f_q$ be defined as above, i.e. $f_q=f\chi_q$ where $q\in\mathfrak{Q}$, the collection of Whitney cubes for $f$ at height $\alpha$. Then we have the inequality
\begin{align}\label{mainineq}
|\{x:\,\bigg(\sum_{q\in\mathfrak{Q}}\sum_{k:\,2^k>l(q)}\big|\sigma_k\ast f_q\big|^2\bigg)^{1/2}>\alpha\}|\lesssim\alpha^{-1}\Log^3(\alpha^{-1})\Norm{f}_1.
\end{align}
\end{proposition}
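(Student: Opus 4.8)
The plan is to localize to one Whitney cube, rescale, decompose into generalized boxes, and then run an exceptional-set-plus-$L^2$ argument built around the cap structure of the sphere. Fix $q\in\mathfrak{Q}$ and, after a translation and a dilation, assume $l(q)=1$, so the inner sum runs over $k\ge 0$. Since the quantity in Proposition~\ref{mainprop} is the $\ell^2$-aggregate over $q$ of the terms $\big(\sum_{k\ge 0}|\sigma_k\ast f_q|^2\big)^{1/2}$ and the Whitney cubes have bounded overlap, it suffices to produce for each $q$ an exceptional set $\mathcal{E}_q$ with $\sum_q|\mathcal{E}_q|\lesssim\alpha^{-1}\Log^3(\alpha^{-1})\Norm{f}_1$ off which $\sum_{k\ge 0}\int_{\mathbb{R}^d\setminus\mathcal{E}_q}|\sigma_k\ast f_q|^2\,dx\lesssim\alpha\Norm{f_q}_1$; Chebyshev on $\mathbb{R}^d\setminus\bigcup_q\mathcal{E}_q$ then closes the argument exactly as in the reductions already performed. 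To build $\mathcal{E}_q$ I would first decompose $f_q=\chi_{E_q}$ into characteristic functions of \emph{generalized boxes} $B$ as in \cite{stw}, each carrying a \emph{length} $\ell(B)$ and a \emph{thickness} $\tau(B)$: the length bounds the number of scales $k$ for which $\sigma_k\ast\chi_B$ may be discarded wholesale (contributing $\lesssim 2^k\ell(B)$ to $|\mathcal{E}_q|$ at scale $k$), while the thickness controls $\Norm{\sigma_k\ast\chi_B}_2$.

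Second, for a fixed box $B$ I would run the exceptional-set algorithm sketched in the introduction, which is where the geometry of the sphere enters. Using that $\sigma_k$ concentrates, near any given point of the sphere of radius $2^k$, on hyperrectangular caps, I would cover $\mathbb{R}^d$ by finitely many rotated grids of congruent hyperrectangles whose dimensions are prescribed by an iterative relation between $\tau(B)$, $\ell(B)$, and the cap parameters. For each direction $\omega\in S^{d-1}$ fixing the orientation of such a grid, and each grid hyperrectangle $R$ on which the restriction of $E_q$ has mass above a chosen threshold, I would adjoin to $\mathcal{E}_q$ the sumset of $R$ with the portion of the sphere whose outward normals lie in the angular window determined by the shortest side of $R$; this sumset is contained in a translate of an $O(\text{shortest side of }R)$-neighborhood of a spherical cap, so its measure is controlled, and only $\sim\Log^2(\alpha^{-1})$ distinct cap scales occur.

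Third, off $\mathcal{E}_q$ I would bound $\sum_k\int_{\mathbb{R}^d\setminus\mathcal{E}_q}|\sigma_k\ast\chi_B|^2$ by writing $\Norm{\sigma_k\ast\chi_B}_2^2=\langle\sigma_k\ast\sigma_k\ast\chi_B,\chi_B\rangle$ and expanding the kernel $\sigma_k\ast\sigma_k$ as a linear combination of $L^1$-normalized characteristic functions of hyperrectangles matching the caps used above. Against a grid hyperrectangle the size of this convolution is governed by the mass of $E_q$ on that hyperrectangle; summing over the grid and over $k$ and splitting the scales into \emph{light} (small such mass) and \emph{heavy} (large such mass), the naive estimate loses an extra factor of $\Log^2(\alpha^{-1})$ from the number of cap scales. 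The decisive point is that the contributions of the $\sim\Log^3(\alpha^{-1})$ \emph{intermediate} mass thresholds can be summed in $L^1$ rather than $L^2$ and absorbed into $\mathcal{E}_q$ --- the sole source of the $\Log^3$ loss --- and that once they are removed, the $L^2$ estimate for the surviving light scales and the support-size estimate for the surviving heavy scales each recover exactly the missing $\Log^2(\alpha^{-1})$; summing over all boxes $B\subseteq E_q$ and all $q$ and balancing via Chebyshev then yields Proposition~\ref{mainprop}.

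I expect the main obstacle to be the combinatorial bookkeeping in the second and third steps. One must (i) pin down, in general dimension $d$, the precise iterative relation between the dimensions of a generalized box and the cap structure of $\sigma_k$ that fixes the hyperrectangular grids --- the higher-dimensional replacement for the rectangles of \cite{stw}; (ii) check that the sumsets of grid hyperrectangles with spherical caps really do lie in thin translated cap-neighborhoods whose measures are summable against the masses of $E_q$; and, most delicately, (iii) show that the intermediate mass thresholds can genuinely be summed in $L^1$ and that deleting them cleanly restores the full $\Log^2(\alpha^{-1})$ gain in both the light-scale $L^2$ bound and the heavy-scale support bound. It is precisely the cancellation of these two double-logarithmic gains against the double-logarithmic loss from the cap decomposition, leaving only the triple-logarithmic cost of the discarded intermediate scales, that fixes the final exponent at $\Log^3$; so careful tracking of every logarithmic factor through the decomposition is the crux.
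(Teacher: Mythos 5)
Your plan follows essentially the same route as the paper: a box/density decomposition of $f_q$ on each Whitney cube, exceptional sets built by summing heavy hyperrectangles with spherical caps at $O(\Log^2(\alpha^{-1}))$ iteratively defined cap scales, a domination of $\sigma_k\ast\sigma_k$ by characteristic functions of hyperrectangles, and a light/intermediate/heavy split in which the $O(\Log^3(\alpha^{-1}))$ intermediate heights are summed in $L^1$ while the light part is estimated in $L^2$ and the heavy part is absorbed into the exceptional set. The only notable deviations are that the paper works with a critical-density decomposition $f_q=\sum_j f_q^{\gamma_j}$ (rather than the characteristic-function generalized boxes of \cite{stw}) so as to handle general $f\le 1$, and that the three ``obstacles'' you defer are precisely the substance of the paper's argument (its exceptional-set size lemma, the telescoping $L^1$ lemma, and the light-term $L^2$ estimate), so your write-up remains at the level of the paper's introductory outline.
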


\subsection*{Structural decomposition of $f_q$}
In \cite{stw}, the support of $f_q$ was decomposed into structures referred to as ``generalized boxes", which behaved in a certain way like $1$-dimensional sets and which had associated quantities referred to as ``length" and ``thickness," the former which governed support size estimates and the latter which controlled $L^2$ bounds. We describe a decomposition of $f_q$ that is in a similar spirit. List all dyadic numbers $\gamma$ between $\alpha^2 l(q)^{d-1}$ and $l(q)^{d-1}$ in increasing order as $\gamma_0, \gamma_1,\ldots, \gamma_N$. We decompose $E_q={\text{supp}}(f_q)$ into structures $E_q^{\gamma_i}$ similar to generalized boxes. we do this by first excising the dyadic cubes on which $f_q$ is ``heaviest", i.e., of ``weight" $\gamma_N$, and collect those cubes as $E_q^{\gamma_N}$. Next, we continue to $\gamma_{N-1}$ and excise all dyadic cubes on which the remainder of $f_q$ is of weight $\gamma_{N-1}$, and collect those cubes as $E_q^{\gamma_{N-1}}$, and then continue this process. We thus inductively define
\begin{align*}
{E_q^{\gamma_N}}:=\bigcup_{Q\text{ dyadic},\,l(Q)\le 2^{10}l(q):\,\int_Q|f_q|\ge \gamma_N\cdot l(Q)}Q,
\end{align*}
and for $1<j<N$,
\begin{align*}
E_q^{\gamma_j}:=\bigcup_{Q\text{ dyadic}, \,l(Q)\le 2^{10}l(q):\,  \int_Q|f_q-f_q\chi_{\bigcup_{l>j}E_q^{\gamma_l}}|\ge\gamma_j\cdot l(Q)}Q.
\end{align*}
Set
\begin{align*}
E_q^{\gamma_0}:=q\setminus\bigg(\bigcup_{1\le j\le N}E_q^{\gamma_j}\bigg).
\end{align*}
Inductively define
\begin{align*}
f_q^{\gamma_N}=f_q\chi_{E_q^{\gamma_N}},
\end{align*}
and for $0\le j<N$, define
\begin{align*}
f_q^{\gamma_j}=(f_q-\sum_{l>j}f_q^{\gamma_l})\chi_{E_q^{\gamma_j}}.
\end{align*}
It follows easily by induction on $j$ that for every $j$ we have
\begin{align}\label{ind1}
\sum_{l>j}f_q^{\gamma_l}=f_q\chi_{\bigcup_{l>j}E_q^{\gamma_l}},
\end{align}
so that we indeed have a decomposition
\begin{align}\label{decomp}
f_q=\sum_{j=1}^Nf_q^{\gamma_j}.
\end{align}
We will refer to $f_q^{\gamma}$ as having \textit{critical density} $\gamma$. Let $\mathcal{Q}$ be a maximal cover of $E_q^{\gamma}$ by dyadic cubes (so that $E_q^{\gamma}=\bigcup_{Q\in\mathcal{Q}}Q$), and define the \textit{length} $\lambda(f_q^{\gamma})$ of $f_q^{\gamma}$ as
\begin{align}\label{lengthdef}
\lambda(f_q^{\gamma}):=\sum_{Q\in\mathcal{Q}}l(Q).
\end{align}
Then by construction of the $f_q^{\gamma_j}$ and (\ref{ind1}), we have
\begin{align}\label{width1}
\int|f_q^{\gamma_j}|\approx\gamma_j\cdot\lambda(f_q^{\gamma_j}), \qquad j>0
\end{align}
and
\begin{align}\label{width2}
\int|f_q^{\gamma_0}|\lesssim\gamma_0\cdot\lambda(f_q^{\gamma_0}).
\end{align}
Moreover, for every cube $Q$,
\begin{align}\label{width3}
\int_Q|f_q^{\gamma}|\lesssim\gamma\cdot l(Q).
\end{align}

We will use the decomposition (\ref{decomp}) in an essential way throughout the rest of the paper, as well as the key properties (\ref{width1}), (\ref{width2}) and (\ref{width3}). In \cite{stw}, the analog of (\ref{width2}) is that for every generalized box $B$ of thickness $\gamma$ and length $\lambda$, we have $|B|\approx\gamma\cdot\lambda$. For convenience, we restate this decomposition as the following lemma.

\begin{lemma}[Structural decomposition lemma]\label{decomplemma}
Let $f\le 1$ and let $\alpha<1$, and for each $q\in\mathfrak{Q}$ define $f_q:=f\chi_q$, where $\mathfrak{Q}$ is the collection of Whitney cubes for $f$ at height $\alpha$. List the dyadic numbers between $\alpha^2l(q)^{d-1}$ and $l(q)^{d-1}$ in increasing order as $\gamma_0, \gamma_1, \ldots, \gamma_N$. Then we can decompose
\begin{align}\label{decomp2}
f_q=\sum_{j=1}^Nf_q^{\gamma_j},
\end{align}
so that if for each $j$ we define the \textit{length} $\lambda(f_q^{\gamma_j})$ of $f_q^{\gamma_j}$ as in (\ref{lengthdef}), then we have that (\ref{width1}), (\ref{width2}) and (\ref{width3}) holds. We refer to $\gamma_j$ as the \textit{critical density} of $f_q^{\gamma_j}$.
\end{lemma}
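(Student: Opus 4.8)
The lemma is essentially a bookkeeping statement: it asserts that the inductive construction of the $f_q^{\gamma_j}$ given just above it actually produces a decomposition with the stated properties. So my plan is simply to verify, in order, the three ingredients that were sketched in the text preceding the lemma statement. First I would establish the ``telescoping'' identity \eqref{ind1}, namely $\sum_{l>j}f_q^{\gamma_l}=f_q\chi_{\bigcup_{l>j}E_q^{\gamma_l}}$, by downward induction on $j$. The base case $j=N-1$ is immediate from $f_q^{\gamma_N}=f_q\chi_{E_q^{\gamma_N}}$; for the inductive step, adding $f_q^{\gamma_j}=(f_q-\sum_{l>j}f_q^{\gamma_l})\chi_{E_q^{\gamma_j}}$ to the inductive hypothesis and using that the sets $E_q^{\gamma_l}$ for distinct $l$ are pairwise disjoint (this disjointness is forced by the excision procedure, since at stage $j$ one only looks at dyadic cubes in the \emph{remainder} $f_q-f_q\chi_{\bigcup_{l>j}E_q^{\gamma_l}}$) collapses the sum to $f_q\chi_{\bigcup_{l\ge j}E_q^{\gamma_l}}$. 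Taking $j=0$ and recalling that $E_q^{\gamma_0}=q\setminus\bigcup_{1\le j\le N}E_q^{\gamma_j}$ exhausts $q$, one gets the full decomposition \eqref{decomp2}.

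Next I would prove the length estimates \eqref{width1} and \eqref{width2}. For $j>0$: by \eqref{ind1}, the function $f_q^{\gamma_j}$ agrees with the remainder $g_j:=f_q-f_q\chi_{\bigcup_{l>j}E_q^{\gamma_l}}$ on $E_q^{\gamma_j}$, and $E_q^{\gamma_j}$ is by definition the union of all dyadic cubes $Q$ with $l(Q)\le 2^{10}l(q)$ on which $\int_Q|g_j|\ge\gamma_j l(Q)$. Choosing the maximal cover $\mathcal{Q}$ of $E_q^{\gamma_j}$ by such dyadic cubes (these are automatically pairwise disjoint once we pass to a maximal subcollection), summing $\int_Q|f_q^{\gamma_j}|\ge\gamma_j l(Q)$ over $Q\in\mathcal{Q}$ gives the lower bound $\int|f_q^{\gamma_j}|\gtrsim\gamma_j\lambda(f_q^{\gamma_j})$. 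For the matching upper bound one uses maximality: if a dyadic cube $Q$ is \emph{not} contained in $E_q^{\gamma_j}$ then $\int_Q|g_j|<\gamma_j l(Q)$, so each cube in the maximal cover has a dyadic parent violating the density threshold, whence $\int_Q|f_q^{\gamma_j}|\le\int_Q|g_j|\le\int_{\hat Q}|g_j|< \gamma_j l(\hat Q)=2\gamma_j l(Q)$; summing gives $\int|f_q^{\gamma_j}|\lesssim\gamma_j\lambda(f_q^{\gamma_j})$. The estimate \eqref{width2} for $j=0$ needs only the upper-bound half of this argument, since $E_q^{\gamma_0}$ is the leftover set and its covering cubes again fail the $\gamma_0$-threshold. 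Finally, \eqref{width3} — that $\int_Q|f_q^\gamma|\lesssim\gamma l(Q)$ for \emph{every} cube $Q$, dyadic or not — follows by covering an arbitrary cube $Q$ by a bounded number of dyadic cubes of comparable sidelength, applying the per-dyadic-cube bound $\int_{Q'}|f_q^\gamma|\le\int_{Q'}|g_j|\lesssim\gamma l(Q')$ (valid by the ``parent'' argument above, or directly from the defining inequality at the relevant scale) to each, and summing; one must also check the degenerate regime where $Q$ is larger than $2^{10}l(q)\ge$ the support, in which case $\int_Q|f_q^\gamma|\le\int|f_q^\gamma|\lesssim\gamma l(q)\le\gamma l(Q)$ using $\lambda(f_q^\gamma)\lesssim l(q)$.

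The only genuinely delicate point — and the step I expect to require the most care — is the upper bound in \eqref{width1}, i.e. showing $\lambda(f_q^{\gamma_j})$ is not too \emph{large}. The subtlety is that the maximal cover $\mathcal{Q}$ is a cover of $E_q^{\gamma_j}$ by \emph{maximal} dyadic subcubes, and one must be sure these are genuinely disjoint and that their dyadic parents are not contained in $E_q^{\gamma_j}$, so that the parent-level density bound $\int_{\hat Q}|g_j|<\gamma_j l(\hat Q)$ is available; this uses that $E_q^{\gamma_j}$, being a union of dyadic cubes, is ``dyadically closed downward but not upward'' in the right way, together with the restriction $l(Q)\le 2^{10}l(q)$ which caps the scale of the covering cubes. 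I would handle this by a standard stopping-time/maximality argument, exactly as in the corresponding Calderón–Zygmund construction, and note that everything here is formally parallel to the generalized-box construction of \cite{stw}, with \eqref{width2} playing the role of the identity $|B|\approx\gamma\cdot\lambda$ there; the remaining verifications are routine.
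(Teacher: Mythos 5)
Your proposal is correct and takes essentially the same route as the paper, which treats the lemma as a restatement of the inductive construction preceding it (the paper leaves (\ref{ind1}) as an ``easy induction'' and derives (\ref{width1})--(\ref{width3}) ``by construction,'' exactly the verifications you spell out). One small inaccuracy: the sets $E_q^{\gamma_l}$ need not be pairwise disjoint (a cube excised at density $\gamma_j$ may well contain cubes excised at higher densities), but your telescoping step does not actually require this, since with $U=\bigcup_{l>j}E_q^{\gamma_l}$ the identity $(f_q-f_q\chi_{U})\chi_{E_q^{\gamma_j}}=f_q\chi_{E_q^{\gamma_j}\setminus U}$ already collapses the sum to $f_q\chi_{U\cup E_q^{\gamma_j}}$.
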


\subsection*{Further reductions}
First, we show that we may get rid of the term $\sigma_k\ast{f_q^{\gamma_0}}$, so that we may replace the left hand side of (\ref{mainineq}) in Proposition \ref{mainprop} with
\begin{align*}
|\{x:\bigg(\sum_q\sum_{k:\, 2^k>l(q)}|\sum_{\gamma>\gamma_0}\sigma_k\ast f_q^{\gamma}|^2\bigg)^{1/2}>\alpha\}|.
\end{align*}
Indeed, we have the well-known pointwise estimate
\begin{align}\label{point}
\sigma_k\ast\sigma_k(x)\lesssim 2^{-k(d-1)}|x|^{-1}\chi_{|x|\le 2^{k+1}}.
\end{align}
For $2^k\ge l(q)$ we take $L^2$ norms and use (\ref{point}) to obtain the estimate
\begin{multline*}
\Norm{\sigma_k\ast f_q^{\gamma_0}}_2^2=\left<f_q^{\gamma_0}, \sigma_k\ast\sigma_k\ast f_q^{\gamma_0}\right>
\\
\lesssim 2^{-k(d-1)}\int|f_q^{\gamma_0}(x)|\bigg(\sum_{l: \alpha^{2/(d-1)}l(q)\le 2^l\le 100l(q)}\int_{y:\,|x-y|\approx 2^l}\frac{1}{|x-y|}|f_q^{\gamma_0}(y)|\,dy
\\
+\sum_{l: 2^l<\alpha^{2/(d-1)}l(q)}\int_{y:\,|x-y|\approx 2^l}\frac{1}{|x-y|}|f_q^{\gamma_0}(y)|\,dy
\bigg).
\end{multline*}
Now we use (\ref{width3}) to deal with the first term and the fact that $f_q^{\gamma_0}\le f\le 1$ to deal with the second term, which yields
\begin{multline}\label{2ess}
\Norm{\sigma_k\ast f_q^{\gamma_0}}_2^2\lesssim 2^{-k(d-1)}\int|f_q^{\gamma_0}|\bigg(\sum_{l: \alpha^{2/(d-1)}l(q)\le 2^l\le 100l(q)}\alpha^2l(q)^{d-1}
\\
+\sum_{l: 2^l<\alpha^{2/(d-1)}l(q)}2^{l(d-1)}\bigg)
\\
\lesssim2^{-k(d-1)}\Norm{f_q^{\gamma_0}}_1(\alpha^2l(q)^{d-1}\log(\alpha^{-1})\lesssim\alpha\Norm{f_q^{\gamma_0}}_12^{-k(d-1)}l(q)^{d-1}.
\end{multline}
\newline
\indent
Next, for $k(d-1)<\log(\alpha^{-1}\gamma)$, we may throw away the support of $\sigma_k\ast f_q^{\gamma}$, since
\begin{align*}
|\text{supp}(\sigma_k\ast f_q^{\gamma})|\lesssim 2^{k(d-1)}\lambda(f_q^{\gamma}),
\end{align*}
and hence by (\ref{width1}),
\begin{multline}\label{suppess}
\bigg|\bigcup_{\gamma, k:\, k(d-1)<\log(\alpha^{-1}\gamma)}\text{supp}(\sigma_k\ast f_q^{\gamma})\bigg|\lesssim\sum_{\gamma}\alpha^{-1}\gamma\lambda(f_q^{\gamma})
\\
\lesssim\sum_{\gamma}\alpha^{-1}\Norm{f_q^{\gamma}}_1\lesssim\alpha^{-1}\Norm{f_q}_1.
\end{multline}
This will allow us to discard sufficiently small scales $2^k$. We also will discard sufficiently large scales $2^k$ by proving as in \cite{stw} that we have the estimate
\begin{align}\label{2ess2}
\Norm{\sigma_k\ast{f_q^{\gamma}}}_2^2\lesssim \gamma\log(\alpha^{-1})2^{-k(d-1)}\Norm{f_q^{\gamma}}_1.
\end{align}
Indeed, the pointwise estimate
\begin{align*}
\sigma_k\ast\sigma_k(x)\lesssim 2^{-k(d-1)}|x|^{-1}\chi_{|x|\le 2^{k+1}}
\end{align*}
implies that
\begin{multline*}
\Norm{\sigma_k\ast f_q^{\gamma}}_2^2=\left<{f_q^{\gamma}}, \sigma_k\ast\sigma_k\ast{f_q}^{\gamma}\right>
\\
\lesssim2^{-k(d-1)}\bigg(\int |f_q^{\gamma}(x)|\sum_{\log(\gamma)/(d-1)\le j\le 100\log(l(q))}\int_{|x-y|\approx 2^j}\frac{1}{|x-y|}|f_q^{\gamma}(y)|\,dy\,dx
\\
+\int|f_q^{\gamma}(x)|\sum_{j\le\log(\gamma)/(d-1)}\int_{|x-y|\approx 2^j}\frac{1}{|x-y|}|f_q^{\gamma}(y)|\,dy\,dx\bigg)
\end{multline*}
We may put the integral in $x$ in each term inside the sum in $j$, and then we split the integral in $x$ over a sum of dyadic cubes $\mathcal{Q}_j$ of sidelength $2^j$. Using (\ref{width1}) to deal with the first term and the fact that $f_q^{\gamma}\le f_q\le 1$ for the second term yields
\begin{multline*}
\Norm{\sigma_k\ast f_q^{\gamma}}_2^2\lesssim2^{-k(d-1)}\bigg(\sum_{\log(\gamma)/(d-1)\le j\le 100\log(l(q))}2^{-j}\sum_{Q\in\mathcal{Q}_j}2^j\gamma \int_Q|f_q^{\gamma}|
\\
+\sum_{j\le\log(\gamma)/(d-1)}\sum_{Q\in\mathcal{Q}_j}2^{j(d-1)}\int_Q|f_q^{\gamma}|\bigg)
\\
\lesssim 2^{-k(d-1)}\gamma\log(\alpha^{-1})\int|f_q^{\gamma}|,
\end{multline*}
since $\gamma\gtrsim \alpha^2l(q)^{d-1}$, and so (\ref{2ess2}) is proved. It follows that 
\begin{align*}
\sum_{k:\,k(d-1)\ge\log(\gamma\alpha^{-1})+100\log(\log(\alpha^{-1}))}\Norm{\sigma_k\ast{f_q^{\gamma}}}_2^2\lesssim\alpha(\log(\alpha^{-1}))^{-10}\Norm{f_q^{\gamma}}_1,
\end{align*}
and hence by Cauchy-Schwarz,
\begin{multline}\label{2ess3}
\sum_k\Norm{\sigma_k\ast\sum_{\gamma:\,k(d-1)\ge\log(\gamma\alpha^{-1})+100\log(\log(\alpha^{-1}))}f_q^{\gamma}}_2^2
\\
\lesssim\sum_k(\log(\alpha^{-1}))\sum_{\gamma:\,k(d-1)\ge\log(\gamma\alpha^{-1})+100\log(\log(\alpha^{-1}))}\Norm{\sigma_k\ast{f_q^{\gamma}}}_2^2
\\
\lesssim\sum_{\gamma}\alpha\Norm{f_q^{\gamma}}_1\lesssim\alpha\Norm{f_q}_1.
\end{multline}
Combining (\ref{2ess}), (\ref{suppess}) and (\ref{2ess3}) we see that we have reduced Proposition \ref{mainprop} to the following.
\begin{proposition}\label{mainprop2}
Let $f\le 1$ and let $\alpha<1$. Let $f_q=f\chi_q$ for each $q\in\mathfrak{Q}$, where $\mathfrak{Q}$ is the collection of Whitney cubes for $f$ at height $\alpha$. Decompose
\begin{align*}
f_q=\sum_{j=0}^Nf_q^{\gamma_j}.
\end{align*}
as in Lemma \ref{decomplemma}. We have the inequality
\begin{multline}\label{mainineq3}
|\{x:\,\bigg(\sum_q\sum_{k:\,2^k>l(q)}\big|\sum_{\gamma:\, \log(\alpha^{-1}\gamma)\le k(d-1)\le \log(\alpha^{-1}\gamma)+100\Log^2(\alpha^{-1})}\sigma_k\ast{f_q^{\gamma}}\big|^2\bigg)^{1/2}
\\
>\alpha\}|\lesssim\alpha^{-1}Log^3(\alpha^{-1})\Norm{f}_1.
\end{multline}
\end{proposition}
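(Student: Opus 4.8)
\textbf{Proof proposal for Proposition \ref{mainprop2}.}

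The plan is to fix, for each Whitney cube $q$ and each critical density $\gamma$ appearing in the decomposition of $f_q$, the window $W_q^\gamma$ of scales $k$ with $\log(\alpha^{-1}\gamma)\le k(d-1)\le \log(\alpha^{-1}\gamma)+100\,\Log^2(\alpha^{-1})$, and to analyze $\sum_{\gamma}\sigma_k\ast f_q^\gamma$ only for $k$ in the (at most $\sim\Log^3(\alpha^{-1})$ many $\gamma$'s that can be active at a given $k$) relevant range. The first step is to note that since $\gamma$ ranges over dyadic numbers between $\alpha^2 l(q)^{d-1}$ and $l(q)^{d-1}$, at most $\sim\log(\alpha^{-1})$ distinct critical densities occur for a fixed $q$, and the window constraint forces $k(d-1)$ to lie within an interval of length $100\,\Log^2(\alpha^{-1})$ of $\log(\alpha^{-1}\gamma)$; hence for each $k$ only $O(\Log^2(\alpha^{-1}))$ densities $\gamma$ contribute, and each $\gamma$ is active for $O(\Log^2(\alpha^{-1}))$ scales $k$. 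I would then separate the diagonal contribution $\sum_{\gamma}\sum_k \Norm{\sigma_k\ast f_q^\gamma}_2^2$ from the cross terms $\sum_k\sum_{\gamma\ne\gamma'}\langle\sigma_k\ast f_q^\gamma,\sigma_k\ast f_q^{\gamma'}\rangle$, just as (\ref{fir}) was split off from (\ref{sec}); the cross terms between distinct densities inside a single $q$ can be handled by the same kernel-smoothness argument already invoked for (\ref{sec}) (the functions $f_q^\gamma$ are supported in $q$ and one exploits that $\sigma_k\ast\sigma_k$ is a nice kernel), contributing only $\lesssim\alpha\Norm{f}_1$. This reduces matters to the diagonal square function, for which (\ref{2ess2}) gives $\Norm{\sigma_k\ast f_q^\gamma}_2^2\lesssim\gamma\log(\alpha^{-1})2^{-k(d-1)}\Norm{f_q^\gamma}_1$; summing over the $O(\Log^2(\alpha^{-1}))$ scales $k$ in $W_q^\gamma$, on which $2^{-k(d-1)}\approx\alpha\gamma^{-1}$, yields $\sum_{k\in W_q^\gamma}\Norm{\sigma_k\ast f_q^\gamma}_2^2\lesssim \alpha\,\log(\alpha^{-1})\,\Log^2(\alpha^{-1})\Norm{f_q^\gamma}_1$, and then summing over $\gamma$ and $q$ and applying Chebyshev gives the bound $\alpha^{-1}\log(\alpha^{-1})\Log^2(\alpha^{-1})\Norm{f}_1\lesssim\alpha^{-1}\Log^3(\alpha^{-1})\Norm{f}_1$, which is exactly (\ref{mainineq3}).

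There is, however, a subtlety: the naive estimate above spends a full factor of $\log(\alpha^{-1})$ from (\ref{2ess2}) \emph{and} a factor $\Log^2(\alpha^{-1})$ from the window length, giving essentially $\Log^3$ with room to spare only if one is careful, but the real point of the window truncation is that for scales near the \emph{bottom} of $W_q^\gamma$ one should instead use the support estimate $|\supp(\sigma_k\ast f_q^\gamma)|\lesssim 2^{k(d-1)}\lambda(f_q^\gamma)\lesssim \alpha^{-1}\gamma\lambda(f_q^\gamma)\cdot 2^{\text{(small)}}$ from (\ref{suppess}), while for scales near the top one uses the $L^2$ bound (\ref{2ess2}); interpolating the choice of cutoff across the window is what keeps the loss at $\Log^3$ rather than worse. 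So I would split each window $W_q^\gamma$ into a ``heavy'' lower part, handled by throwing away $\bigcup_k \supp(\sigma_k\ast f_q^\gamma)$ whose total measure is controlled exactly as in (\ref{suppess}) by $\alpha^{-1}\Norm{f_q}_1$ times the window-length count, and a ``light'' upper part handled by the $L^2$ square-function estimate; balancing the split point so that each side contributes $\lesssim\alpha^{-1}\Log^3(\alpha^{-1})\Norm{f}_1$ is the combinatorial heart of the reduction. Finally, one assembles: the exceptional set from the heavy parts has the right measure, the light parts are dispatched by Cauchy--Schwarz in $\gamma$ (picking up one factor $\Log^2(\alpha^{-1})$ for the number of active $\gamma$'s) followed by the diagonal $L^2$ bound (\ref{2ess2}), and summing over $q$ using $\sum_q\Norm{f_q^\gamma}_1\le\Norm{f}_1$ closes the estimate.

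The main obstacle I expect is \emph{not} the $L^2$ arithmetic but rather making the window-truncation bookkeeping genuinely lossless: one must verify that after restricting to $W_q^\gamma$ the overlap in $k$ across different $\gamma$ is only $O(\Log^2(\alpha^{-1}))$-fold (so that Cauchy--Schwarz in $\gamma$ costs exactly $\Log^2$ and no more), and simultaneously that the support sets being discarded for the heavy scales do not accumulate beyond $\alpha^{-1}\Norm{f}_1$ when summed over all $q$, all $\gamma$, and all $k$ in the lower window — this requires the length identity (\ref{width1}) $\int|f_q^\gamma|\approx\gamma\lambda(f_q^\gamma)$ to absorb precisely the $\alpha^{-1}\gamma$ factor coming from $2^{k(d-1)}$ at the bottom of the window. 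In other words, the delicate point is that the ``$100\,\Log^2(\alpha^{-1})$'' width of the window is exactly wide enough to be useful (it is where the later cap-decomposition gains will live) but narrow enough that the crude bounds here already give $\Log^3$; checking that these two demands are compatible, and that the cross-term estimate borrowed from \cite{stw} still applies verbatim to the finer pieces $f_q^\gamma$, is where the care is needed.
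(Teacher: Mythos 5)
Your argument has a fatal gap rooted in a misreading of the notation: in this paper $\Log^3(\alpha^{-1})$ is the \emph{triple-iterated} logarithm $\log\log\log(\alpha^{-1})$ (see the inductive definition $\Log^n(t)=\Log(100+\Log^{n-1}(t))$ in the introduction), not the cube $(\log\alpha^{-1})^3$. Your final bound, $\alpha^{-1}\log(\alpha^{-1})\,\Log^2(\alpha^{-1})\Norm{f}_1$, is therefore exponentially larger than the claimed $\alpha^{-1}\Log^3(\alpha^{-1})\Norm{f}_1$ — it is in fact worse than the Christ--Stein $L\log L$ bound that this proposition is meant to improve. The diagonal estimate via (\ref{2ess2}) irreducibly carries a factor $\log(\alpha^{-1})$ (coming from the $\sim\log(\alpha^{-1})$ dyadic distance scales in the kernel bound $\sigma_k\ast\sigma_k(x)\lesssim 2^{-k(d-1)}|x|^{-1}$), and no bookkeeping of the window $\mathcal{K}_2$, no rebalancing of the split between support estimates and $L^2$ estimates, can remove it: that split has already been performed optimally in the reduction from Proposition \ref{mainprop} to Proposition \ref{mainprop2} (via (\ref{suppess}) and (\ref{2ess3})), and your proposed ``heavy lower part / light upper part'' division of the window merely repeats it. (A smaller error: since $2^{-k(d-1)}$ decays geometrically across the window, the sum over $k\in W_q^\gamma$ of the right side of (\ref{2ess2}) is $\lesssim\alpha\log(\alpha^{-1})\Norm{f_q^\gamma}_1$, not $\alpha\log(\alpha^{-1})\Log^2(\alpha^{-1})\Norm{f_q^\gamma}_1$; but this does not save the argument.)

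What the paper actually does in Section 3 is entirely different and is where the content of the result lives. The kernel $\sigma_k\ast\sigma_k$ is dominated by characteristic functions of hyperrectangles at only $N\lesssim\Log^2(\alpha^{-1})$ geometric cap scales $c_i$ (see (\ref{scale}) and (\ref{domdec})), rather than $\log(\alpha^{-1})$ dyadic annuli. Each $\sigma_k\ast f_q^\gamma$ is then decomposed by ``height'' as in (\ref{compdec}): the heavy heights are supported in an exceptional set of measure $\lesssim\alpha^{-1}\Norm{f}_1$ built from heavy hyperrectangles and the adapted cap measures (Lemma \ref{sizelemma}); the $O(\Log^3(\alpha^{-1}))$ intermediate heights are summed in $L^1$ using an angular disjointness across $k$ (Lemma \ref{1lem}) — this is the sole source of the $\Log^3$ loss; and the light remainder satisfies the improved bound $\Norm{\sigma_k\ast f_q^\gamma-g_k^{m(k,q,\gamma),\gamma}}_2^2\lesssim\alpha(\Log^2(\alpha^{-1}))^{-2}\Norm{f_q^\gamma}_1$, whose gain of $(\Log^2(\alpha^{-1}))^{-2}$ is exactly what absorbs the Cauchy--Schwarz cost over the $\lesssim\Log^2(\alpha^{-1})$ active densities $\gamma$ and the $\lesssim\Log^2(\alpha^{-1})$ scales $k$ per $\gamma$. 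None of this machinery appears in your proposal, and without it the $\log(\alpha^{-1})$ loss in (\ref{2ess2}) cannot be beaten down to a triple logarithm.
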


\section{Proof of Proposition \ref{mainprop2}}

Our plan for proving Proposition \ref{mainprop2} is loosely as follows. For $q\in\mathfrak{O}$, we will show that for each $k$ we may write 
\begin{align*}
\sigma_k\ast{f_q}=g_{1, k, q}+g_{2, k, q}
\end{align*} 
for nonnegative functions $g_{1, k, q}$ and $g_{2, k, q}$ so that there is a set $A_{k}$ such that $A:=\bigcup_k A_{k}:=\bigcup_{k, q}A_{q, k}$ has measure $\lesssim \alpha^{-1}\Norm{f}_1$ and so that
\begin{align}\label{mainineq2}
\sum_q\sum_{k>l(q)}\Norm{g_{1, k, q}}_{L^2(\mathbb{R}^2\setminus A_{k})}^2\lesssim\alpha\Norm{f}_1
\end{align} 
and 
\begin{align*}
\Norm{\sum_q\sum_{k>l(q)} g_{2, k, q}}_1\lesssim \Norm{f}_1\log(\log(\log(\alpha^{-1}))).
\end{align*}

\subsection*{Throwing away exceptional sets}

We will now introduce an algorithm for defining exceptional sets. Suppose we have fixed a function $f\le 1$, an $\alpha<1$, and suppose also that we have fixed a $q\in\mathfrak{Q}$, where $\mathfrak{Q}$ is the collection of Whitney cubes for $f$ at height $\alpha$. Let $f_q=\sum_{j=0}^Nf_q^{\gamma_j}$ be the decomposition of $f_q$ from Lemma \ref{decomplemma}. Fix a critical density $\gamma=\gamma_j$ for some $j$. Our algorithm will depend on a fixed parameter $M>0$ which can be viewed as the ``height" of the exceptional sets thrown away. The larger the value of $M$, the ``heavier" that $f_q^{\gamma}$ will be on the exceptional sets thrown away. 
\subsubsection*{Defining double-logarithmically many scales}
The scales that we define in this subsection are motivated by a decomposition of the kernel $\sigma_k\ast\sigma_k$ into linear combinations of characteristic functions of hyperrectangles, which will appear later in the paper. We first fix some $k$ such that 
\begin{align*}
2^k\ge\max(l(q), (\gamma\cdot\alpha^{-1})^{1/(d-1)}),
\end{align*} 
since these are the relevant values of $k$ in Proposition \ref{mainprop2}. Thus we assume that the parameters $k, \gamma, M$ are fixed. We begin by identifying $O(\Log^2(\alpha^{-1}))$ many natural scales in our problem that will lead us to our $L\,\Log^3 L$ result. The scales may be enumerated in increasing order as $\{c_i\}_{i=0}^N$, where 
\begin{align}\label{scale}
c_j=\max(\gamma^{2^{-j}/(d-1)}l(q)^{1-2^{-j}}, (\gamma^{1/(d-1)}\alpha^{-1})^{(1-2^{-j}/(d-1))}).
\end{align}
\indent
We now describe how these scales arise; they in fact arise from a relationship between the parameters $\gamma$ and $\alpha$ and the geometry of the sphere of radius $2^k$ that we have fixed.
\newline
\indent
The first scale would be the diameter of a spherical cap of thickness $\gamma^{1/(d-1)}$ on a sphere of radius $\max(l(q), (\gamma\cdot\alpha^{-1})^{1/(d-1)})$, which is 
\begin{align*}
\approx \max(\gamma^{1/2(d-1)}l(q)^{1/2}, \gamma^{1/(d-1)}\cdot\alpha^{-1/2(d-1)}).
\end{align*} 
The next scale would be the diameter of a spherical cap of thickness $$\max(\gamma^{1/2(d-1)}l(q)^{1/2}, \gamma^{1/(d-1)}\cdot\alpha^{-1/2(d-1)}))$$ on the sphere of radius $\max(l(q), (\gamma\cdot\alpha^{-1})^{1/(d-1)})$. Continuing in this manner, letting each scale be the diameter of a spherical cap on the sphere of radius $\max(l(q), (\gamma\cdot\alpha^{-1})^{1/(d-1)})$ whose thickness is the previous scale, the $j^{\text{th}}$ scale would be $\max(\gamma^{2^{-j}/(d-1)}l(q)^{1-2^{-j}}, (\gamma^{1/(d-1)}\alpha^{-1})^{(1-2^{-j}/(d-1))})$. We stop when 
\begin{multline*}
c_j=\max(\gamma^{2^{-j}/(d-1)}l(q)^{1-2^{-j}}, (\gamma^{1/(d-1)}\alpha^{-1})^{(1-2^{-j}/(d-1))})
\\
\ge 2^{-10}\max(l(q), (\gamma\cdot\alpha^{-1})^{1/(d-1)}).
\end{multline*} 
Since $\gamma>\alpha^2l(q)^{d-1}$, this will happen before $j=10\lceil{\log(\log(\alpha^{-1}))}\rceil$. We enumerate our scales as $\{c_i\}_{i=0}^N$
in increasing order, where each $c_j$ is given by (\ref{scale}).

\begin{figure}
\begin{tikzpicture}[scale=0.7]

\draw[thick] (0,0) circle (4cm);
\draw[blue] (-0.4, 3.92) rectangle (.4, 4.12) node[anchor=east] {\small $c_0$\,\,\,\,\,\,\,\,\,\,\,\,} node[anchor=south] {\small $c_1$\,\,\,\,\,\,\,\,\,\,\,\,}
node[anchor=west] {\,\,\,\,\,\small$c_1$} ; 

\draw[blue, rotate=-35] (-1, 3.6) rectangle (0.8, 4.4) node[anchor=north east ] {\small $c_2$\,\,\,\,\,\,\,\,\,\,\,\,};
\draw[blue, rotate=-90] (-2, 3) rectangle (2, 4.8) node[anchor=north] {\small $c_2$\,\,\,\,\,\,\,\,\,\,\,\,\,\,\,\,\,\,\,\,} ;
\filldraw[blue] (4, 0) circle (0pt) node[anchor=west] {\small \,\,\,\,\,\,\,\,\,\,$c_3$};

\end{tikzpicture}
\caption{The scales $c_j$. Here $c_0=\gamma$ and the circle has radius $2^k$ for some $k$ with $2^k\ge \max(l(q), (\gamma\cdot\alpha^{-1})^{1/(d-1)})$.}\label{fig1}
\end{figure}
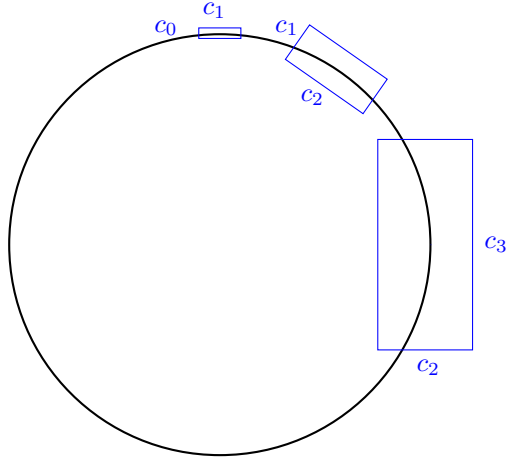

\subsubsection*{Throwing away exceptional sets at each scale}
First, we give some definitions and notation that lead up to our definition of the exceptional sets. We assume in what follows that we are in the situation described in the subsequent remark.
\begin{remark}\label{remarkassume}
In what follows, we assume that we have fixed a function $f\le 1$, an $\alpha<1$, and also fixed $q\in\mathfrak{Q}$, where $\mathfrak{Q}$ is the collection of Whitney cubes for $f$ at height $\alpha$. Assume we have decomposed $f_q=\sum_{j=0}^Nf_q^{\gamma_j}$ as in Lemma \ref{decomplemma}, and fixed some critical density $\gamma=\gamma_j$ for some $j>0$. Finally, assume we have fixed some $k$ such that
\begin{align*}
2^k\ge \max(l(q), (\gamma\cdot\alpha^{-1})^{1/(d-1)}).
\end{align*}
Lastly, assume that we have also fixed some parameter $M>0$ which we will refer to as a ``height parameter" for the collection of sets we will subsequently define. Define the scales $\{c_j\}_{j=0}^N$ as in (\ref{scale}), with $N$ chosen to be the smallest number so that 
\begin{align*}
c_N\ge 2^{-10}\max(l(q), (\gamma\cdot\alpha^{-1})^{1/(d-1)}).
\end{align*}
(As noted earlier, this implies $N\le 15\log(\log(\alpha^{-1}))$.)
\end{remark}
\begin{definition}\label{defdir}
Assume we are in the situation of Remark \ref{remarkassume} and have already fixed the parameters mentioned in that remark. For $1\le i\le N$, define $\{\Phi_{j, i}\}_j$ to be some set of $\approx (c_i/c_{i-1})^{d-1}$ essentially  equally spaced directions in $S^{d-1}$. 
\end{definition} 
\begin{definition}\label{defrect}
Assume we are in the situation of Remark \ref{remarkassume} and have already fixed the parameters mentioned in that remark. For a fixed $i$ with $1\le i\le N$, consider the collection $\{R\}$ of all $c_i\times c_i\times\cdots\times c_i\times c_{i-1}$ hyperrectangles that belong to some fixed grid (say, the grid centered at the origin) parallel and orthogonal to the direction $\Phi_{j, i}$. Define $\mathcal{R}_{j, i}$ to be the collection of all such rectangles $R$ such that
\begin{align}\label{hea}
\int_R|f_q^{\gamma}|\ge c_{i-1}M\gamma.
\end{align}
\end{definition} 
\begin{definition}\label{defsets}
Assume we are in the situation of Remark \ref{remarkassume} and have already fixed the parameters mentioned in that lemma. For $1\le i\le N$, define $\sigma_{k, j, i}$ to be the surface measure of a spherical cap on the sphere of radius $2^k$ of angular width $100c_{i-1}/c_i$ with some normal in direction $\Phi_{j, i}$, normalized so that $\Norm{\sigma_{k, j, i}}_1\lesssim c_{i-1}/c_i$. Define the set $A_{j, i}$ as
\begin{align*}
A_{j, i}:=\text{supp}\bigg(\sigma_{k, j, i}\ast\chi_{\bigcup_{R\in\mathcal{R}_{j, i}}}100R\bigg).
\end{align*}
\end{definition}

\begin{definition}\label{defexset}
Assume we are in the situation of Remark \ref{remarkassume} and have already fixed the parameters mentioned in that lemma. Set
\begin{align*}
S_{M, k, q, \gamma, i}:=\bigcup_jA_{j, i}
\end{align*}
We define our exceptional set as
\begin{align*}
S_{M, k, q, \gamma}:=\bigcup_{i=1}^NS_{M, k, q, \gamma, i}.
\end{align*}
\end{definition}

We claim the following upper bound on the size of the exceptional sets defined above.

\begin{lemma}\label{sizelemma}
In light of Definitions \ref{defdir}, \ref{defrect}, \ref{defsets} and \ref{defexset}, we have the bound
\begin{align*}
|S_{M, k, q, \gamma}|\lesssim M^{-1}2^{k(d-1)}\cdot\lambda(f_q^{\gamma}).
\end{align*}
\end{lemma}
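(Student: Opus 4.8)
The plan is to estimate $|S_{M,k,q,\gamma}|$ by summing the measures of the individual sets $A_{j,i}$ over the $N = O(\Log^2(\alpha^{-1}))$ scales and over the directions $j$ at each scale, and then to observe that the sum over $i$ telescopes (or is dominated by a geometric sum) so that only the top scale $c_N \approx 2^k$ matters, up to constants. First I would fix $i$ and estimate $|A_{j,i}|$ for a single direction. Since $A_{j,i} = \text{supp}(\sigma_{k,j,i} \ast \chi_{\bigcup_{R \in \mathcal{R}_{j,i}} 100R})$ and $\sigma_{k,j,i}$ is a cap of angular width $\approx c_{i-1}/c_i$ on the sphere of radius $2^k$, its support is contained in a slab of thickness $\approx 2^k (c_{i-1}/c_i)^2$ — wait, more precisely the cap itself has diameter $\approx 2^k \cdot c_{i-1}/c_i$ in the $d-1$ tangential directions and "height" $\approx 2^k (c_{i-1}/c_i)^2$; after convolving with $\chi_{100R}$ for hyperrectangles $R$ of dimensions $c_i \times \cdots \times c_i \times c_{i-1}$ oriented so the short side $c_{i-1}$ points along $\Phi_{j,i}$, the resulting support around each $R$ fattens by the cap's dimensions. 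The key point, as stated in the introduction, is that the short side $c_{i-1}$ of $R$ is comparable to (indeed larger than, up to constants) the height $2^k(c_{i-1}/c_i)^2$ of the cap, because $2^k \lesssim c_i^2 / c_{i-1}$ is exactly the relation $c_i \gtrsim (c_{i-1} \cdot 2^k)^{1/2}$ built into the definition (\ref{scale}) of the scales (each $c_i$ is the diameter of a cap of thickness $c_{i-1}$ on the sphere of radius $\approx 2^k$). Hence $\text{supp}(\sigma_{k,j,i} \ast \chi_{100R})$ is contained in a fixed dilate of $R$ fattened by $\approx c_i$ in the long directions, so it has measure $\lesssim c_i^{d-1} c_{i-1}$ — actually $\lesssim c_i^{d}$ after fattening — times a constant. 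Summing over $R \in \mathcal{R}_{j,i}$ and using the mass lower bound (\ref{hea}), namely $\int_R |f_q^\gamma| \ge c_{i-1} M \gamma$, the number of such $R$ is $\lesssim (c_{i-1} M \gamma)^{-1} \int |f_q^\gamma|$, giving
\begin{align*}
|A_{j,i}| \lesssim \frac{c_i^{d-1} c_{i-1}}{c_{i-1} M \gamma} \int_{\bigcup_R R} |f_q^\gamma| \lesssim \frac{c_i^{d-1}}{M\gamma}\,\big\|f_q^\gamma \chi_{\bigcup_{R\in\mathcal{R}_{j,i}} R}\big\|_1.
\end{align*}

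Next I would sum over the directions $j$. By Definition \ref{defdir} there are $\approx (c_i/c_{i-1})^{d-1}$ directions $\Phi_{j,i}$, and for a fixed grid orientation the hyperrectangles $R \in \mathcal{R}_{j,i}$ are disjoint, so $\sum_j \|f_q^\gamma \chi_{\bigcup_{R\in\mathcal{R}_{j,i}} R}\|_1 \lesssim (c_i/c_{i-1})^{d-1} \|f_q^\gamma\|_1$; but this loses a factor $(c_i/c_{i-1})^{d-1}$ which is too much. The better route is to note that the relevant bound should be phrased in terms of $\lambda(f_q^\gamma)$: using (\ref{width3}) in the form $\int_R |f_q^\gamma| \lesssim \gamma \cdot (\text{longest side of } R) = \gamma c_i$ together with the lower bound $\ge c_{i-1}M\gamma$, the cube count per direction is controlled, and summing the measures $|\bigcup_j A_{j,i}|$ I expect to get $|S_{M,k,q,\gamma,i}| \lesssim M^{-1} c_i^{d-1} \lambda(f_q^\gamma) \cdot (\text{something} \le 1)$, or more honestly $|S_{M,k,q,\gamma,i}| \lesssim M^{-1} 2^{k(d-1)} \lambda(f_q^\gamma)$ directly for each $i$: one covers $\bigcup_R 100R$ by its length (total of the long sides) which is $\lesssim \lambda(f_q^\gamma) \cdot (c_i / \text{grain})$ — hmm. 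Let me instead take the cleanest path: for each $i$, the set $\bigcup_{R\in\mathcal{R}_{j,i}} 100R$ has its "$c_i$-length" (number of $R$'s times $c_i$) bounded, via (\ref{hea}) and $\int_R |f_q^\gamma| \lesssim \gamma c_i$ from (\ref{width3}), so that the count of $R$ is $\lesssim M^{-1} \cdot (\text{count of length-}c_i\text{ pieces of } E_q^\gamma)$, and the latter is $\lesssim \lambda(f_q^\gamma)/c_i$ since $\lambda$ is the sum of sidelengths of a maximal dyadic cover. Convolving with the cap $\sigma_{k,j,i}$ (which spreads each $R$ into a set of measure $\lesssim 2^{k(d-1)} \cdot c_i \cdot (c_{i-1}/c_i) \cdot \ldots$) and summing over $j$ and over the $R$'s yields $|S_{M,k,q,\gamma,i}| \lesssim M^{-1} 2^{k(d-1)} \lambda(f_q^\gamma)$ for \emph{each} $i$.

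Finally I would sum over $i = 1, \ldots, N$. Naively this loses a factor $N \approx \Log^2(\alpha^{-1})$, which is not acceptable. The remedy is that the scales $c_i$ are geometrically separated at the \emph{top end}: once $c_i$ approaches $2^{-10}\max(l(q), (\gamma\alpha^{-1})^{1/(d-1)}) \approx 2^k$, one is done, and the geometric structure $c_i \approx (c_{i-1} \cdot 2^k)^{1/2}$ means $2^k/c_i$ decays doubly-exponentially in $N - i$; correspondingly the per-scale bound carries an extra factor decaying geometrically in $i$, so $\sum_i |S_{M,k,q,\gamma,i}| \lesssim M^{-1} 2^{k(d-1)} \lambda(f_q^\gamma) \sum_i (\text{geometric}) \lesssim M^{-1} 2^{k(d-1)}\lambda(f_q^\gamma)$, which is the claim. \textbf{The main obstacle} I anticipate is precisely bookkeeping this last summation correctly: one has to track how the measure bound at scale $i$ depends on $c_i$ versus $2^k$ and verify that the dependence is summable rather than merely bounded by $N$ times the top term — this requires carefully using the defining relation (\ref{scale}) between consecutive scales and the radius $2^k$, and making sure the "fattening by $c_i$" of the hyperrectangles and the spreading by the cap measure combine to give a bound with a genuine geometric gain in $i$, not just $O(1)$ per scale. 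A secondary subtlety is the overlap between the sets $A_{j,i}$ for different $j$ at a fixed $i$: since the $\Phi_{j,i}$ are equally spaced and the rectangles in different directions need not be disjoint, one must argue that the directional caps have bounded overlap (which they do, being roughly a partition of the sphere at angular scale $c_{i-1}/c_i$) so that summing $\sum_j |A_{j,i}|$ rather than bounding $|\bigcup_j A_{j,i}|$ only costs a constant.
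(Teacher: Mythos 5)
There is a genuine gap at the final step, and it is exactly the step you flag as your ``main obstacle.'' Your per-scale computation is essentially right: for each fixed $i$, counting rectangles via the mass threshold (\ref{hea}) and multiplying by the measure $\lesssim 2^{k(d-1)}c_{i-1}^{d}/c_i^{d-1}$ of each fattened cap, then summing over the $\approx(c_i/c_{i-1})^{d-1}$ directions, gives $|S_{M,k,q,\gamma,i}|\lesssim M^{-1}2^{k(d-1)}\lambda(f_q^{\gamma})$ \emph{for each} $i$, with no dependence on $i$. But there is no geometric decay in $i$ to be found: every scale can saturate this bound, so the naive sum over the $N\approx\Log^2(\alpha^{-1})$ scales loses a factor of $\Log^2(\alpha^{-1})$, and the relation $c_i\approx(c_{i-1}2^k)^{1/2}$ does not rescue you --- it is already what makes the per-scale bounds all equal to the same quantity. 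The lemma as stated (without that loss) is what permits the choice of height parameter $M=2^{k(d-1)}\alpha\gamma^{-1}\Log^2(\alpha^{-1})$ later in the paper, so the gap matters.

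The paper's mechanism for avoiding the factor $N$ is a cross-scale disjointification rather than any decay. One introduces a parent relation on the direction sets: $\Phi_{j,n}$ is a parent of $\Phi_{j',n'}$ ($n>n'$) if it is a nearest level-$n$ direction. The key geometric observation is that if $(j',n')\lesssim(j,n)$ and $R\in\mathcal{R}_{j,n}$, $R'\in\mathcal{R}_{j',n'}$ intersect, then $R'\subset 100R$, so one may prune to subcollections $\wt{\mathcal{R}}_{j,n}$ that still cover $S_{M,k,q,\gamma}$ but are \emph{pairwise disjoint along every chain of parents} $\Phi_{j_1,1}\lesssim\cdots\lesssim\Phi_{j_N,N}$. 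Then the single quantity $\int|f_q^{\gamma}|$ dominates $\sum_{n}c_{n-1}M\gamma\,\mathrm{card}(\wt{\mathcal{R}}_{j_n,n})$ summed over \emph{all} scales of one chain at once, and reindexing the sum over $(j,n)$ as a sum over the $(c_1/c_0)^{d-1}$ level-$1$ directions (each $(j,n)$ being an ancestor of $\approx(\frac{c_{n-1}}{c_n}\cdot\frac{c_1}{c_0})^{d-1}$ of them) exactly converts the cap measure $2^{k(d-1)}c_{n-1}^d/c_n^{d-1}$ into the weight $c_{n-1}$ appearing in the chain mass bound. This yields $|S_{M,k,q,\gamma}|\lesssim 2^{k(d-1)}M^{-1}\gamma^{-1}\int|f_q^{\gamma}|\lesssim M^{-1}2^{k(d-1)}\lambda(f_q^{\gamma})$ with no loss in $N$. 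To repair your argument you would need to supply this (or an equivalent) cross-scale bookkeeping; the summation-in-$i$ step cannot be closed as written.
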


Before we proceed with the proof of Lemma \ref{sizelemma}, we say a few words to motivate the previous definitions. We note that in some sense the support of $\sigma_{k, j, i}$ is ``adapted" to translates of hyperrectangles in $\mathcal{R}_{j, i}$, in the sense that convolution with characteristic functions of hyperrectangles effectively fattens it by $c_{i-1}$ and translates it. Thus we note that for each $R\in\mathcal{R}_{j, i}$, the set $\text{supp}(\sigma_{k, j, i}\ast\chi_{100R})$ is contained in a $1000c_{i-1}$-neighborhood of a translate of the cap $\text{supp}(\sigma_{k, j, i})$. The hyperrectangles $R\in\mathcal{R}_{j, i}$ that are sufficiently ``heavy" in the sense of (\ref{hea}) correspond to (more or less) poor $L^2$ estimates for $\sigma_{k, j, i}\ast\chi_R$, and so we would like to remove the supports of $\sigma_{k, j, i}\ast\chi_R$. Since the support of this is essentially contained in a $c_{i-1}$-fattening of the cap $\text{supp}(\sigma_{k, j, i})$, the heavier the rectangles we consider (the larger $M$ is) the fewer number of such rectangles there can be, so the smaller the total size of exceptional sets thrown away. Thus using a pigeonholing argument, we can obtain the bound from Lemma \ref{sizelemma}.

\begin{proof}[Proof of Lemma \ref{sizelemma}]
For $1\le n\le N$, consider $S_{M, k, q, \gamma, n}$; as noted above, $S_{M, k, q, \gamma, n}$ is contained in a union of $1000c_{n-1}$-neighborhoods of translates of caps on the sphere of radius $2^k$ of angular width $100c_{n}/c_{n-1}$, each having a normal vector in some direction in $\{\Phi_{j, n}\}_j$.
\begin{definition}
For any $n, n^{\prime}$ with $n>n^{\prime}$, we will say that $\Phi_{j, n}$ is a \textit{parent} of $\Phi_{j^{\prime}, n^{\prime}}$ and write $(j^{\prime}, n^{\prime})\lesssim (j, n)$ if $|\Phi_{j, n}-\Phi_{j', n^{\prime}}|\le |\Phi_{j'', n}-\Phi_{j', n'}|$ for any $j''$, i.e. if $\Phi_{j, n}$ is a closest vector in the $n^{\text{th}}$ generation of vectors $\{\Phi_{j, n}\}$ to $\Phi_{j^{\prime}, n^{\prime}}$.
\end{definition} 
By definition, $S_{M, k, q, \gamma}$ is contained in a set of the form
\begin{align*}
\bigcup_{1\le n\le N}\bigcup_j\bigcup_{R\in\mathcal{R}_{j, n}}\text{supp}(\sigma_{k, j, n}\ast\chi_{1000R}),
\end{align*}
where each $R\in\mathcal{R}_{j, n}$ is a $c_n\times c_n\times\cdots\times c_n\times c_{n-1}$ hyperrectangle with short side pointing in direction $\Phi_{j, n}$ satisfying 
\begin{align}\label{rec}
\int_R|f_{q}^{\gamma}|\ge c_{n-1}M\gamma.
\end{align}
Moreover, if $(j', n')\lesssim (j, n)$ then for any $R\in\mathcal{R}_{j, n}$ and any $R'\in\mathcal{R}_{j', n'}$, if $R\cap R'\ne\emptyset$ then $R'\subset 100R$. It follows that we can choose subcollections $\wt{\mathcal{R}}_{j, n}\subset\mathcal{R}_{j, n}$ satisfying
\begin{align*}
S_{M, k, q, \gamma}\subset \bigcup_{1\le n\le N}\bigcup_j\bigcup_{R\in\wt{\mathcal{R}}_{j, n}}\text{supp}(\sigma_{k, j, n}\ast\chi_{10000R}),
\end{align*}
so that for given any direction $\Phi_{j_1, 1}$, a chain of parents
\begin{align}\label{par}
\Phi_{j_1, 1}\lesssim\Phi_{j_2, 2}\lesssim\ldots\lesssim\Phi_{j_N, N}
\end{align}
satisfies the property that the hyperrectangles in the collections $\wt{\mathcal{R}}_{j_n, n}$ are all pairwise disjoint.
\newline
\indent
Since $\sigma_{k, j, n}\ast\chi_{10000R}$ is essentially supported in a $c_{n-1}$-fattening of a the cap $\text{supp}(\sigma_{k, j, n}\ast\chi_{10000R})$, the measure of its support is $\lesssim 2^{k(d-1)}c_{n-1}^d/c_n^{d-1}$. We can thus bound
\begin{multline*}
|S_{M, k, q, \gamma}|\lesssim\bigg|\bigcup_{1\le n\le N}\bigcup_j\bigcup_{R\in\wt{\mathcal{R}}_{j, n}}\text{supp}(\sigma_{k, j, n}\ast\chi_{10000R})\bigg|
\\
\lesssim\sum_n\sum_j2^{k(d-1)}\frac{c_{n-1}^d}{c_n^{d-1}}\cdot \text{card}(\wt{\mathcal{R}}_{j, n}).
\end{multline*}
By the disjointness property mentioned above and (\ref{rec}), for a chain of parents as in (\ref{par}), we have the bound
\begin{align}\label{upbound}
\int|f_q^{\gamma}|\ge \sum_{1\le n\le N}\sum_{R\in\wt{\mathcal{R}}_{j_n, n}}\int_R|f_q^{\gamma}|\ge \sum_{1\le n\le N}c_{n-1}M\gamma\cdot\text{card}(\wt{\mathcal{R}}_{j_n, n}).
\end{align}
Since each direction $\Phi_{j, n}$ is the parent of $\lesssim(\frac{c_{n-1}}{c_n}\cdot\frac{c_1}{c_0})^{d-1}$ many directions $\Phi_{j', n'}$ with $n'=1$, it follows that
\begin{multline*}
|S_{M, k, q, \gamma}|\lesssim\sum_n\sum_j2^{k(d-1)}\frac{c_{n-1}^d}{c_n^{d-1}}\cdot \text{card}(\wt{\mathcal{R}}_{j, n})
\\
\lesssim 2^{k(d-1)}\sum_{j_1}\sum_{j, n:\, (j, n)\gtrsim (j_1, 1)}2^{k(d-1)}\frac{c_{n-1}^d}{c_n^{d-1}}\cdot (\frac{c_{n-1}}{c_n}\cdot\frac{c_1}{c_0})^{1-d}\cdot\text{card}(\wt{\mathcal{R}}_{j, n})
\\
\lesssim 2^{k(d-1)}\sum_{j_1}\sum_{j, n:\, (j, n)\gtrsim (j_1, 1)}c_{n-1}(\frac{c_0}{c_1})^{d-1}\cdot\text{card}(\wt{\mathcal{R}}_{j, n})
\\
\lesssim 2^{k(d-1)}(\frac{c_0}{c_1})^{d-1}\sum_{j_1}\int|f_q^{\gamma}|M^{-1}\gamma^{-1}\lesssim 2^{k(d-1)}\int|f_q^{\gamma}|M^{-1}\gamma^{-1}
\\
\lesssim M^{-1}2^{k(d-1)}\cdot\lambda(f_q^{\gamma}),
\end{multline*}
where in the penultimate line we have used (\ref{upbound}) and the fact that there are $O((\frac{c_1}{c_0})^{d-1})$ many values of $j_1$ to sum over, and in the last line we have used (\ref{width1}).
\end{proof}

\subsection*{Choice of the height parameter for the exceptional sets}
Suppose we are in the situation of Remark \ref{remarkassume}, with all relevant parameters fixed as in the remark except for $k$. First we define three different regimes in which $k$ can live. Let $\mathcal{K}_1\subset\mathbb{Z}$ be the set of all $k$ satisfying
\begin{align*}
 0\le k(d-1)<\max(\log(l(q))(d-1), \log(\gamma\alpha^{-1})).
 \end{align*}
Let $\mathcal{K}_2\subset\mathbb{Z}$ be the set of all $k$ satisfying
\begin{multline*}
\max(\log(l(q))(d-1), \log(\gamma\alpha^{-1}))\le k(d-1)
\\
\le\log(\gamma\alpha^{-1})+100\log(\log(\alpha^{-1})).
\end{multline*}
Let $\mathcal{K}_3\subset\mathbb{Z}$ be the set of all $k\notin\mathcal{K}_2$ satisfying
\begin{align*}
k(d-1)>\max(\log(l(q))(d-1), \log(\gamma\alpha^{-1})+100\log(\log(\alpha^{-1}))),
\end{align*}
so that we may write $\mathbb{N}=\mathcal{K}_1\sqcup\mathcal{K}_2\sqcup\mathcal{K}_3$. For each $k$, we define the set $A_{q, k, \gamma}$ by

\[A_{q, k, \gamma}:= \begin{cases} 
      \text{supp}(\sigma_k\ast f_q^{\gamma}) & k\in\mathcal{K}_1 \\
      S_{2^{k(d-1)}\alpha\gamma^{-1}\log(\log(\alpha^{-1})), k, q, \gamma} & k\in\mathcal{K}_2 \\
      \emptyset & k\in\mathcal{K}_3. 
   \end{cases}
\]

\begin{lemma}
We have the bound
\begin{align*}
\bigcup_{k, q, \gamma}A_{k, q, \gamma}\lesssim\alpha^{-1}\Norm{f_q}_1.
\end{align*}
\end{lemma}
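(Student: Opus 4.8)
The plan is to bound the measure of $\bigcup_{k,q,\gamma} A_{k,q,\gamma}$ by summing over the three regimes $\mathcal{K}_1$, $\mathcal{K}_2$, $\mathcal{K}_3$ separately, for each fixed Whitney cube $q$ and critical density $\gamma$, and then summing over $\gamma$ and $q$. The regime $\mathcal{K}_3$ contributes nothing since $A_{q,k,\gamma}=\emptyset$ there, so the whole point is to control $\mathcal{K}_1$ and $\mathcal{K}_2$.

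For $k \in \mathcal{K}_1$, I would use the support estimate already recorded in \eqref{suppess}: the condition $k(d-1) < \max(\log(l(q))(d-1), \log(\gamma\alpha^{-1}))$ means either $2^k \le 2^{10} l(q)$ (in which case we have thrown away at most $\sum_q |q| \lesssim \alpha^{-1}|E|$, cf.\ the reduction before Proposition \ref{mainprop}), or $k(d-1) < \log(\gamma\alpha^{-1})$, in which case $|\text{supp}(\sigma_k \ast f_q^\gamma)| \lesssim 2^{k(d-1)} \lambda(f_q^\gamma)$ and summing the geometric series in $k$ gives $\lesssim \alpha^{-1}\gamma\,\lambda(f_q^\gamma) \lesssim \alpha^{-1}\Norm{f_q^\gamma}_1$ by \eqref{width1}; summing over $\gamma$ then gives $\lesssim \alpha^{-1}\Norm{f_q}_1$. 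This is essentially a repackaging of \eqref{suppess} together with the very first reduction.

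For $k \in \mathcal{K}_2$ I would invoke Lemma \ref{sizelemma} with the specific height parameter $M = M_k := 2^{k(d-1)}\alpha\gamma^{-1}\log(\log(\alpha^{-1}))$, which gives
\begin{align*}
|A_{q,k,\gamma}| = |S_{M_k, k, q, \gamma}| \lesssim M_k^{-1} 2^{k(d-1)} \lambda(f_q^\gamma) = \frac{\gamma}{\alpha \log(\log(\alpha^{-1}))}\,\lambda(f_q^\gamma).
\end{align*}
Now I sum over $k \in \mathcal{K}_2$: the key observation is that $\mathcal{K}_2$ contains only $O(\log(\log(\alpha^{-1})))$ values of $k$ (its length in $k(d-1)$ is $100\log(\log(\alpha^{-1}))$), so the sum over $k \in \mathcal{K}_2$ of the above bound is $\lesssim \gamma\alpha^{-1}\lambda(f_q^\gamma) \lesssim \alpha^{-1}\Norm{f_q^\gamma}_1$ by \eqref{width1}. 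Summing over the critical densities $\gamma$ using \eqref{decomp} gives $\lesssim \alpha^{-1}\Norm{f_q}_1$. (One should double-check that $M_k > 0$, i.e.\ that the hypotheses of Lemma \ref{sizelemma} and Remark \ref{remarkassume} are met — this holds since $k \in \mathcal{K}_2$ forces $2^k \ge \max(l(q), (\gamma\alpha^{-1})^{1/(d-1)})$, which is exactly the standing assumption in Remark \ref{remarkassume}.)

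The main obstacle, such as it is, is the bookkeeping: making sure the cancellation between the $M_k^{-1}$ factor from Lemma \ref{sizelemma} and the cardinality $|\mathcal{K}_2| \lesssim \log(\log(\alpha^{-1}))$ is exact, i.e.\ that the extra $\log(\log(\alpha^{-1}))$ in the definition of $M_k$ precisely cancels the number of scales summed over — this is the whole reason $\log(\log(\alpha^{-1}))$ appears in the height parameter. Everything else is a direct application of the already-proven support estimate \eqref{suppess}, the length identity \eqref{width1}, and Lemma \ref{sizelemma}, followed by summing the decomposition \eqref{decomp} over $\gamma$ and the Whitney cubes over $q$. I would also remark that the statement as written, $\bigcup_{k,q,\gamma} A_{k,q,\gamma} \lesssim \alpha^{-1}\Norm{f_q}_1$, presumably means $\big|\bigcup_{k,\gamma} A_{k,q,\gamma}\big| \lesssim \alpha^{-1}\Norm{f_q}_1$ for each fixed $q$ (or, after summing in $q$, $\lesssim \alpha^{-1}\Norm{f}_1$), and I would phrase the proof accordingly.
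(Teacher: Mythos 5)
Your proposal is correct and follows essentially the same route as the paper: $\mathcal{K}_3$ is trivial, $\mathcal{K}_1$ is handled by the support estimates $2^{k(d-1)}l(q)$ and $2^{k(d-1)}\lambda(f_q^{\gamma})$ summed geometrically in $k$ together with (\ref{width1}), and $\mathcal{K}_2$ by Lemma \ref{sizelemma} with the height parameter $M=2^{k(d-1)}\alpha\gamma^{-1}\log(\log(\alpha^{-1}))$, whose $\log(\log(\alpha^{-1}))$ factor exactly offsets $|\mathcal{K}_2|\lesssim\Log^2(\alpha^{-1})$. Your closing remark about the intended reading of the statement (a measure bound, summed over $\gamma$ and $k$ for fixed $q$) matches how the paper uses it.
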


\begin{proof}
For $k\in\mathcal{K}_1$, with $k(d-1)\le\log(l(q))$, we have
\begin{align*}
|A_{q, k, \gamma}|\lesssim 2^{k(d-1)}l(q),
\end{align*}
and summing over $k$ gives the desired bound.
For $k\in\mathcal{K}_1$, with $k(d-1)\le\log(\gamma\alpha^{-1})$, we have
\begin{align*}
|A_{q, k, \gamma}|\lesssim 2^{k(d-1)}\lambda(f_q^{\gamma}).
\end{align*}
and summing over $k$ gives the desired bound, since for the largest such $k$ we have
\begin{align*}
2^{k(d-1)}\lambda(f_q^{\gamma})\lesssim\alpha^{-1}\gamma\lambda(f_q^{\gamma})\lesssim\alpha^{-1}\Norm{f_q^{\gamma}}_1
\end{align*}
by (\ref{width1}). Thus we have shown
\begin{align*}
\bigcup_{k\in\mathcal{K}_1, q, \gamma}|A_{k, q, \gamma}|\lesssim\alpha^{-1}\Norm{f_q}_1.
\end{align*}
It remains to consider $\mathcal{K}_2$. By Lemma \ref{sizelemma} and (\ref{width1}), we have
\begin{align*}
|S_{2^{k(d-1)}\alpha\gamma^{-1}\log(\log(\alpha^{-1})), k, q, \gamma}|\lesssim \alpha^{-1}(\Log^2(\alpha^{-1}))^{-1}\gamma\cdot\lambda(f_q^{\gamma})
\\ \lesssim\alpha^{-1}(\Log^2(\alpha^{-1}))^{-1}\Norm{f_q^{\gamma}}_1.
\end{align*}
Summing over $\gamma$ and $k\in\mathcal{K}_2$ and using the fact that $|\mathcal{K}_2|\lesssim\Log^2(\alpha^{-1})$ gives the desired bound.
\end{proof}
After throwing away the exceptional set $\bigcup_{k, q, \gamma}A_{k, q, \gamma}$, we have reduced Proposition \ref{mainprop2} to proving the following proposition.
\begin{proposition}\label{mainprop3}
Let $f\le 1$ and let $\alpha<1$. Let $f_q=f\chi_q$ for each $q\in\mathfrak{Q}$, where $\mathfrak{Q}$ is the collection of Whitney cubes for $f$ at height $\alpha$. Decompose
\begin{align*}
f_q=\sum_{j=0}^Nf_q^{\gamma_j}.
\end{align*}
as in Lemma \ref{decomplemma}. We have the inequality
\begin{multline}\label{mainineq3}
|\{x:\,\bigg(\sum_q\sum_{k:\,2^k>l(q)}\big|\sum_{\gamma:\, \log(\alpha^{-1}\gamma)\le k(d-1)\le \log(\alpha^{-1}\gamma)+100\Log^2(\alpha^{-1})}\sigma_k\ast{f_q^{\gamma}}\big|^2\bigg)^{1/2}
\\
>\alpha\}|\cap \bigg(\bigcup_{q, k, \gamma}A_{q, k, \gamma}\bigg)^c\lesssim\alpha^{-1}Log^3(\alpha^{-1})\Norm{f}_1.
\end{multline}
\end{proposition}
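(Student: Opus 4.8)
The plan is to fix $q$ and $\gamma$ and, for each relevant scale $k$ (i.e. $k \in \mathcal{K}_2$, since the contributions of $\mathcal{K}_1$ and $\mathcal{K}_3$ have already been discarded), to decompose $\sigma_k \ast f_q^\gamma = g_{1,k,q,\gamma} + g_{2,k,q,\gamma}$ using the scales $\{c_i\}_{i=0}^N$ and the exceptional sets $S_{M,k,q,\gamma,i}$ from Definitions \ref{defsets}--\ref{defexset}, with the height parameter $M = 2^{k(d-1)}\alpha\gamma^{-1}\log(\log(\alpha^{-1}))$ as chosen above. The heart of the matter is to decompose the kernel $\sigma_k \ast \sigma_k$ into $O(N)$ pieces, each essentially a normalized characteristic function of a $c_i \times \cdots \times c_i \times c_{i-1}$ hyperrectangle oriented in one of the directions $\Phi_{j,i}$ — this is the ``hyperrectangular cap'' decomposition alluded to in the introduction and is dictated precisely by the scale relation \eqref{scale}. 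For a fixed orientation, convolving such a kernel piece with $f_q^\gamma$ restricted to a hyperrectangle $R$ gives an $L^2$ bound governed by $\int_R |f_q^\gamma|$: rectangles that are heavy in the sense of \eqref{hea} give bad $L^2$ estimates but their contributions live inside $S_{M,k,q,\gamma,i}$, which we have thrown away; rectangles that are light give the good $L^2$ bound. So I would set $g_{2,k,q,\gamma}$ to be the part of $\sigma_k \ast f_q^\gamma$ coming from the heavy rectangles (at all scales $i$) and $g_{1,k,q,\gamma}$ the rest.

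For the light part, the target is the $L^2$ estimate
\[
\sum_q \sum_{k \in \mathcal{K}_2}\sum_\gamma \Norm{g_{1,k,q,\gamma}}_{L^2((\bigcup A_{q,k,\gamma})^c)}^2 \lesssim \alpha \Norm{f}_1,
\]
after which Chebyshev and the fact that $|\mathcal{K}_2| \lesssim \Log^2(\alpha^{-1})$ (together with Cauchy--Schwarz to pass from the inner sum over $\gamma$ to $\ell^2$, as in \eqref{2ess3}) close the $L^{1,\infty}$ bound with the loss of $\Log^2(\alpha^{-1})$ — one logarithm better than the naive count of $O(\Log^2)$ caps would give, because the heavy scales have been removed. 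Concretely, for each scale $i$ the light-rectangle piece of $\sigma_{k,j,i} \ast (f_q^\gamma \chi_R)$ has $\Norm{\cdot}_2^2 \lesssim (c_{i-1}/c_i)\cdot (\text{density bound from } \int_R|f_q^\gamma| < c_{i-1}M\gamma) \cdot 2^{-k(d-1)}(\text{something})$; summing over the $\lesssim (c_i/c_{i-1})^{d-1}$ directions $\Phi_{j,i}$, over the disjoint rectangles $R$ in each grid, over $i$, and plugging in $M$ and \eqref{width1}, the geometric-in-$i$ sum telescopes and one is left with $\lesssim \alpha \log(\log(\alpha^{-1}))^{-c}\Norm{f_q^\gamma}_1$ at each $k$, which beats the budget. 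The parallel ``heavy scales'' improvement — that the support-size estimate for $g_{2}$-type terms also gains a $\Log^2$ factor — is what lets one sum the $g_{2}$ pieces in $L^1$; see below.

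For the heavy part $g_{2}$, the goal is $\Norm{\sum_q \sum_{k \in \mathcal{K}_2} g_{2,k,q,\gamma}}_1 \lesssim \log(\log(\log(\alpha^{-1})))\Norm{f}_1$, i.e. an $L^1$ bound with only a \emph{triple}-logarithmic loss. The point is that on heavy rectangles we do not use $L^2$ at all but simply estimate $\Norm{\sigma_{k,j,i} \ast (f_q^\gamma\chi_R)}_1 \le \Norm{\sigma_{k,j,i}}_1 \Norm{f_q^\gamma \chi_R}_1 \lesssim (c_{i-1}/c_i)\int_R |f_q^\gamma|$; summing over the disjoint $R$'s at scale $i$ gives $\lesssim (c_{i-1}/c_i)\Norm{f_q^\gamma}_1$, and summing over the $\lesssim N \lesssim \log(\log(\alpha^{-1}))$ scales $i$ and over $k \in \mathcal{K}_2$ naively costs two more logs — but the chain-of-parents disjointness structure from the proof of Lemma \ref{sizelemma}, combined with the fact that heaviness forces a rectangle to contain a definite fraction of the total mass $\int_R |f_q^\gamma| \gtrsim c_{i-1}M\gamma$ and hence there are very few heavy rectangles, collapses the double sum in $i$ (and the sum in $k$) to a single logarithm, leaving the claimed $\log\log\log$. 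Then $\sum_\gamma$ and $\sum_q$ are harmless by \eqref{width1} and disjointness of the Whitney cubes.

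I expect the main obstacle to be the kernel decomposition and the bookkeeping that matches each piece of $\sigma_k \ast \sigma_k$ to exactly the right scale $c_i$ and orientation family $\{\Phi_{j,i}\}$ — in particular verifying that for \emph{every} admissible $k$ (not just the minimal one $2^k = \max(l(q),(\gamma\alpha^{-1})^{1/(d-1)})$) the caps of angular width $100c_{i-1}/c_i$ genuinely fatten hyperrectangles by $\approx c_{i-1}$ and translate them, so that the support containments underlying both Lemma \ref{sizelemma} and the $L^2$/$L^1$ splitting actually hold with uniform constants. The telescoping of the geometric sums in $i$ is robust once the individual-scale estimates are in place; the delicate point is the simultaneous improvement — gaining one log on the light side and collapsing the scale count on the heavy side — which is exactly where the choice $M = 2^{k(d-1)}\alpha\gamma^{-1}\log\log(\alpha^{-1})$ and the range $k \in \mathcal{K}_2$ are used in tandem.
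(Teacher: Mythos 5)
Your proposal has the right ingredients in outline (exceptional sets built from heavy hyperrectangles, an $L^2$ estimate for a light part, an $L^1$ mechanism responsible for the triple logarithm), but it misses the central structural point of the paper's argument: there are \emph{two} distinct height thresholds, not one, and the band of heights between them is handled by a third mechanism. You split $\sigma_k\ast f_q^{\gamma}$ into two pieces using the single threshold $M=2^{k(d-1)}\alpha\gamma^{-1}\Log^2(\alpha^{-1})$ from (\ref{hea}). This cannot close. After multiplying by the kernel normalization $2^{-k(d-1)}c_i^{-(d-1)}c_{i-1}^{d-2}$ from (\ref{domdec}) and summing over the $\approx(c_i/c_{i-1})^{d-1}$ rectangles at scale $i$, each scale contributes the \emph{same} amount $\approx 2^{-k(d-1)}\cdot(\text{threshold})\cdot\gamma\,\Norm{f_q^{\gamma}}_1$ to $\Norm{g_1}_2^2$ --- the sum over $i$ is constant, not geometric, so nothing ``telescopes.'' With threshold $M$ this is $\alpha\,\Log^2(\alpha^{-1})\Norm{f_q^{\gamma}}_1$ per scale; summing over the $N\approx\Log^2(\alpha^{-1})$ scales, over $k\in\mathcal{K}_2$, and (via Cauchy--Schwarz) over the $\lesssim\Log^2(\alpha^{-1})$ values of $\gamma$ per $k$ loses $(\Log^2(\alpha^{-1}))^4$ rather than staying at $\alpha\Norm{f}_1$ --- and note that a loss of even one factor of $\Log^2(\alpha^{-1})$ already exceeds the allowed $\Log^3(\alpha^{-1})$. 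Lowering $M$ to repair this makes $|S_{M,k,q,\gamma}|\approx M^{-1}2^{k(d-1)}\lambda(f_q^{\gamma})$ too large to sum over $k\in\mathcal{K}_2$. The paper resolves the tension by using the lower threshold $2^{m(k,q,\gamma)}=2^{k(d-1)}\alpha\gamma^{-1}(\Log^2(\alpha^{-1}))^{-100}$ for the $L^2$ part and $M$ for the exceptional set, and the heights in between --- a band of width $\approx\Log^3(\alpha^{-1})$ in the exponent --- are the ``intermediate scales'' of (\ref{compdec}).

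Relatedly, your $L^1$ argument is aimed at the wrong object and uses the wrong mechanism. The part coming from rectangles heavy at level $M$ is already supported in $\bigcup A_{q,k,\gamma}$ (Remark \ref{penmark}), so it needs no $L^1$ bound; what must be summed in $L^1$ is the intermediate band. Your estimate $\Norm{\sigma_{k,j,i}\ast(f_q^{\gamma}\chi_R)}_1\lesssim(c_{i-1}/c_i)\Norm{f_q^{\gamma}\chi_R}_1$, summed over the $\approx(c_i/c_{i-1})^{d-1}$ directions $j$, already costs $(c_i/c_{i-1})^{d-2}$, and the assertion that ``chain-of-parents disjointness'' collapses the sums over $i$ and $k$ to a triple logarithm is not substantiated --- collapsing two $\Log^2$ factors to ``a single logarithm'' would in any case leave $\Log^2$, not $\Log^3$. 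The paper's Lemma \ref{1lem} works differently: it telescopes in the \emph{height} parameter, writing the intermediate part as $\sum_j\sum_{k}(g_k^{m+j,\gamma}-g_k^{m+j+1,\gamma})$; for each fixed offset $j$ the consecutive-height differences of the cap measures have disjoint angular supports as $k$ varies, so the $k$-sum is $\lesssim\Norm{f_q^{\gamma}}_1$, and the triple logarithm is exactly the number of offsets $j$ in the band. Finally, the $L^2$ estimate for the light part is not simply ``light rectangles give good $L^2$ bounds'': one must show (Lemma \ref{2endlemma}) that the autocorrelation $\sigma_k\ast(\sigma_k-\sigma_{k,\omega_l}^{m,\gamma})$ actually vanishes on the heavy portion of each hyperrectangle, which is the geometric crux of Proposition \ref{mainprop4} and is absent from your sketch.
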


We proceed with the proof of Proposition \ref{mainprop2} by a combination of $L^1$ and $L^2$ techniques. First, we determine to which parts of the functions $\sigma_k\ast f_q^{\gamma}$ we will applying $L^1$ techniques and to which we will apply $L^2$ techniques.

\subsection*{A decomposition of $\sigma_k\ast{f_q^{\gamma}}$}
Again, in what follows assume that we are in the setup of Remark \ref{remarkassume}, except that we will not fix the parameter $M$ and instead let it vary. Set $i=\floor{\log(M)}$. Recall that $f$ is granular with grain size $\delta$ for some small number $\delta>0$, i.e. that $f=\sum_lc_l\chi_{\omega_l}$ where each $\omega_l$ is a $\delta$-grain, i.e. a cube of sidelength $\delta$. We now associate a natural spherical measure to each $\delta$-grain $\omega_l$, defined so that it is supported on those caps where there exists a ``heavy" rectangle containing $\omega_l$ with short side essentially pointing in the direction normal to the corresponding cap.
\begin{definition}
For each $\delta$-grain $\omega_l$ and for a given $i$, define $\sigma_{k, \omega_l}^{i, \gamma}$ to be the surface measure on
\begin{align*}
\bigcup_{(n, j):\,\exists R\in\mathcal{R}_{j, n, n-1}(\omega_l\cap R\ne\emptyset)}\text{supp}(\sigma_{k, j, n}),
\end{align*}
where $n$ ranges over $1\le n\le N$, normalized so that the total measure is $2^{k(d-1)}\Theta$, where $\Theta$ is the total angle subtended by the spherical caps in the support of $\sigma_{k, \omega_l}^{i, \gamma}$.
\end{definition}
Recall that the parameter $i$ corresponds to the ``height", in a sense, of the spherical measure $\sigma_{k, \omega_l, i, \gamma}$. We now decompose the function $\sigma_k\ast f_q^{\gamma}$ into different ``heights", and identify a critical height below which we are able to obtain efficient $L^2$ estimates.
\begin{definition}
We identify the critical height $2^{m(k, q, \gamma)}$ by defining
\begin{align*}
m(k, q, \gamma):=k(d-1)-\log(\gamma\alpha^{-1})-\lceil{100\log(\log(\log(\alpha^{-1})))}\rceil.
\end{align*} 
For a given height $2^i$, define the ``projection of $\sigma_k\ast f_q^{\gamma}$ onto height $2^i$" as
\begin{align*}
g_k^{i, \gamma}=\sum_{\delta-\text{cubes }\omega_l}\sigma_{k, \omega_l}^{i, \gamma}\ast{(f_q^{\gamma}\chi_{\omega_l})}.
\end{align*}
\end{definition}
With these definitions in mind, we have the decomposition
\begin{align}\label{compdec}
\sigma_k\ast {f_q^{\gamma}}=\sigma_k\ast{f_q^{\gamma}}-g_k^{m(k, q, \gamma), \gamma}+\sum_{i\ge m(k, q, \gamma)}(g_k^{i, \gamma}-g_k^{i+1, \gamma}).
\end{align}
As previously mentioned, we will see that we have efficient (even when summing over $\gamma$ and over the relevant range of $k$) $L^2$ estimates for the term $\sigma_k\ast f_q^{\gamma}-g_k^{m(k, q, \gamma), \gamma}$. This term represents the ``projection of $\sigma_k\ast f_q^{\gamma}$ onto low heights."

\subsection*{Discarding the heavy terms via exceptional sets}
Recall that the relevant range of $k$ in Proposition \ref{mainprop3} is $\mathcal{K}_2$, given by
\begin{align}\label{k2range}
\max(\log(l(q))(d-1), \log(\alpha^{-1}\gamma))\le k(d-1)\le\log(\alpha^{-1}\gamma)+100\Log^2(\alpha^{-1}).
\end{align} 
Now, note that $\sum_{i\ge k(d-1)-\log(\gamma\alpha^{-1})+\log(\log(\log(\alpha^{-1})))}(g_k^{i, \gamma}-g_k^{i+1, \gamma})$ is supported in the exceptional set $\bigcup_{q, k, \gamma}A_{q, k, \gamma}$. Indeed, for all such $\delta$-grains $\omega_l$ which appear in the expression defining $g_k^{i, \gamma}$ for $$i\ge k(d-1)-\log(\gamma\alpha^{-1})+\log(\log(\log(\alpha^{-1}))),$$ there is a ``heavy" rectangle containing $\omega_l$ such that $\supp(\sigma_{k, j, n}\ast \chi_R)$ is contained in the exceptional set $\bigcup_{q, k, \gamma}A_{q, k, \gamma}$, and this clearly implies that $\text{supp}(\sigma_{k, j, n}\ast (f_q^{\gamma}\chi_{\omega_l}))$ is contained in the exceptional set. Since the support of $\sigma_{k, \omega_l}^{i, \gamma}$ is comprised of the union of supports of such $\sigma_{k, j, n}$, it follows that $\sum_{i\ge k(d-1)-\log(\gamma\alpha^{-1})+\log(\log(\log(\alpha^{-1})))}(g_k^{i, \gamma}-g_k^{i+1, \gamma})$ is supported in the exceptional set. We summarize this as the following remark.
\begin{remark}\label{penmark}
Note that $\sum_{i\ge k(d-1)-\log(\gamma\alpha^{-1})+\log(\log(\log(\alpha^{-1})))}(g_k^{i, \gamma}-g_k^{i+1, \gamma})$ is supported in the exceptional set $\bigcup_{q, k, \gamma}A_{q, k, \gamma}$.
\end{remark}
\subsection*{Handling the intermediate terms via $L^1$ estimates}
If we temporarily ignore the first term $\sigma_k\ast f_q^{\gamma}-g_k^{m(k, q, \gamma), \gamma}$ (which we will deal with later using $L^2$ estimates) it then remains to consider 
\begin{align*}
\sum_{m(k, q, \gamma)\le i\le k(d-1)-\log(\gamma\alpha^{-1})+\log(\log(\log(\alpha^{-1})))}(g_k^{i, \gamma}-g_k^{i+1, \gamma}).
\end{align*}
We will in fact sum these terms in $L^1$ and prove the following estimate.
\begin{lemma}\label{1lem}
Let $\mathcal{K}_2$ be defined as in (\ref{k2range}). The $L^1$ norm of
\begin{align*}
\sum_{k\in\mathcal{K}_2}\bigg(\sum_{m(k, q, \gamma)\le i\le k(d-1)-\log(\gamma\alpha^{-1})+\log(\log(\log(\alpha^{-1})))}(g_k^{i, \gamma}-g_k^{i+1, \gamma})\bigg)
\end{align*}
is $\lesssim\Log^3(\alpha^{-1})\Norm{f_q^{\gamma}}_1$. 
\end{lemma}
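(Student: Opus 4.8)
The plan is to estimate the $L^1$ norm of each difference $g_k^{i,\gamma} - g_k^{i+1,\gamma}$ separately and then sum over the $O(\Log^3(\alpha^{-1}))$ many relevant indices. First I would fix $k \in \mathcal{K}_2$ and an intermediate height $2^i$ in the stated range, and observe that $g_k^{i,\gamma} - g_k^{i+1,\gamma} = \sum_{\omega_l}(\sigma_{k,\omega_l}^{i,\gamma} - \sigma_{k,\omega_l}^{i+1,\gamma})\ast(f_q^\gamma\chi_{\omega_l})$. The measure $\sigma_{k,\omega_l}^{i,\gamma} - \sigma_{k,\omega_l}^{i+1,\gamma}$ is supported on exactly those caps $\text{supp}(\sigma_{k,j,n})$ for which $\omega_l$ is contained in some $R\in\mathcal{R}_{j,n,n-1}$ at height $M = 2^i$ but no such $R$ at height $M = 2^{i+1}$; in other words, $\omega_l$ sits in a rectangle $R$ with $c_{n-1}2^i\gamma \le \int_R |f_q^\gamma| < c_{n-1}2^{i+1}\gamma$. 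Since $\|\sigma_{k,\omega_l}^{i,\gamma} - \sigma_{k,\omega_l}^{i+1,\gamma}\|_1 \lesssim 2^{k(d-1)}\Theta_{\omega_l}$ where $\Theta_{\omega_l}$ is the total angle subtended, Young's inequality gives $\|(\sigma_{k,\omega_l}^{i,\gamma}-\sigma_{k,\omega_l}^{i+1,\gamma})\ast(f_q^\gamma\chi_{\omega_l})\|_1 \lesssim 2^{k(d-1)}\Theta_{\omega_l}\|f_q^\gamma\chi_{\omega_l}\|_1$.

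Next I would sum over the $\delta$-grains $\omega_l$. The key point is a counting/overlap estimate: for fixed $(j,n)$, the cap $\text{supp}(\sigma_{k,j,n})$ contributes to $\sigma_{k,\omega_l}^{i,\gamma}-\sigma_{k,\omega_l}^{i+1,\gamma}$ only for grains $\omega_l$ lying in the heavy-at-level-$2^i$-but-not-$2^{i+1}$ rectangles of $\mathcal{R}_{j,n,n-1}$, and on each such rectangle $R$ one has $\int_R|f_q^\gamma| \approx c_{n-1}2^i\gamma$ from below and above. Combined with the angular width $c_{n-1}/c_n$ of each cap and the equal spacing of the $\approx (c_n/c_{n-1})^{d-1}$ directions $\Phi_{j,n}$, summing $\Theta_{\omega_l}\|f_q^\gamma\chi_{\omega_l}\|_1$ against these constraints should produce, for each scale index $n$, a bound of the form $\lesssim 2^{-k(d-1)} 2^{-i} \cdot (\text{something like } \gamma^{-1}) \cdot \|f_q^\gamma\|_1$ — the $2^{-i}$ gain coming precisely because only rectangles of mass $\approx c_{n-1}2^i\gamma$ participate, so there are $\lesssim 2^{-i}$ of the available mass budget's worth of them. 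I would then sum over the $N \lesssim \log\log(\alpha^{-1})$ scales $n$, over the heights $i$ in the stated window (which has length $\lesssim \log\log\log(\alpha^{-1}) + 100\Log^2(\alpha^{-1}) \lesssim \Log^2(\alpha^{-1})$... actually more carefully, the window $m(k,q,\gamma) \le i \le k(d-1)-\log(\gamma\alpha^{-1}) + \log\log\log(\alpha^{-1})$ has length $\lesssim \log\log\log(\alpha^{-1})$ once $k$ is fixed, but then one must still sum over $k \in \mathcal{K}_2$, of which there are $\lesssim \Log^2(\alpha^{-1})$), and finally over $k \in \mathcal{K}_2$. The geometric-in-$i$ decay $\sum_i 2^{-i} \lesssim 2^{-m(k,q,\gamma)} \approx \alpha\gamma^{-1}2^{-k(d-1)}(\log\log\log(\alpha^{-1}))^{100}$ should cancel the $2^{k(d-1)}\gamma^{-1}\cdot$(stuff) and leave, after summing over $k\in\mathcal{K}_2$ (a loss of $\lesssim \Log^2(\alpha^{-1})$) and the $N$ scales (a further loss of $\lesssim \log\log(\alpha^{-1})$), a total of $\lesssim \Log^3(\alpha^{-1})\|f_q^\gamma\|_1$.

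I expect the main obstacle to be the overlap bookkeeping in the grain-sum: one must carefully control how many distinct caps $\text{supp}(\sigma_{k,j,n})$ a single grain $\omega_l$ can "see" at a given intermediate height across all scales $n$ simultaneously (this is where $\Theta_{\omega_l}$ enters, and one needs $\Theta_{\omega_l}$ to be not much larger than a single cap's angular measure, or at least to sum favorably), and conversely how to partition the mass $\int |f_q^\gamma|$ among the participating rectangles at level $2^i$ without double-counting across different $n$. The inequality \eqref{width1}, $\int|f_q^\gamma| \approx \gamma\lambda(f_q^\gamma)$, together with the mass threshold \eqref{hea}/\eqref{rec} defining $\mathcal{R}_{j,n,n-1}$, is what makes the $2^{-i}$ decay appear, so the crux is to set up the summation so that this threshold is used sharply at \emph{each} intermediate height rather than wastefully. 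A secondary technical point is confirming that the window of $i$'s genuinely has the short length claimed, so that the extra factors beyond $\Log^2(\alpha^{-1})$ (from $|\mathcal{K}_2|$) amount to only one more logarithm; tracking the $\log\log\log$ and $\log\log$ factors against the three allowed logarithms is delicate but, given the structure, should close.
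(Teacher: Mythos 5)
There is a genuine gap here, in two places. First, your closing arithmetic does not work: a loss of $|\mathcal{K}_2|\lesssim\Log^2(\alpha^{-1})$ from the sum over $k$ together with a further loss of $N\lesssim\Log^2(\alpha^{-1})$ from the sum over the scales $n$ multiplies to $(\Log^2(\alpha^{-1}))^2$, which vastly exceeds the target $\Log^3(\alpha^{-1})$; these two losses do not combine into ``one more logarithm.'' Second, the $2^{-i}$ gain you rely on to make the $i$-sum geometric is not justified. The threshold $c_{n-1}2^{i}\gamma\le\int_R|f_q^\gamma|<c_{n-1}2^{i+1}\gamma$ bounds the \emph{number} of participating rectangles by $\lesssim\lambda(f_q^\gamma)/(c_{n-1}2^{i})$, but after summing over grains the $L^1$ norm of $g_k^{i,\gamma}-g_k^{i+1,\gamma}$ is controlled by $\sum_{(j,n)}(c_{n-1}/c_n)^{d-1}\sum_{R}\int_R|f_q^\gamma|$, i.e.\ by the total \emph{mass} captured by those rectangles, and that mass can be all of $\Norm{f_q^\gamma}_1$ concentrated at a single dyadic mass level $2^{i}$. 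So the honest per-$(k,i)$ bound is only $\lesssim N\Norm{f_q^\gamma}_1$ with no decay in $i$, and the triangle inequality over $k\in\mathcal{K}_2$ and over the window of $i$'s cannot close.

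The idea you are missing is the paper's reindexing: write $i=m(k,q,\gamma)+j$, so that $j$ ranges over only $O(\Log^3(\alpha^{-1}))$ values, and for each \emph{fixed} offset $j$ and fixed grain $\omega_l$ observe that the angular supports of the difference measures $\sigma_{k,\omega_l}^{m(k,q,\gamma)+j,\gamma}-\sigma_{k,\omega_l}^{m(k,q,\gamma)+j+1,\gamma}$ are pairwise disjoint as $k$ varies over $\mathcal{K}_2$: the threshold $2^{m(k,q,\gamma)+j}$ is strictly increasing in $k$ (since $d-1\ge 1$), so the families of caps seen by $\omega_l$ are nested in $k$, and the consecutive-height difference isolates precisely the caps that first appear at that threshold. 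This disjointness makes the whole $k$-sum cost $O(1)\cdot\Norm{f_q^\gamma\chi_{\omega_l}}_1$ per grain, because the disjoint caps tile at most the full sphere; the only loss is then the $O(\Log^3(\alpha^{-1}))$ count of offsets $j$, which is exactly the factor appearing in the lemma. Your approach treats $k$ and $i$ as independent summation variables and so never sees this cancellation.
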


\begin{proof}[Proof of Lemma \ref{1lem}]
It suffices to show that for any $j\ge 0$,
\begin{align*}
\sum_{k\in\mathcal{K}_2}(g_k^{m(k, q, \gamma)+j, \gamma}-g_k^{m(k, q, \gamma)+j+1, \gamma})
\end{align*}
has $L^1$ norm $\lesssim\Norm{f_q^{\gamma}}_1$. To see this, note that
\begin{multline*}
\sum_{k\in\mathcal{K}_2}(g_k^{m(k, q, \gamma)+j, \gamma}-g_k^{m(k, q, \gamma)+j+1, \gamma})=
\\
=\sum_{\delta-\text{cubes }\omega_l}\sum_{k\in\mathcal{K}_2}(\sigma_{k, \omega_l}^{m(k, q, \gamma)+j, \gamma}-\sigma_{k, \omega_l}^{m(k, q, \gamma)+j+1, \gamma})\ast f_q^{\gamma}\chi_{\omega_l}.
\end{multline*}
Reindexing the sum in $k$ with $k=i+\log(\gamma\alpha^{-1})/(d-1)$, we may bound this from above by
\begin{multline*}
\sum_{\delta\text{-cubes }\omega_l}
\sum_{i\ge 0}(\sigma_{\log(\gamma\alpha^{-1})/(d-1)+i;\, \omega_l}^{m(\log(\gamma\alpha^{-1})/(d-1)+i, q, \gamma)+j;\, \gamma}
\\
-\sigma_{\log(\gamma\alpha^{-1})/(d-1)+i;\,\omega_l}^{m(\log(\gamma\alpha^{-1})/(d-1)+i, q, \gamma)+j+1;\, \gamma})\ast{(f_q^{\gamma}\chi_{\omega_l})}.
\end{multline*}
Now note that for any fixed $\omega_l$, the angles subtended by the spherical caps supporting each term
\begin{align*}
\sigma_{\log(\gamma\alpha^{-1})/(d-1)+i;\, \omega_l}^{m(\log(\gamma\alpha^{-1})/(d-1)+i, q, \gamma)+j;\, \gamma}
-\sigma_{\log(\gamma\alpha^{-1})/(d-1)+i;\,\omega_l}^{m(\log(\gamma\alpha^{-1})/(d-1)+i, q, \gamma)+j+1;\, \gamma})
\end{align*}
in the $i$-sum are disjoint. Indeed, this follows directly from the definition of these measures, the telescoping nature of the decomposition, and the fact that $(d-1)\ge 1$ ensures that for different values of $k$, the difference of these measures live at different ``heights," and the differences of measures at consecutive heights isolates the height at which a certain angular piece first occurs. This disjointness implies that
\begin{align*}
\sum_{i\ge 0}\Norm{\bigg(\sigma_{\log(\gamma\alpha^{-1})/(d-1)+i;\, \omega_l}^{m(\log(\gamma\alpha^{-1})/(d-1)+i, q, \gamma)+j;\, \gamma}-\sigma_{\log(\gamma\alpha^{-1})/(d-1)+i;\,\omega_l}^{m(\log(\gamma\alpha^{-1})/(d-1)+i, q, \gamma)+j+1;\, \gamma}\bigg)\ast f_q^{\gamma}\chi_{\omega_l}}_1
\\
\lesssim \Norm{f_q^{\gamma}\chi_{\omega_l}}_1.
\end{align*}
Summing over all $\delta$-grains $\omega_l$ completes the proof of the lemma.
\end{proof}

\subsection*{Estimating the $L^2$ norm of the light term}
By Remark \ref{penmark} and Lemma \ref{1lem}, to prove Proposition \ref{mainprop3}, it suffices to prove
\begin{proposition}\label{mainprop4}
\begin{align}\label{rand}
|\{x:\bigg(\sum_q\sum_{k>l(q)}|\sum_{\gamma>\gamma_0:\,\log(\alpha^{-1}\gamma)\le k(d-1)\le \log(\alpha^{-1}\gamma)+100\log(\log(\alpha^{-1}))}\sigma_k\ast{f_q^{\gamma}}
\\
-g_k^{m(k, q, \gamma), \gamma}|^2\bigg)^{1/2}>\alpha\}|\lesssim\alpha^{-1}\Norm{f}_1.
\end{align}
\end{proposition}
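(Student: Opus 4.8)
The goal is an $L^{1,\infty}$-type bound for the square function built from the ``light parts'' $\sigma_k\ast f_q^\gamma - g_k^{m(k,q,\gamma),\gamma}$, with only a constant loss (no logarithms). Since we have already discarded all the relevant exceptional sets, the plan is to prove this by a pure $L^2$ estimate: by Chebyshev it suffices to show
\[
\sum_q\sum_{k>l(q)}\Bigl\|\sum_{\gamma:\ \log(\alpha^{-1}\gamma)\le k(d-1)\le \log(\alpha^{-1}\gamma)+100\log\log(\alpha^{-1})}\bigl(\sigma_k\ast f_q^\gamma - g_k^{m(k,q,\gamma),\gamma}\bigr)\Bigr\|_2^2\lesssim \alpha\Norm{f}_1.
\]
For fixed $k$ there are only $O(\Log^2(\alpha^{-1}))$ admissible $\gamma$, so by Cauchy--Schwarz it is enough to prove, for each individual triple $(q,k,\gamma)$ in the range $\mathcal K_2$, the bound
\[
\Norm{\sigma_k\ast f_q^\gamma - g_k^{m(k,q,\gamma),\gamma}}_2^2\ \lesssim\ \gamma\,\alpha^{-1}(\Log^2(\alpha^{-1}))^{-2}\,2^{-k(d-1)}\,\Norm{f_q^\gamma}_1,
\]
after which summing over $\gamma$ (losing one factor $\Log^2$), over $k\in\mathcal K_2$ (losing another factor $\Log^2$ on the count, but gaining $(\Log^2)^{-2}$ from the estimate), and over $q$ gives exactly $\alpha\Norm{f}_1$ via \eqref{width1}. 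The point of subtracting $g_k^{m(k,q,\gamma),\gamma}$ is precisely that it removes the portion of $\sigma_k\ast f_q^\gamma$ living on the heavy caps, so the remaining kernel only sees rectangles of mass below the critical height $2^{m(k,q,\gamma)}\gamma c_{i-1}\sim \alpha^{-1}2^{-100\log\log\log(\alpha^{-1})}\cdots$, which is what produces the extra $(\Log^2)^{-2}$ gain.

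\textbf{Key steps.}
First I would record the cap decomposition of the kernel: write $\sigma_k\ast\sigma_k$ as a sum $\sum_{i=1}^N\sum_j$ of pieces essentially equal to $2^{-k(d-1)}(c_i/c_{i-1})^{-1}$ times the characteristic function of a $c_i\times\cdots\times c_i\times c_{i-1}$ hyperrectangle oriented in direction $\Phi_{j,i}$ (this is the ``hyperrectangular cap structure'' alluded to in the introduction and in Definition~\ref{defsets}), plus a harmless tail at scales below $c_1$ handled as in \eqref{2ess2}. Second, I would expand $\Norm{\sigma_k\ast f_q^\gamma-g_k^{m,\gamma}}_2^2=\langle f_q^\gamma-(\text{heavy part}),\ \sigma_k\ast\sigma_k\ast(\cdots)\rangle$ and, using that $\sigma_k\ast f_q^\gamma-g_k^{m,\gamma}$ is (by construction of the $g_k^{i,\gamma}$) supported only on $\delta$-grains lying in rectangles $R$ of mass $\int_R|f_q^\gamma|< c_{i-1}\cdot 2^{m(k,q,\gamma)}\gamma$ at every scale $c_i$, replace each cap-piece of the kernel against such a grain by the corresponding ``light'' mass bound. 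Third, for each scale $i$ I would bound the contribution by splitting $\mathbb R^d$ into the grid of $c_i$-rectangles in the $O((c_i/c_{i-1})^{d-1})$ directions $\Phi_{j,i}$, using $\int_R|f_q^\gamma|\lesssim \min(\gamma c_{i-1},\ 2^{m(k,q,\gamma)}\gamma c_{i-1})$ together with \eqref{width3}, and summing the resulting geometric-type series in $i$; the telescoping/disjointness structure used in Lemma~\ref{1lem} guarantees each grain is only charged once per scale. The product of the per-scale bound with the kernel normalization $2^{-k(d-1)}$ and the height cutoff $2^{m(k,q,\gamma)}\sim \gamma\alpha^{-1}2^{k(d-1)\cdot 0}(\Log^3)^{-100}$ — here I would track the exact exponents — delivers the claimed $(\Log^2(\alpha^{-1}))^{-2}$ improvement over the naive estimate \eqref{2ess2}, where the two saved factors come respectively from the $N\lesssim\log\log(\alpha^{-1})$ many scales being summable once we are below the critical height, and from the height cutoff itself lying a factor $\Log^3$ below $\log(\gamma\alpha^{-1})$ plus a further $\Log^2$ of room in $\mathcal K_2$.

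\textbf{Main obstacle.}
The hard part will be Step~2--3: making rigorous the claim that subtracting $g_k^{m(k,q,\gamma),\gamma}$ really does restrict each cap-piece of the kernel to act only against ``light'' rectangles, uniformly over the $O(\Log^2)$ scales $i$ and the $(c_i/c_{i-1})^{d-1}$ directions at once, and then summing the per-scale $L^2$ contributions without any logarithmic loss. Concretely one must check that the critical height $m(k,q,\gamma)$ is chosen so that for $i< m(k,q,\gamma)$ the rectangle-mass bound $\int_R|f_q^\gamma|< c_{i-1}M\gamma$ with $M=2^{m(k,q,\gamma)-i}\cdot(\text{something})$ beats, after multiplication by the kernel size $2^{-k(d-1)}c_{i-1}^{-1}$ and summation over the $c_i$-grid, the target $\gamma\alpha^{-1}(\Log^2)^{-2}2^{-k(d-1)}\Norm{f_q^\gamma}_1$ — i.e. that the geometric series in $i$ converges with the stated numerology. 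This is a bookkeeping-heavy but essentially mechanical computation once the kernel decomposition and the support restriction are in place; the only genuinely subtle point is ensuring the direction multiplicities $(c_i/c_{i-1})^{d-1}$ are absorbed by the corresponding shrinkage of the cap measures $\Norm{\sigma_{k,j,i}}_1\lesssim c_{i-1}/c_i$, exactly as in the proof of Lemma~\ref{sizelemma}.
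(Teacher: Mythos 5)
Your overall strategy coincides with the paper's: Chebyshev plus Cauchy--Schwarz over the $O(\Log^2(\alpha^{-1}))$ admissible $\gamma$ per $k$ reduces matters to a per-triple $L^2$ bound, which is then attacked by dominating $\sigma_k\ast\sigma_k$ by characteristic functions of hyperrectangles at the scales $c_i$ and using the subtraction of $g_k^{m(k,q,\gamma),\gamma}$ to restrict the kernel to light rectangles. However, the per-triple target you reduce to, $\Norm{\sigma_k\ast f_q^\gamma-g_k^{m(k,q,\gamma),\gamma}}_2^2\lesssim\gamma\,\alpha^{-1}(\Log^2(\alpha^{-1}))^{-2}2^{-k(d-1)}\Norm{f_q^\gamma}_1$, cannot be the right one. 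Since $\alpha^{-1}\gg\log(\alpha^{-1})$, that bound is already implied by the crude estimate \eqref{2ess2}, which requires no subtraction of $g_k^{m,\gamma}$ at all --- so if it sufficed, the proposition would be trivial. And it does not sum correctly: at the bottom of $\mathcal K_2$ one has $2^{k(d-1)}\approx\gamma\alpha^{-1}$, so your bound reads $(\Log^2(\alpha^{-1}))^{-2}\Norm{f_q^\gamma}_1$, and after your own accounting (one $\Log^2$ lost to Cauchy--Schwarz in $\gamma$, one to $|\mathcal K_2|$) the total is $\Norm{f}_1$, not $\alpha\Norm{f}_1$; Chebyshev then gives only $\alpha^{-2}\Norm{f}_1$. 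The correct target, which the paper proves, is $\alpha(\Log^2(\alpha^{-1}))^{-2}\Norm{f_q^\gamma}_1$; equivalently, drop the factor $\alpha^{-1}$, since $\gamma 2^{-k(d-1)}\le\alpha$ throughout $\mathcal K_2$.

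The more substantive gap is precisely the step you flag as the main obstacle and leave heuristic: that subtracting $g_k^{m(k,q,\gamma),\gamma}$ forces each hyperrectangle piece of the kernel to act only against mass $\lesssim c_{i-1}2^{m(k,q,\gamma)}\gamma$. This does not follow ``by construction of the $g_k^{i,\gamma}$'': the measures $\sigma_{k,\omega_l}^{m,\gamma}$ are defined via the heavy rectangles meeting $\omega_l$, whereas what is needed is a statement about the support of the kernel $\sigma_k\ast(\sigma_k-\sigma_{k,\omega_l}^{m,\gamma})$. The paper supplies this as Lemma \ref{2endlemma} through a genuinely geometric argument: if $\int_{\omega_l+R}|f_q^\gamma|\gtrsim c_{i-1}\gamma 2^{m(k,q,\gamma)}$ then the cap of angular width $\approx c_{i-1}/c_i$ normal to the short side of $R$ lies in $\mathrm{supp}(\sigma_{k,\omega_l}^{m,\gamma})$, and transversality of $R$ to the sphere forces $\sigma_k\ast(\sigma_k-\sigma_{k,\omega_l}^{m,\gamma})$ to vanish on $(R)_1=R\cap\{\tfrac1{10}c_i\le|x|\le10c_i\}$; one then iterates, peeling off sets $(R)_L$ until the residual mass drops below the threshold, producing the functions $h_R$ with $\int|h_R|\le c_{i-1}2^{m}\gamma$. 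Without that lemma the per-scale summation you describe has no input. With it, the computation does close as you expect: each scale $i$ contributes $2^{-k(d-1)}c_i^{-(d-1)}c_{i-1}^{d-2}\cdot(c_i/c_{i-1})^{d-1}\cdot c_{i-1}2^{m}\gamma=2^{m}\gamma 2^{-k(d-1)}=\alpha(\Log^2(\alpha^{-1}))^{-100}$ per unit of $\Norm{f_q^\gamma\chi_{\omega_l}}_1$, and summing over the $N\lesssim\Log^2(\alpha^{-1})$ scales yields the required $\alpha(\Log^2(\alpha^{-1}))^{-2}$.
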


\begin{remark}
Note that the right hand side of (\ref{rand}) is actually smaller by a factor of $\Log^3(\alpha^{-1})$ than what we actually need.
\end{remark}

\begin{proof}[Proof of Proposition \ref{mainprop4}]

Note that for each $k$, there are $\lesssim\log(\log(\alpha^{-1}))$ different values of $\gamma$ for which 
\begin{align*}
\log(\alpha^{-1}\gamma)\le k(d-1)\le\log(\alpha^{-1}\gamma)+100\log(\log(\alpha^{-1})).
\end{align*}
It follows that (\ref{rand}) reduces to showing that for a fixed $\gamma>\gamma_0$ and for $k\in\mathcal{K}_2$ we have
\begin{align*}
\Norm{\sigma_k\ast{f_q^{\gamma}}-g_k^{m(k, q, \gamma), \gamma}}_2^2\lesssim\alpha(\log(\log(\alpha^{-1})))^{-2}\Norm{f_q^{\gamma}}_1.
\end{align*}
The first step is to write
\begin{multline}\label{fstep}
\Norm{\sigma_k\ast f_q^{\gamma}-g_k^{m(k, q, \gamma), \gamma}}_2^2
=
\Norm{\sum_{\delta\text{-grains }\omega_l}(\sigma_k-\sigma_{k, \omega_l}^{m(k, q, \gamma), \gamma})\ast f_q^{\gamma}\chi_{\omega_l}}_{L^2}^2
\\
\lesssim\sum_{\delta\text{-grains }\omega_l}\left<(\sigma_k-\sigma_{k, \omega_l}^{m(k, q, \gamma), \gamma})\ast f_q^{\gamma}\chi_{\omega_l};\, \sigma_k\ast f_q^{\gamma}\right>
\\
=\sum_{\delta\text{-grains }\omega_l}\left<f_q^{\gamma}\chi_{\omega_l};\, \sigma_k\ast(\sigma_k-\sigma_{k, \omega_l}^{m(k, q, \gamma), \gamma})\ast f_q^{\gamma}\right>.
\\
\end{multline}

\subsubsection*{Domination of the kernel $\sigma_k\ast\sigma_k$ by linear combinations of characteristic functions of hyperrectangles}
Recall that we have the pointwise estimate
\begin{align*}
\sigma_k\ast\sigma_k(x)\lesssim 2^{-k(d-1)}|x|^{-1}\chi_{|x|\le 2^{k+1}}.
\end{align*}
Thus the kernel $\sigma_k\ast\sigma_k$ can essentially be decomposed as follows. Fix $q$ and $\gamma$, and let $\{c_i\}_{i=0}^N$ be the enumeration of the scales described earlier in (\ref{scale}). For $1\le i\le N$, let $\mathcal{R}_i$ be a collection of $(c_i/c_{i-1})^{d-1}$ many hyperrectangles of dimensions $c_i\times c_i\times\cdots\times c_i\times c_{i-1}$ centered at the origin, with short sides pointing in equally spaced directions, where $\{c_i\}_{i=1}^N$ are the scales described earlier. We may dominate\begin{multline}\label{domdec}
2^{-k(d-1)}|x|^{-1}\chi_{\gamma^{1/(d-1)}\le |x|\le \frac{1}{100}\max(l(q), (\gamma\alpha^{-1})^{1/(d-1)})}
\\
\lesssim\sum_{i=1}^N2^{-k(d-1)}c_i^{-(d-1)}c_{i-1}^{d-2}\sum_{R\in{\mathcal{R}_i}}\chi_{R}.
\end{multline}
Indeed, for each $i$, $c_i^{-(d-1)}c_{i-1}^{d-2}\sum_{r\in\mathcal{R}_i}\chi_R$ is essentially of size $c_i^{-1}$ for $|x|\approx c_i$, since for $|x|\approx c_i$ the hyperrectangles are essentially disjoint in the case that $d=2$, and for general $d$ there are 
$$\approx c_i^{-d}\times (c_i/c_{i-1})^{d-1}\times c_i^{d-1}c_{i-1}\approx (c_i/c_{i-1})^{d-2}$$ 
many hyperrectangles that intersect a given $x$. By similar reasoning, one sees that for $c_i\le |x|\le c_i$, we also have  $c_i^{-(d-1)}c_{i-1}^{d-2}\sum_{r\in\mathcal{R}_i}\chi_R$ is essentially of size $|x|^{-1}$.

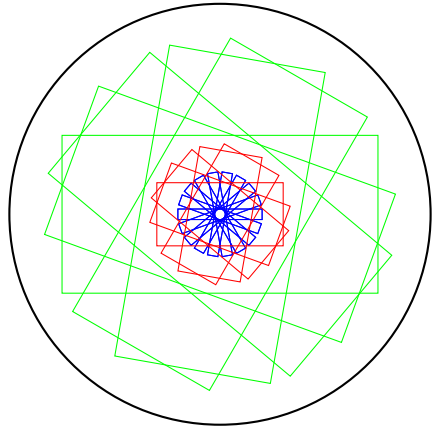
\begin{figure}
\begin{tikzpicture}[scale=0.7]

\draw[thick] (0,0) circle (4cm);

\draw[blue, rotate =20] (-0.8, -0.1) rectangle (0.8, 0.1);
\draw[blue, rotate =40] (-0.8, -0.1) rectangle (0.8, 0.1);
\draw[blue, rotate =60] (-0.8, -0.1) rectangle (0.8, 0.1);
\draw[blue, rotate =80] (-0.8, -0.1) rectangle (0.8, 0.1);
\draw[blue, rotate =100] (-0.8, -0.1) rectangle (0.8, 0.1);
\draw[blue, rotate =120] (-0.8, -0.1) rectangle (0.8, 0.1);
\draw[blue, rotate =140] (-0.8, -0.1) rectangle (0.8, 0.1);
\draw[blue, rotate =160] (-0.8, -0.1) rectangle (0.8, 0.1);
\draw[blue, rotate =180] (-0.8, -0.1) rectangle (0.8, 0.1);
\draw[blue, rotate =200] (-0.8, -0.1) rectangle (0.8, 0.1);
\draw[blue, rotate =220] (-0.8, -0.1) rectangle (0.8, 0.1);
\draw[blue, rotate =240] (-0.8, -0.1) rectangle (0.8, 0.1);
\draw[blue, rotate =260] (-0.8, -0.1) rectangle (0.8, 0.1);
\draw[blue, rotate =280] (-0.8, -0.1) rectangle (0.8, 0.1);
\draw[blue, rotate =300] (-0.8, -0.1) rectangle (0.8, 0.1);
\draw[blue, rotate =320] (-0.8, -0.1) rectangle (0.8, 0.1);
\draw[blue, rotate =340] (-0.8, -0.1) rectangle (0.8, 0.1);
\draw[blue, rotate =360] (-0.8, -0.1) rectangle (0.8, 0.1);

\draw[red, rotate =80] (-1.2, -0.6) rectangle (1.2, 0.6);
\draw[red, rotate =160] (-1.2, -0.6) rectangle (1.2, 0.6);
\draw[red, rotate =240] (-1.2, -0.6) rectangle (1.2, 0.6);
\draw[red, rotate =320] (-1.2, -0.6) rectangle (1.2, 0.6);
\draw[red, rotate =360] (-1.2, -0.6) rectangle (1.2, 0.6);

\draw[green, rotate =0] (-3, -1.5) rectangle (3, 1.5);
\draw[green, rotate =80] (-3, -1.5) rectangle (3, 1.5);
\draw[green, rotate =160] (-3, -1.5) rectangle (3, 1.5);
\draw[green, rotate =240] (-3, -1.5) rectangle (3, 1.5);
\draw[green, rotate =320] (-3, -1.5) rectangle (3, 1.5);

\end{tikzpicture}
\caption{Domination of the kernel $\sigma_k\ast\sigma_k$ in the sphere of radius $\frac{1}{100}\max(l(q), (\gamma\alpha^{-1})^{1/(d-1))}$ centered at the origin.}\label{fig2}
\end{figure}

\subsubsection*{Eliminating bad hyperrectangles}
Now fix some $i$ with $1\le i\le N$, and suppose that $R$ is a hyperrectangle in $\mathcal{R}_i$ such that 
\begin{align*}
\int_{\omega_l+R}|f_q^{\gamma}|\gtrsim c_{i-1}\gamma 2^{m(k, q, \gamma)}.
\end{align*}
Then by definition, the support of $\sigma_{k, \omega_l}^{m(k, q, \gamma), \gamma}$ contains a spherical cap of angular width $50c_{i-1}/c_i$ with some normal parallel to the short side of $R$. This implies that $\sigma_k\ast(\sigma_k-\sigma_{k, \omega_l}^{m(k, q, \gamma), \gamma})$ is supported outside of the set $(R)_1$, where we define $(R)_1$ to be $R\cap\{x:\,\frac{1}{10}c_i\le |x|\le 10 c_i\}$. 
\newline
\indent
Indeed, for any $x\in\mathbb{R}^d$ in the support of $\sigma_k\ast(\sigma_k-\sigma_{k, \omega_l}^{m(k, q, \gamma), \gamma})$ with $\frac{1}{10}c_i\le |x|\le 10 c_i$, we require there to exist $y$ on the sphere of radius $2^k$ centered at the origin such that $x-y$ is also on the sphere of radius $2^k$ centered at the origin, but outside the cap  of angular width $50c_{i-1}/c_i$ with some normal parallel to the short side of $R$. Suppose toward a contradiction that $x\in R\cap\{z:\,\frac{1}{10}c_i\le |z|\le 10 c_i\}$. But for any such $x-y$, we have that $(x-y)+(R)_1$ lies outside the sphere of radius $2^k$, since $R$ will be transverse to the boundary of the sphere at $x-y$ (see Figure $3$). Thus we have verified our claim that  $\sigma_k\ast(\sigma_k-\sigma_{k, \omega_l}^{m(k, q, \gamma), \gamma})$ is supported outside of the set $(R)_1$.

\begin{figure}
\begin{tikzpicture}[scale=0.7]

\draw[thick] (3,0) arc (25:325:4cm);
\draw[dashed] (-0.5,-1.8) -- (3,0);
\draw[dashed] (-0.5,-1.8) -- (-3.6,.4);

\draw[dashed] (-0.5,-1.8) -- (-4,-3.6);
\draw[dashed] (-0.5,-1.8) -- (2.6,-4);
\draw (.5,-2.5) arc (320:395:1cm) node[anchor=north west] {\small \,\,$50c_{i-1}/c_{i}$};
\filldraw (0, 2.3) circle (2pt) node[anchor=north east] {\small \,$x-y$};
\filldraw[cyan] (-.1, .6) rectangle (.2, 1.8);
\filldraw[cyan] (-.1, 2.8) rectangle (.2, 4);

\end{tikzpicture}
\caption{The two blue rectangles represent the set $x-y+(R)_1$.}\label{fig3}
\end{figure}
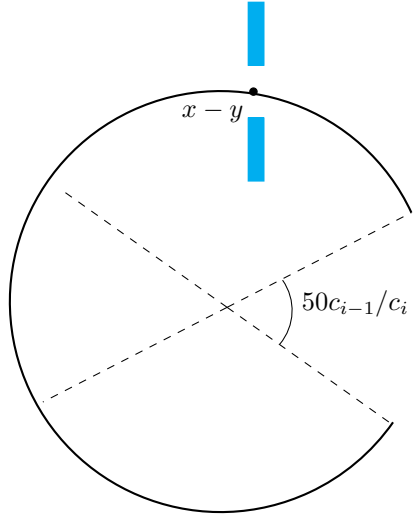
Repeating this process, if 
\begin{align*}
\int_{\omega_l+(R\setminus(R)_1)}|f_q^{\gamma}|\gtrsim c_{i-1}\gamma2^{m(k, q, \gamma)},
\end{align*}
then by definition, the support of $\sigma_{k, \omega_l}^{m(k, q, \gamma), \gamma}$ contains a spherical cap of angular width $50\cdot 2c_{i-1}/c_i$ with some normal parallel to the short side of $R$. As before, this implies that $\sigma_k\ast(\sigma_k-\sigma_{k, \omega_l}^{m(k, q, \gamma), \gamma})$ is supported outside of the set $(R)_2$, where we define $(R)_2$ to be $R\cap\{x:\,\frac{1}{20}c_i\le |x|\le 10c_i\}$. Repeating again, if
\begin{align*}
\int_{\omega_l+(R\setminus (R)_2)}|f_q^{\gamma}|\gtrsim c_{i-1}\gamma2^{m(k, q, \gamma)},
\end{align*}
then $\sigma_{k, \omega_l}^{m(k, q, \gamma), \gamma}$ contains a spherical cap of angular width $50\cdot 4c_{i-1}/c_i$ with some tangent parallel to the long side of $R$. This implies that $\sigma_k\ast(\sigma_k-\sigma_{k, \omega_l}^{m(k, q, \gamma), \gamma})$ is supported outside of the set $(R)_3$, where we define $(R)_3$ to be the set $R\cap\{x:\,\frac{1}{40}c_i\le |x|\le 10c_i\}$. We continue this process until stage $L$ when 
\begin{align*}
\int_{(\omega_l+(R\setminus (R)_L)}|f_q^{\gamma}|\lesssim c_{i-1}\gamma2^{m(k, q, \gamma)},
\end{align*}
and 
$\sigma_k\ast(\sigma_k-\sigma_{k, \omega_l}^{m(k, q, \gamma), \gamma})$ is supported outside of the set $(R)_L$. (Note that this must eventually happen if $\sigma_k-\sigma_{k, \omega_l}^{m(k, q, \gamma), \gamma}$ is not identically $0$, since the set $(R)_L$ can potentially increase by continuing this process up to $R\cap\{x: 10c_{i-1}\le |x|\le 10c_i\}$, which would imply that $\sigma_k-\sigma_{k, \omega_l}^{m(k, q, \gamma), \gamma}$ is identically $0$.) 
\newline
\indent
For convenience, we summarize the above in the following lemma.
\begin{lemma}\label{2endlemma}
Fix a $\delta$-grain $\omega_l$. For any hyperrectangle $R\in\mathcal{R}_i$, there is a subset $(R)_L\subset R$ such that
\begin{align*}
\int_{\omega_l+(R\setminus (R)_L)}|f_q^{\gamma}|\lesssim c_{i-1}\gamma 2^{m(k, q, \gamma)}
\end{align*} 
and $\sigma_k\ast(\sigma_k-\sigma_{k, \omega_l}^{m(k, q, \gamma), \gamma})$ is supported outside of the set $(R)_L$.
\end{lemma}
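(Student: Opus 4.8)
The statement merely repackages the iterative elimination procedure carried out in the paragraphs just before it, so the plan is to organize that procedure as a clean stopping-time argument. First I would fix the $\delta$-grain $\omega_l$, the hyperrectangle $R \in \mathcal{R}_i$, and all the parameters of Remark~\ref{remarkassume}, and introduce the increasing chain of subsets $(R)_0 := \emptyset$ and, for $j \ge 1$,
\[
(R)_j := R \cap \bigl\{x : 10^{-1} 2^{-(j-1)} c_i \le |x| \le 10 c_i \bigr\},
\]
together with the stopping index $L$, defined to be the first $j \ge 0$ for which $\int_{\omega_l + (R \setminus (R)_j)} |f_q^{\gamma}| \lesssim c_{i-1}\gamma 2^{m(k, q, \gamma)}$. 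Such an $L$ exists and satisfies $L = O(\log(c_i/c_{i-1}))$: the radii $10^{-1} 2^{-(j-1)} c_i$ decrease geometrically, so after that many steps $(R)_j$ would exhaust $R \cap \{x : 10 c_{i-1} \le |x| \le 10 c_i\}$, and if the stopping rule had not yet triggered the support of $\sigma_{k, \omega_l}^{m(k, q, \gamma), \gamma}$ would already contain the entire cap associated to the short side of $R$, so that $\sigma_k - \sigma_{k, \omega_l}^{m(k, q, \gamma), \gamma}$ vanishes on the relevant part of the sphere and the support assertion is trivially true.

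The substance of the argument is the inductive step. For each $j < L$ we have $\int_{\omega_l + (R \setminus (R)_j)} |f_q^{\gamma}| \gtrsim c_{i-1}\gamma 2^{m(k, q, \gamma)}$, so by the very definition of $\sigma_{k, \omega_l}^{m(k, q, \gamma), \gamma}$ its support contains a spherical cap of angular width $\approx 2^{j} c_{i-1}/c_i$ whose normal is parallel to the short side of $R$ (and, once that width becomes large, a cap whose tangent space contains the long side of $R$, exactly as in the discussion). One then applies the transversality observation illustrated in Figure~\ref{fig3}: were some $x \in (R)_{j+1}$ to lie in the support of $\sigma_k \ast (\sigma_k - \sigma_{k, \omega_l}^{m(k, q, \gamma), \gamma})$, there would be a point $y$ on the sphere of radius $2^k$ with $x - y$ also on that sphere but lying outside the cap just produced; at such a point the sphere is transverse to the translate $(x - y) + R$, so $(x - y) + (R)_{j+1}$ would poke outside the ball of radius $2^k$, contradicting that $x$ lies in the support while $x - y$ lies on the sphere. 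Hence $\sigma_k \ast (\sigma_k - \sigma_{k, \omega_l}^{m(k, q, \gamma), \gamma})$ is supported off $(R)_{j+1}$, and since the chain is increasing, off every earlier $(R)_{j'}$ as well. Iterating from $j = 0$ to $j = L - 1$ gives that $\sigma_k \ast (\sigma_k - \sigma_{k, \omega_l}^{m(k, q, \gamma), \gamma})$ is supported outside $(R)_L$, while the defining property of $L$ gives $\int_{\omega_l + (R \setminus (R)_L)} |f_q^{\gamma}| \lesssim c_{i-1}\gamma 2^{m(k, q, \gamma)}$; these are exactly the two assertions of the lemma.

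The step I expect to be the main obstacle is making the transversality claim quantitatively airtight: one must verify that with the annular radius $10^{-1} 2^{-(j-1)} c_i$ cutting out $(R)_j$ and the cap width $\approx 2^{j} c_{i-1}/c_i$ produced at step $j$, the hyperrectangle $R$ --- short side $c_{i-1}$, long sides $c_i$ --- genuinely meets the sphere of radius $2^k$ transversally at every point outside that cap, uniformly over all admissible $k \in \mathcal{K}_2$, and in particular that the switch from ``normal along the short side'' to ``tangent along the long side'' at the later stages is recorded with the right constants. Everything else is bookkeeping for a telescoping construction and should be routine.
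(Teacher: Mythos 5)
Your proposal is correct and is essentially the paper's own argument: the same increasing chain of annular truncations $(R)_j$ with inner radius halving at each step, the same stopping rule for $L$, the same appeal to the definition of $\sigma_{k,\omega_l}^{m(k,q,\gamma),\gamma}$ to produce a cap of doubling angular width, and the same transversality observation (Figure 3) to push the support of $\sigma_k\ast(\sigma_k-\sigma_{k,\omega_l}^{m(k,q,\gamma),\gamma})$ off $(R)_{j+1}$, with termination justified exactly as in the paper's parenthetical remark. The paper's proof is just this iteration written out informally in the paragraphs preceding the lemma, so the only difference is that you have packaged it as a formal stopping-time induction.
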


\subsubsection*{Finishing up the proof}
Lemma \ref{2endlemma} implies that for each $\delta$-grain $\omega_l$ and each hyperrectangle $R\in\mathcal{R}_i$, there is a function $h_R$ with $\int|h_R|\le c_{i-1}2^{m(k, q, \gamma)}\gamma$ so that by (\ref{fstep}) we may dominate
\begin{multline}\label{end1}
\Norm{\sigma_k\ast f_q^{\gamma}-g_k^{m(k, q, \gamma), \gamma}}_2^2=\sum_{{\delta}-\text{grains }\omega_l}\left<f_q^{\gamma}\chi_{\omega_l};\,\sigma_k\ast (\sigma_k-\sigma_{k, \omega_l}^{m(k, q, \gamma), \gamma})\ast f_q^{\gamma}\right>.
\\
\le
\sum_{{\delta}-\text{grains }\omega_l}\left<{f_q^{\gamma}\chi_{\omega_l}}, 2^{-k(d-1)}|x|^{-1}\chi_{(\gamma^{1/(d-1)}\le |x|\le\frac{1}{100}\max(l(q), (\gamma\alpha^{-1}))^{1/(d-1)})^c}\ast{f_q^{\gamma}}\right>
\\
+\sum_{{\delta}-\text{grains }\omega_l}\sum_{i=1}^N2^{-k(d-1)}c_i^{-(d-1)}c_{i-1}^{(d-2)}\sum_{R\in\mathcal{R}_i}\left<{f_q^{\gamma}\chi_{\omega_l}}, \chi_R\ast{h_R}\right>.
\end{multline}
It is not difficult to show that
\begin{multline*}
\left<{f_q^{\gamma}\chi_{\omega_l}}, 2^{-k(d-1)}|x|^{-1}\chi_{(\gamma^{1/(d-1)}\le |x|\le\frac{1}{100}\max(l(q), (\gamma\alpha^{-1})^{1/(d-1)})^c}\ast{f_q^{\gamma}}\right>
\\
\lesssim\alpha\Norm{f_q^{\gamma}\chi_{\omega_l}}_1.
\end{multline*}
Indeed, we have
\begin{multline}\label{end2}
\left<{f_q^{\gamma}\chi_{\omega_l}}, 2^{-k(d-1)}|x|^{-1}\chi_{(\gamma^{1/(d-1)}\le |x|\le\frac{1}{100}\max(l(q), (\gamma\alpha^{-1})^{1/(d-1)})^c}\ast{f_q^{\gamma}}\right>
\\
\lesssim 2^{-k(d-1)}\sum_{l:\,2^l\le \gamma^{1/(d-1)}}\int \int_{|x-y|\approx 2^l}\frac{1}{|x-y|}f_q^{\gamma}(y)\,dy\,f_q^{\gamma}(x)\chi_{\omega_l}(x)\,dx
\\
\sum_{l:\,\gamma^{1/(d-1)}<\frac{1}{100}\max(l(q), (\gamma\alpha^{-1})^{1/(d-1)})\le 2^l\le100\max(l(q), (\gamma\alpha^{-1})^{1/(d-1)})}\int \int_{|x-y|\approx 2^l}\frac{1}{|x-y|}
\\
\times f_q^{\gamma}(y)\,dy\,f_q^{\gamma}(x)\chi_{\omega_l}(x)\,dx\bigg)
\end{multline}
Using the fact that $|f_q^{\gamma}|\le 1$ for the first term and using (\ref{width3}) for the second term, we may bound this by
\begin{multline}
\left<{f_q^{\gamma}\chi_{\omega_l}}, 2^{-k(d-1)}|x|^{-1}\chi_{(\gamma^{1/(d-1)}\le |x|\le\frac{1}{100}\max(l(q), (\gamma\alpha^{-1})^{1/(d-1)})^c}\ast{f_q^{\gamma}}\right>
\\
\lesssim 2^{-k(d-1)}\Norm{f_q^{\gamma}\chi_{\omega_l}}_1\bigg(\sum_{l: 2^l\le \gamma^{1/(d-1)}}2^{l(d-1)}
\\
+\sum_{l: \gamma^{1/(d-1)}<\frac{1}{100}\max(l(q), (\gamma\alpha^{-1})^{1/(d-1)})\le 2^l\le100\max(l(q), (\gamma\alpha^{-1})^{1/(d-1)})}\gamma\bigg)
\\
\lesssim
 2^{-k(d-1)}\Norm{f_q^{\gamma}\chi_{\omega_l}}_1\gamma\lesssim (\gamma^{-1}\alpha)\Norm{f_q^{\gamma}\chi_{\omega_l}}_1\gamma\lesssim\alpha\Norm{f_q^{\gamma}\chi_{\omega_l}}_1.
\end{multline}
This gives a satisfactory bound for the first term occurring in the right hand side of (\ref{end1}). To bound the second term, we observe that since $\int|h_R|\le c_{i-1}2^{m(k, q, \gamma)}\gamma$, we have 
\begin{align}\label{end3}
\left<f_q^{\gamma}\chi_{\omega_l}, \chi_R\ast h_R\right>\lesssim c_{i-1}2^{m(k, q, \gamma)}\gamma\Norm{f_q^{\gamma}\chi_{\omega_l}}_1.
\end{align}
Combining (\ref{end1}), (\ref{end2}), and (\ref{end3}) and summing over all $i$ and all $\delta$-cubes $\omega_l$, using the fact that the cardinality of $\mathcal{R}_i$ is $\lesssim (c_i/c_{i-1})^{d-1}$ and $N\lesssim\log(\log(\alpha^{-1}))$, and recalling that 
\begin{align*}
m(k, q, \gamma):=k(d-1)-\log(\gamma\alpha^{-1})-\lceil{100\log(\log(\log(\alpha^{-1})))}\rceil,
\end{align*} 
we obtain
\begin{align*}
\Norm{\sigma_k\ast{f_q^{\gamma}}-f_{m(k, q, \gamma), k, \gamma}}_2^2\lesssim\alpha(\log(\log(\alpha^{-1})))^{-2}\Norm{f_q^{\gamma}}_1,
\end{align*}
which is the desired $L^2$ bound.
\end{proof}

\section{Proof of Proposition \ref{mainpropo2}}

We now give the proof of Proposition \ref{mainpropo2}, which relies on the simple observation that only $L^1$ and $L^2$ methods were used to prove Proposition \ref{midprop}. We reproduce the proposition here for convenience.
\begin{proposition}
For every $\epsilon>0$, there exists a constant $C(\epsilon)$ such that for all measurable $f$ and all $\alpha>0$ we have
\begin{multline}\label{mainineq}
|\{x\in\mathbb{R}^d:\,\mathcal{M}f(x)>\alpha\}|
\\
\le C(\epsilon)\alpha^{-1}\int|f(x)|\Log^3(|f(x)|/\alpha)(\Log^4(|f(x)|/\alpha))^{1+\epsilon}\,dx.
\end{multline}
\end{proposition}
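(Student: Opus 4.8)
The plan is to deduce Proposition \ref{mainpropo2} from Proposition \ref{midprop} by a standard layer-cake decomposition of $f$ according to the dyadic size of $\Log^3(|f|/\alpha)$, paying for the overlap of the layers with the extra $(\Log^4)^{1+\epsilon}$ factor. First I would reduce to $f\geq 0$ and, after the usual normalization, assume $\alpha$ is fixed; set $n(x):=\lfloor \Log^3(|f(x)|/\alpha)\rfloor$ and decompose $f=\sum_{m\geq 0} f_m$ where $f_m:=f\cdot\chi_{\{x:\ n(x)=m\}}$. On the support of each $f_m$ the quantity $\Log^3(|f|/\alpha)$ is essentially the constant $m$ (more precisely comparable to $2^m$ or $m$ depending on how one normalizes the floor — the point is only that it is essentially constant there), so Proposition \ref{midprop} applies to each $f_m$ individually.

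The main estimate is then a splitting at level $\alpha_m := \alpha\cdot \Log^4(\alpha^{-1})^{?}$ — more cleanly, I would bound
\begin{align*}
|\{\mathcal{M}f>\alpha\}| \leq \sum_{m\geq 0}\big|\{\mathcal{M}f_m > c\,\alpha\, a_m^{-1}\}\big|
\end{align*}
where $a_m>0$ are weights with $\sum_m a_m \leq 1$ chosen as $a_m \asymp (1+m)^{-(1+\epsilon)}$ (or $(m+2)^{-1}(\log(m+2))^{-(1+\epsilon)}$ for the borderline refinement), using subadditivity of $\mathcal{M}$ and $\{\mathcal{M}f>\alpha\}\subseteq\bigcup_m\{\mathcal{M}f_m>a_m\alpha\}$. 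Applying Proposition \ref{midprop} to $f_m$ at height $a_m\alpha$ gives
\begin{align*}
\big|\{\mathcal{M}f_m > a_m\alpha\}\big| \lesssim (a_m\alpha)^{-1}\int |f_m(x)|\,\Log^3\!\big(|f_m(x)|/(a_m\alpha)\big)\,dx.
\end{align*}
Since $a_m\geq \Log^4(\alpha^{-1})^{-O(1)}$ is polylogarithmically small (for the $m$ that matter, $2^m\lesssim \Log^3(\alpha^{-1})$, hence $m\lesssim \Log^4(\alpha^{-1})$), one has $\Log^3(|f_m|/(a_m\alpha))\lesssim \Log^3(|f_m|/\alpha)\lesssim 2^m$ on $\text{supp}(f_m)$, and $a_m^{-1}\lesssim (1+m)^{1+\epsilon}\lesssim (\Log^4(|f_m|/\alpha))^{1+\epsilon}$ there. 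Summing over $m$ and reassembling $\sum_m \int|f_m|(\cdots) = \int |f|\,\Log^3(|f|/\alpha)(\Log^4(|f|/\alpha))^{1+\epsilon}$ yields the claimed bound, with $C(\epsilon)=\sum_m a_m \cdot(\text{const}) <\infty$ because $\sum_m (1+m)^{-(1+\epsilon)}<\infty$.

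One point requires care and is where I expect the only real friction: verifying that Proposition \ref{midprop} is genuinely applicable to each $f_m$. Its hypothesis is that $\Log^3(|f|/\alpha)$ be \emph{essentially constant} on $\text{supp}(f)$ at the \emph{given} height. When we feed in $f_m$ at the shifted height $a_m\alpha$ rather than $\alpha$, we need $\Log^3(|f_m(x)|/(a_m\alpha))$ to be essentially constant on $\text{supp}(f_m)$; since $a_m$ is fixed (depending only on $m$) and $\Log^3(|f_m|/\alpha)\approx 2^m$ there, the shift changes $\Log^3$ by at most an additive $\Log(a_m^{-1})\lesssim \Log^5(\alpha^{-1})$, which is indeed absorbed into "essentially constant" once $2^m$ is of size $\Log^3(\alpha^{-1})$ — but the small-$m$ layers must be checked separately, and there one can simply note $\Log^3$ is bounded and the estimate reduces to the $L^2$-theory (equivalently, to the already-reduced Proposition \ref{mainprop}). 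Having dealt with that bookkeeping, the summation is routine, and the remark that the $L^2$ target in Proposition \ref{mainprop4} was stronger than needed by a factor $\Log^3(\alpha^{-1})$ provides the slack that makes the borderline weight $a_m\asymp (m+2)^{-1}(\log(m+2))^{-(1+\epsilon)}$ permissible.
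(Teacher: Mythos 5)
Your argument is correct, but it is not the route the paper takes, and the difference is worth recording. You treat Proposition \ref{midprop} as a black box weak-type inequality and superpose: decompose $f=\sum_m f_m$ over the dyadic level sets of $\Log^3(|f|/\alpha)$, use pointwise subadditivity of $\mathcal{M}$ to get $\{\mathcal{M}f>\alpha\}\subseteq\bigcup_m\{\mathcal{M}f_m>a_m\alpha\}$ with $\sum_m a_m\le 1$, $a_m\asymp(1+m)^{-(1+\epsilon)}$, and absorb the price $a_m^{-1}$ into $(\Log^4(|f_m|/\alpha))^{1+\epsilon}$ since $m\approx\Log^4(|f_m|/\alpha)$ on $\mathrm{supp}\,f_m$. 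The paper instead reopens the proof: it re-runs the Calder\'on--Zygmund reductions for the layered sum, inserts the weights $i^{1+\epsilon}$ by Cauchy--Schwarz \emph{inside} the square function $\bigl(\sum_k|\sum_i\sum_{q_i}b_{q_i}\ast\sigma_k|^2\bigr)^{1/2}$, and checks that each of the three internal estimates ($L^2$ for the light part, $L^1$ for the intermediate part, measure bounds for the exceptional sets) tolerates the extra factor $i^{1+\epsilon}$ --- this is why the authors stress that only $L^1$ and $L^2$ methods were used. Your version is more modular and elementary; its only delicate point, which you correctly flag and resolve, is that the essential-constancy hypothesis of Proposition \ref{midprop} must survive the threshold shift $\alpha\mapsto a_m\alpha$, which it does since $\Log^3(a_m^{-1})$ is $O_\epsilon(1)$ plus a triple-log of $m$, negligible against $2^m\approx\Log^3(|f_m|/\alpha)\gtrsim 1$. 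Two minor quibbles: the small-$m$ layers need no separate treatment (Proposition \ref{midprop} applies verbatim there), and the slack in your summation comes from $\sum_m(1+m)^{-(1+\epsilon)}<\infty$, not from the extra $\Log^3(\alpha^{-1})$ factor in Proposition \ref{mainprop4}; the borderline weight $a_m\asymp(m+2)^{-1}(\log(m+2))^{-(1+\epsilon)}$ would prove a slightly different (stronger) statement than the one claimed, so it is beside the point here.
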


\begin{proof}[Proof of Proposition \ref{mainpropo2}]
Without loss of generality assume that $f\le 1$. Clearly, we may also assume $\alpha\le f\le 1$ on the support of $f$. Thus on the support of $f$ there are at most $O(\Log^4(\alpha^{-1}))$ many dyadic level sets of $\Log^3(|f(x)|/\alpha)$, which we enumerate in decreasing order as $\{2^{-i}\}_{i=-100}^N$ for some $N=O(\Log^4(\alpha^{-1}))$. We write 
$$f=\sum_{i=1}^Nf_i,$$ 
where $f_i=f\chi_{x:\,\Log^3(|f(x)|/\alpha)\approx 2^i}$. 
\newline
\indent
Referring to the proof of Proposition \ref{midprop}, we now proceed with the reductions made in the beginning of the paper. Again, it suffices to prove the desired inequality with $f$ replaced by $\sum_if_i\chi_{\Omega_i^c}$, where $$\Omega_i:=\{x: M_{HL}(f_i)(x)>\alpha\}.$$ Let $\mathfrak{O}_i$ be the set of Whitney cubes at height $\alpha$ for $f_i$. As previously, by the $L^2$-boundedness of the lacunary spherical maximal operator and Cauchy-Schwarz we have
\begin{multline*}
\Norm{\sup_k|\sum_{i=1}^N\sum_{q\in\mathfrak{Q}_i}\Pi_q[f_{i, q}]\ast\sigma_k|}_2^2\lesssim\Norm{\sum_{i=1}^N\sum_{q\in\mathfrak{Q}_i}\Pi_q[f_{i, q}]}_2^2\lesssim_{\epsilon}\sum_{i=1}^Ni^{1+\epsilon}\Norm{\sum_{q\in\mathfrak{Q}_i}\Pi_q[f_{i, q}]}_2^2
\\
\lesssim_{\epsilon} \alpha\sum_{i=1}^Ni^{(1+\epsilon)}\Norm{\sum_{q\in\mathfrak{Q}_i}\Pi_q[f_{i, q}]}_1
\\
\lesssim_{\epsilon} \alpha\int|f(x)|(\Log^4(|f(x)|/\alpha))^{1+\epsilon}\,dx.
\end{multline*}
It thus remains to prove that
\begin{multline}\label{y1}
|\{x: \bigg(\sum_k|\sum_i\sum_{q_i\in\mathfrak{Q}_i}b_{q_i}\ast\sigma_k|^2\bigg)^{1/2}>\alpha\}|\lesssim_{\epsilon}\alpha^{-1}\int|f(x)|(\Log^3(|f(x)|/\alpha))
\\
\cdot(\Log^4(|f(x)|/\alpha))^{1+\epsilon}\,dx.
\end{multline}
By Cauchy-Schwarz, this further reduces to proving
\begin{multline}\label{y2}
|\{x: \bigg(\sum_k\sum_ii^{1+\epsilon}|\sum_{q_i\in\mathfrak{Q}_i}b_{q_i}\ast\sigma_k|^2\bigg)^{1/2}>\alpha\}|\lesssim_{\epsilon}
\\
\alpha^{-1}\int|f(x)|\Log^3(|f(x)|/\alpha)(\Log^4(|f(x)|/\alpha))^{1+\epsilon}\,dx.
\end{multline}

As previously, we may exploit the smoothness of the kernel $\sigma_k\ast\sigma_k$ and the cancellation of the atoms as in \cite{stw} to obtain
\begin{multline*}
\Norm{\bigg(\sum_{k}\sum_ii^{1+\epsilon}\sum_{q_i\ne q_i'}(b_{q_i}\ast\sigma_k)(\overline{b_{q_i'}\ast\sigma_k})\bigg)^{1/2}}_2^2
\\
\lesssim\sum_k\sum_i i^{1+\epsilon}\sum_{q_i\ne q_i'}\bigg|\left<b_{q_i}\ast\sigma_k, b_{q_i'}\ast\sigma_k\right>\bigg|
\\
\lesssim_{\epsilon}\alpha\int|f(x)|(\Log^4(|f(x)|/\alpha))^{1+\epsilon}\,dx.
\end{multline*}
We again throw away as exceptional sets the unions of the supports of $\sigma_k\ast q_i$ for $k$ such that $2^k\le l(q_i)$. We also have
\begin{multline*}
\sum_i\sum_{q_i}\sum_{k:\,2^k>l(q_i)}i^{(1+\epsilon)}\Norm{\Pi_{q_i}[f_{q_i}]\ast\sigma_k}_2^2\lesssim_{\epsilon}\sum_i i^{1+\epsilon}\sum_{q_i}\sum_{k:\,2^k>l(q_i)}\Norm{\alpha\chi_{q_i}\ast\sigma_k}_2^2
\\
\lesssim_{\epsilon}\alpha\sum_ii^{1+\epsilon}\Norm{f_i}_1
\lesssim_{\epsilon}\alpha\int|f(x)|(\Log^4(|f(x)|/\alpha))^{1+\epsilon}\,dx.
\end{multline*}
Thus (\ref{y2}) reduces to proving
\begin{multline}\label{y3}
|\{x: \bigg(\sum_i i^{1+\epsilon}\sum_{q_i\in\mathfrak{Q}_i}\sum_{k:\,2^k>l(q_i)}|\sigma_k\ast f_{q_i}|^2\bigg)^{1/2}>\alpha\}|
\\\lesssim_{\epsilon}\alpha^{-1}\int|f(x)|\Log^3(|f(x)|/\alpha)(\Log^4(|f(x)|/\alpha))^{1+\epsilon}\,dx.
\end{multline}
Now, we keep in mind that the remainder of the proof of Proposition \ref{mainpropo} uses only $L^1$ and $L^2$ methods. In fact, the proof shows that for each $i$, we can decompose
\begin{align*}
\sigma_{k\ast f_{q_i}}=g_{k, i, q_i, 1}+g_{k, i, q_i, 2}+g_{k, i, q_i, 3}, 
\end{align*}
where for each $i$ we have
\begin{align}\label{l2}
\sum_{q_i\in\mathfrak{Q}_i}\sum_{k:\,2^k>l(q_i)}\Norm{g_{k, i, q_i, 1}}_2^2\lesssim\alpha\int|f_i|\Log^3(|f_i(x)|/\alpha)\,dx,
\end{align}

\begin{align}\label{l1}
\sum_{q_i\in\mathfrak{Q}_i}\sum_{k:\,2^k>l(q_i)}\Norm{g_{k, i, q_i, 2}}_1\lesssim\int|f_i|\Log^3(|f_i(x)|/\alpha)\,dx,
\end{align}
and
\begin{align}\label{supp}
\bigcup_{i, k}\text{supp}(g_{k, i, q_i, 3})\lesssim\alpha^{-1}\int|f|.
\end{align}
To deal with (\ref{l2}), we multiply through by $i^{1+\epsilon}$ and sum in $i$ to obtain
\begin{multline}\label{e2}
\sum_i i^{1+\epsilon}\sum_{q_i\in\mathfrak{Q}_i}\sum_{k:\,2^k>l(q_i)}\Norm{g_{k, i, q_i, 1}}_2^2\lesssim_{\epsilon}\alpha\int|f_i|\Log^3(|f_i(x)|/\alpha)
\\
\cdot(\Log^4(|f_i(x)|/\alpha))^{1+\epsilon}\,dx.
\end{multline}
Summing (\ref{l1}) and (\ref{supp}) in $i$ and combining the resulting inequalities with (\ref{e2}) along with Chebyshev leads to the desired inequality (\ref{y3}).
 
\end{proof}

\end{document}